\newtheorem{thm}{Theorem}[section]
\newtheorem{cor}[thm]{Corollary}
\newtheorem{lem}[thm]{Lemma}
\newtheorem{prop}[thm]{Proposition}
\theoremstyle{definition}
\newtheorem{defn}[thm]{Definition}
\newtheorem{conj}{Conjecture} 
\newtheorem{ex}[thm]{Examples}
\newtheorem{example}[thm]{Example}
\theoremstyle{remark}
\newtheorem{rem}[thm]{Remark}
\numberwithin{equation}{section}
\newcommand{\Z}{\mathbb Z}
\newcommand{\C}{\mathbb C}
\newcommand{\p}{\mathfrak{p}}
\newcommand{\R}{\mathbb R}
\newcommand{\Pro}{\mathbb P}
\newcommand{\gr}{\mathrm{gr}}
\font \rus= wncyr10
\newcommand{\sha}{\, \hbox{\rus x} \,}
\newcommand{\Mot}{\mathcal{M}}
\newcommand{\Ho}{\mathcal{H}}
\newcommand{\MT}{\mathcal{MT}}
\newcommand{\GMT}{\mathcal{G}_{MT}}
\newcommand{\zetam}{\zeta^{ \mathfrak{m}}}
\newcommand{\Imot}{I^{\mathfrak{m}}}
\newcommand{\Q}{\mathbb Q}
\newcommand{\U}{\mathcal{U}}
\newcommand{\To}{\longrightarrow}
\newcommand{\A}{\mathbb{A}}
\newcommand{\G}{\mathbb{G}}
\newcommand{\tdots}{.\, .\,}
\newcommand{\tone}{\overset{\rightarrow}{1}\!}
\newcommand{\opi}{{}_0 \Pi_{1}}
\newcommand{\Or}{\mathcal{O}}
\newcommand{\g}{\mathfrak{g}}
\newcommand{\UMT}{\mathcal{U}_{MT}}
\newcommand{\circb}{\, \underline{\circ}\, }
\newcommand{\Amt}{\mathcal{A}^{\MT}}
\newcommand{\dch}{dch}
\newcommand{\Jmt}{J^{\MT} }
\newcommand{\dd}{\mathfrak{D}}
\newcommand{\e}{\mathsf{e}}
\newcommand{\II}{\mathbb{I}}
\newcommand{\godd}{\mathfrak{dg}^{odd}}
\newcommand{\gmzv}{\g^{\mathfrak{m}} }
\newcommand{\gd}{\mathfrak{dg}^{\mathfrak{m}}}
\newcommand{\gdsh}{\mathfrak{ls} }
\newcommand{\Ao}{\mathcal{A} }
\newcommand{\y}{\mathsf{y} }
\newcommand{\Ss}{\mathsf{S} }
\newcommand{\3}{\mathsf{3} }
\newcommand{\5}{\mathsf{5} }
\newcommand{\7}{\mathsf{7} }
\newcommand{\PP}{\mathfrak{p} }
\newcommand{\SL}{\mathrm{SL} }
\newcommand{\Lie}{\mathrm{Lie}}
\newcommand{\Phim}{\Phi^{\mathfrak{m}}}
\newcommand{\dD}{\mathbb{D}}
\newcommand{\Deltas}{\Delta_{\!\sha}}
\newcommand{\Deltaps}{\Delta'_{\!\sha}}
\newcommand{\stu}{*}
\newcommand{\DMD}{\mathfrak{dm}}
\newcommand{\Deltat}{\Delta_{\stu}}
\begin{document}
\author{Francis Brown}
\begin{title}[Depth-graded MZV's]{Depth-graded motivic multiple zeta values}\end{title}
\maketitle
\begin{abstract} 
We study the depth filtration on  multiple zeta values, the motivic Galois group of mixed Tate motives over $\Z$ and the Grothendieck-Teichm\"uller group,  and its relation to modular forms.
Using period polynomials for cusp forms for $\SL_2(\Z)$, we construct an explicit  Lie algebra  of solutions to the linearized double \mbox{shuffle} equations, which gives    a conjectural  description of  all identities  between multiple zeta values  modulo $\zeta(2)$ and modulo lower depth.  We formulate a  single conjecture about the homology of this Lie algebra  which  implies   conjectures due to   Broadhurst-Kreimer,  Racinet, Zagier and Drinfeld on the structure of multiple zeta values and on the Grothendieck-Teichm\"uller Lie algebra. 
\end{abstract}

\section{Introduction}

We begin by motivating the results of this paper from two apparently different, but in fact  equivalent,  perspectives. 
\subsection{Depth filtration on multiple zeta values}
Multiple zeta values  are defined for integers $n_1,\ldots, n_{r-1}\geq 1$ and $n_r\geq2$ by
$$\zeta(n_1,\ldots, n_r) = \sum_{1\leq k_1< \ldots < k_r}  {1 \over k_1^{n_1} \ldots k_r^{n_r} }\ .$$
Their weight is the quantity $n_1+\ldots +n_r$, and their depth is the number of indices $r$. Relations between multiple zeta values of depth two were first studied by Euler. 
Let $\mathcal{Z}_N$ denote the $\Q$-vector space spanned by multiple zeta values in weight $N$.  Zagier conjectured that the dimension of $\mathcal{Z}_N$ can be expressed using   the generating series
\begin{equation}  \label{introdimZnGF} \sum_{N\geq 0}  \dim_{\Q}  \, (\mathcal{Z}_N)  s^N =  {1 \over 1-s^2-s^3}\ . 
\end{equation} 
Using the theory of mixed Tate motives, Goncharov and Terasoma independently showed  that  $\dim_{\Q}  \mathcal{Z}_N$   is bounded above by the coefficient of $s^N$ in the right-hand side of \eqref{introdimZnGF}. Furthermore, if one  replaces $\mathcal{Z}_N$ with the $\Q$-vector space of motivic multiple zeta values $\zetam(n_1,\ldots, n_r)$ of weight $N$, then   equation \eqref{introdimZnGF} is a theorem 
\cite{BrMTZ}. The rational function  on the right-hand side of \eqref{introdimZnGF}  can be interpreted as follows: it is  the Poincar\'e series of the  free module, generated by  $\zetam(2n)$ for $n\geq 1$,  over the graded dual of the universal enveloping algebra of the Lie algebra of the category of mixed Tate motives over $\Z$, which is free with one generator in every odd degree $\leq -3$.

Based on numerical experiments, Broadhurst and Kreimer  formulated a fascinating and more refined  conjecture. Let $\mathcal{Z}_{N,d}$  denote the $\Q$-vector space spanned by multiple zeta values in weight $N$ and depth $d$. They propose  that
\begin{equation} \label{introBK} \sum_{N, d\geq 0} \dim_{\Q}\,   (\mathcal{Z}_{N,d})  s^N t^d = { 1  + \mathbb{E}(s) t \over 1- \mathbb{O}(s) t + \mathbb{S}(s) t^2 - \mathbb{S}(s) t^4}\ ,
\end{equation}
 where, using the notation from  (\cite{IKZ},  appendix),
\begin{equation} \label{EOSdef}
\mathbb{E}(s) = {s^2 \over 1-s^2}   \quad , \quad \mathbb{O}(s) = {s^3 \over 1-s^2}  \quad  , \quad \mathbb{S}(s) = {s^{12} \over (1-s^4)(1-s^6)}  \ .
\end{equation}
Formula \eqref{introBK} specialises to the statement \eqref{introdimZnGF}  upon setting $t=1$.

 The meaning of this conjecture is still mysterious, but one goal of this paper is to offer an interpretation   of \eqref{introBK}.
 The   series  $\mathbb{E}(s)$ and  $\mathbb{O}(s)$ are  the generating series  for the dimensions of the spaces of even and odd single zeta values respectively, and 
 $\mathbb{S}(s)$  is the  generating series for the dimensions of the space of  cusp forms  for the full modular group $\mathrm{SL}_2(\Z)$. The  first prediction of 
 \eqref{introBK}, due to the presence of a non-trivial coefficient of $t^2$ in the denominator of the right-hand side,  is  the existence of  an extra relation between double zeta values of even weight for  every cusp form, modulo multiple zeta values of lower depth (single zeta values).  
 These relations have indeed been shown to exist and are  well-understood by the work of Gangl, Kaneko and Zagier \cite{GKZ}, who exhibited an infinite family of  such relations.  
  The smallest one  corresponds to the Ramanujan cusp form  of weight 12:
  \begin{equation}\label{modintro}  28 \, \zeta (3,9)+  150 \, \zeta(5,7) + 168  \, \zeta(7,5) = \frac{5197}{691} \zeta(12) \ .
\end{equation} The coefficients in this and  all such  equations can  be related to period polynomials for cusp forms, or equivalently, to group cocycles for $\mathrm{SL}_2(\Z)$. Furthermore,  a geometric mechanism for these relations is by now 
fairly well understood \cite{MMV}. 

The situation in higher depths  remains very unclear. It is known by work of Zagier and Goncharov that \eqref{introBK} is true (in a suitable setting, i.e.,  for solutions to the double shuffle equations as discussed below) in  depths 2 and 3.  Nevertheless, the presence of the term in $t^4$ in the right-hand side of \eqref{introBK} suggests  a new phenomenon in depth four. If we interpret the right-hand side of \eqref{introBK} in terms of the Poincar\'e series of a  depth-graded version of the Lie algebra of the category of mixed Tate motives over $\Z$, the  term in $t^4$ suggests the existence of 
 \emph{new generators} in this Lie algebra in depth four, corresponding to cusp forms for the full modular group.

In this paper we  supply  candidates for these  `exceptional' generators by constructing them explicitly out of period polynomials of cusp forms. As a result,  we  can formulate a much more precise conjecture than \eqref{introBK} which predicts not only the dimensions of the spaces of multiple zeta values in all weights and depths, but also their   relations    (modulo $\zeta(2)$ and modulo  lower depths).  In order to get some sense of these exceptional generators,  consider the first one, which occurs   in  depth 4 and weight 12.
 It turns out in this case that  all multiple zeta values are proportional to  a single element $\zeta(1,1,2,8)$ modulo terms of lower depth and products, e.g., 
$$\zeta(4,3,3,2) \equiv 116\,  \zeta(1,1,2,8)\ .$$ 
The exceptional generator   corresponding to  the Ramanujan cusp form  $\Delta$ annihilates every such relation, and therefore gives an interpretation of the coefficients (in this case, the number 116) in terms of ratios of critical values of  the $L$-function of $\Delta$.

\subsection{The projective line minus three points} %
 The  motivic Galois group  $$\mathcal{G}^{dR}_{\MT(\Z)}=\mathrm{Aut}^{\otimes}_{\MT(\Z)}(\omega_{dR})$$ of the Tannakian category of mixed Tate motives over $\Z$ has a canonical representation  
 $$\mathcal{G}^{dR}_{\MT(\Z)} \To \mathrm{Aut} \left( \opi\right) $$
 where $ \opi=\pi^{\mathrm{dR}}_1(\Pro^1 \backslash \{0,1,\infty\}, \tone_0, -\tone_1)$ is the de Rham fundamental torsor of paths  
between  tangential basepoints at $0$ and $1$ \cite{D,DG}.     This action is the motivic version of the  outer action of the absolute Galois group $\mathrm{Gal}(\overline{\Q}/\Q)$ on the pro-$\ell$ completion of the fundamental group of $X$ which was first studied extensively by Deligne, Drinfeld, Ihara \cite{D,Drinfeld,YI}.
Deligne conjectured that this action is faithful, or equivalently, that the motivic  torsor of paths  on $X$ generates the category $\MT(\Z)$.  
This was proven in \cite{BrMTZ}. Since the  Lie algebra of $\opi$ is the free  graded Lie algebra on two generators $e_0,e_1$, 
 $$\mathfrak{g} = \mathbb{L}( e_0,e_1) $$
  this gives a very concrete way to study the motivic Galois group, or equivalently, its graded Lie algebra $\gmzv$  together with its representation: 
  $$\gmzv:= \mathrm{Lie}\, \mathcal{G}^{dR}_{\MT(\Z)} \To  \mathrm{Der} \, \mathfrak{g}\ .$$
Since the derivations in question are `special', and can  be identified with elements of $\mathfrak{g}$, one obtains from this   a canonical embedding
 $$\gmzv \quad  \subset \quad  \left( \mathfrak{g}  , \{ , \}\right)  $$ 
 where  $\mathfrak{g}$ is equipped  with a new Lie algebra structure, denoted by $\{\ , \  \}$ and called the Ihara bracket. 
 By abuse of notation, we shall identify $\gmzv$ with its image in $(\mathfrak{g}, \{ \ , \})$.   A major problem is to describe this Lie algebra as precisely as possible.

We know that it has  generators  called `zeta elements' 
 for every $n\geq 1$:
$$\sigma_{2n+1} = (-1)^n (\mathrm{ad} \, e_0)^{2n} \, e_1 +\hbox{terms of degree } \geq 2 \hbox{ in  } e_1$$
but they are not canonical\footnote{in fact, one can define canonical generators $\sigma_{2n+1}$ to be the coefficient of 
$\zetam(3,2,\ldots, 2)$, with $n-1$ twos, in a motivic Drinfeld associator $\Phim= \sum_{w} \zetam(w) w$. However, this definition is not explicit and most of the  coefficients of $\sigma_{2n+1}$ defined in this way are not known explicitly.}
 (for example, $\sigma_{11}$  is only well-defined up to addition of rational multiples of $\{\sigma_3, \{\sigma_5, \sigma_3\}\}$).  Deligne's conjecture in its originally stated form was that  the $\ell$-adic versions of the $\sigma_{2n+1}$, which are elements in a certain Lie algebra constructed out of $\mathrm{Gal}(\overline{\Q}/\Q)$,  form a free Lie algebra for the Ihara bracket.

\begin{thm}\label{DI} \cite{BrMTZ} The graded Lie algebra $\gmzv$ is a   free Lie algebra on (some choice of) generators $\sigma_{2n+1}$ in each degree $-(2n+1)$ for $n \geq 1$.
\end{thm}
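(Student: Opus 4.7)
The plan is to identify $\gmzv$ with the Lie algebra $\Lie(\UMT)$ of the pro-unipotent radical of the motivic Galois group $\GMT(\Z)\cong \G_m\ltimes\UMT$ for the category of mixed Tate motives over $\Z$, and then invoke the known freeness of the latter. The Ext-computations in this category---$\mathrm{Ext}^1(\Q,\Q(n))\cong K_{2n-1}(\Z)\otimes\Q$, which is one-dimensional for odd $n\geq 3$ and zero otherwise, together with the vanishing of $\mathrm{Ext}^2$---force, via the standard generators-and-relations dictionary, $\Lie(\UMT)$ to be the free graded Lie algebra on one generator in each degree $-(2i+1)$, $i\geq 1$. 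Consequently $\mathcal{O}(\GMT(\Z))\cong\mathcal{O}(\G_m)\otimes\mathcal{O}(\UMT)$ has Poincar\'e series $\frac{1}{1-s^2}\cdot\frac{1-s^2}{1-s^2-s^3}=\frac{1}{1-s^2-s^3}$.

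Next I would invoke the structure theorem of \cite{BrMTZ, D}: the coaction of $\mathcal{O}(\GMT(\Z))$ on $\Ho$ coming from the motivic iterated integrals on $\Pro^1\setminus\{0,1,\infty\}$, combined with the dimension bound of Theorem~\ref{introHoffman} (equivalently $(\ref{introHdim})$), upgrades to a canonical isomorphism $\Ho\cong\mathcal{O}(\GMT(\Z))$ of graded Hopf algebras. Under this identification $\zetam(2)$ corresponds to the generator of the $\mathcal{O}(\G_m)$-factor, so the quotient $\Lo=\Ho_{>0}/(\zetam(2),\,\text{products})$ becomes the indecomposable quotient $\mathcal{O}(\UMT)_{>0}/\mathcal{O}(\UMT)_{>0}^2$. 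For a connected graded commutative Hopf algebra, this space of indecomposables is the graded dual of the Lie algebra of primitives of the dual Hopf algebra, which here is precisely $\Lie(\UMT)$. Hence $\gmzv=\Lo^\vee\cong\Lie(\UMT)$ as graded vector spaces.

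Finally one must check that the Ihara bracket on $\gmzv\subset\U\g$ (inherited from the coaction on $\Phim$) agrees with the Lie bracket on $\Lie(\UMT)$: this is essentially by construction, since Ihara's formula is the infinitesimal description of how $\GMT(\Z)$ acts on the de Rham fundamental torsor of $\Pro^1\setminus\{0,1,\infty\}$. The resulting isomorphism is non-canonical because it depends on choosing lifts $\sigma_{2i+1}\in\gmzv$ of bases of $\mathrm{Ext}^1(\Q,\Q(2i+1))$. The hard part---and the source of the non-triviality of the theorem---is the identification $\Ho\cong\mathcal{O}(\GMT(\Z))$: Theorem~\ref{introHoffman} supplies only the upper bound on $\dim\Ho_N$, and matching this with a lower bound requires knowing that the motivic Galois action on iterated integrals is rich enough to exhaust all of $\mathcal{O}(\GMT(\Z))$.
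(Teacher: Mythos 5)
Your argument follows the paper's own route: the freeness of $\Lie^{gr}(\UMT)$ on one generator in each degree $-(2i+1)$ comes from Borel's computation of $K_\bullet(\Z)\otimes\Q$ together with the vanishing of $\mathrm{Ext}^2$ in $\MT(\Z)$, and the identification of $\gmzv$ with $\Lie^{gr}(\UMT)$ (equivalently, the injectivity of the map $\Lie^{gr}(\UMT)\to\g$ and the Hopf-algebra isomorphism $\Ho\cong\Amt\otimes_\Q\Q[\zetam(2)]$) is exactly what Theorem \ref{introHoffman}, i.e.\ the main theorem of \cite{BrMTZ}, provides. One small correction: Theorem \ref{introHoffman} asserts that the Hoffman elements form a \emph{basis}, so it already contains the lower bound (the faithfulness of the Galois action on $\opi$) that you describe as still needing to be supplied; only the inequality version of $(\ref{introHdim})$ is the ``soft'' upper bound coming from the structure of $\MT(\Z)$.
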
 
Only the classes    $[\sigma_{2n+1}]$ in the abelianisation $(\gmzv)^{\mathrm{ab}}= \gmzv/\{\gmzv,\gmzv\}$ are canonical. Since $H_1(\gmzv;\Q)= (\gmzv)^{\mathrm{ab}}$, one can rephrase the previous theorem by saying that
\begin{eqnarray}  \label{freecohom}
H_1(\gmzv; \Q ) & \cong  & \bigoplus_{n\geq 1} [\sigma_{2n+1}]  \Q \\
H_i(\gmzv; \Q)  & = & 0  \quad \hbox{ for } \quad  i \geq 2  \ .\nonumber 
\end{eqnarray}
The abstract structure of the Lie algebra $\gmzv$ is therefore very simple, but all the subtle information about mixed Tate motives  over $\Z$ is encoded in the coefficients of the generators $\sigma_{2n+1}$, which  are not known explicitly. In this paper we study the depth-graded version of this Lie algebra. We gain an explicit description of all the generators, but the structure of the depth graded Lie algebra, which is more complicated, is conjectural.

\subsection{Depth filtration on the motivic Lie algebra} As explained in \cite{DG}, 
the depth filtration is induced geometrically by the inclusion 
$ X \subset \G_m $ and is the decreasing filtration $\dd$  on $\mathfrak{g}$,  where $\dd^r$ consists of Lie brackets containing at least $r$ occurences of the letter $e_1$. It is preserved by the Ihara bracket. Thus we obtain a depth filtration 
$\dd^r \gmzv$ on the motivic Lie algebra, and can consider its associated  graded Lie algebra:
$$\gd = \gr_{\dd} \gmzv\ .$$ 
It is bigraded by weight and depth: the depth will be indicated by a subscript, thus $\gd_r = \gr^r_{\dd} \gmzv$.
In depth one, it inherits canonical `zeta' generators  
\begin{equation} \label{introsigmabar}
\overline{\sigma}_{2n+1} = (-1)^n   (\mathrm{ad} \, e_0)^{2n}  e_1 \quad   \in \quad  \gd_1\ .
\end{equation}
Ihara discovered, astonishingly, that in the depth-graded Lie algebra the generators $\overline{\sigma}_{2n+1}$ are not free. The first relation occurs in weight 12
\begin{equation} \label{IntroIharaRel} \{\overline{\sigma}_3, \overline{\sigma}_9\} - 3 \{\overline{\sigma}_5, \overline{\sigma}_7\}=0\ .
\end{equation} 
In order to reconcile this relation with the freeness theorem \ref{DI}, there must exist an extra generator in $\gd$ in weight 12 to compensate for it.  These exceptional generators are one of the main objects of study of this paper.

The  general quadratic relations between the $\overline{\sigma}_{2n+1}$ have been known explicitly for some time  \cite{IT, Sch, GG, GKZ}   and are re-derived  very easily using the formalism we introduce  below.  To describe them,  let $V_{n} = \bigoplus_{i+j=n} \Q X^i Y^j$ denote the vector space of homogeneous polynomials of degree $n$ in two variables, with its right action of $\mathrm{SL}_2(\Z)$. Evaluating cocycles  on the matrix $\left(\begin{smallmatrix} 0 & -1 \\ 1 & 0 \end{smallmatrix}\right)$ induces  an isomorphism 
$$ H^1_{\mathrm{cusp}} ( \SL_2(\Z), V_{2n-2})^+ \quad  \cong  \quad  \Ss_{2n} $$
where $+$ denotes invariants under real Frobenius, and 
 $\Ss_{2n} \subset \Q[X,Y]$  is the space of even period polynomials:  it is the space  of antisymmetric homogeneous polynomials $P(X,Y)$  of degree  $2n-2$,   divisible by $Y$,  satisfying $P(\pm X, \pm Y)=P(X,Y)$ and  
$$P(X, Y) + P(X-Y,X) + P (-Y, X-Y) =0 \ . $$  
One shows that the quadratic relations are completely described by period polynomials: 
 \begin{equation} \label{introsigmarel}
\sum_{i<j}  \lambda_{i,j}  \{ \overline{\sigma}_{2i+1},  \overline{\sigma}_{2j+1}\}   = 0  \qquad \Longleftrightarrow \qquad     \sum_{i,j} ( \lambda_{ij}-\lambda_{ji})  X^{2i} Y^{2j} \in \Ss_{2n}\ .
\end{equation}
The first goal of this paper is to provide a complete conjectural description  of $\gd$.

\subsection{Results}
\subsubsection{Missing generators}
Firstly, we construct the candidate exceptional generators in depth 4  using period polynomials for cusp forms. We define an explicit map 
$$\e : H_{\mathrm{cusp}}^1(\SL_2(\Z); V_{2n-2})^+  \To   \dd^4\g$$
which, to every  even period polynomial  associates a Lie word in two generators $e_0,e_1$, of degree $4$ in $e_1$. 
The simplest possible conjecture that one can make is that the depth-graded motivic Lie algebra is generated by the canonical generators $\overline{\sigma}_{2n+1}$ in depth 1, the image of the exceptional map $\e$ in depth 4 and subject only to the known quadratic relations between the $\overline{\sigma}_{2n+1}$ in depth 2.
This statement is equivalent to   the following reformulation of the Broadhurst-Kreimer conjecture.

\begin{conj} \label{introconjMBK}  The image of $\e$ lies in $\gd$, and \begin{eqnarray}
H_1(\gd;\Q) &\cong&  \bigoplus_{n\geq 1} \overline{\sigma}_{2n+1} \Q \oplus  \bigoplus_{n} \e(\Ss_{2n})    \label{introfinalBK} \\
H_2(\gd;\Q) &\cong&  \bigoplus_n \Ss_{2n} \nonumber \\
H_i(\gd;\Q) &= &  0  \quad \hbox{ for } \quad  i\geq 3 \ .  \nonumber 
\end{eqnarray}
\end{conj}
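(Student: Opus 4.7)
The plan is to split conjecture \ref{introconjMBK} into three tasks: the motivicity of $\e(\Ss_{2n})\subset \gd$, a presentation of $\gd$ by the indicated generators and relations, and a vanishing result $H_{\geq 3}(\gd;\Q)=0$. I expect the motivicity to be by far the main obstacle, consistent with the author's remark that it is open.

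Granting motivicity, one obtains a natural Lie algebra map $\phi\colon \mathcal{F}/\mathcal{R} \to \gd$, where $\mathcal{F}$ is the free graded Lie algebra on generators $\overline\sigma_{2i+1}$ (for $i\geq 1$, in bidegree $(2i+1,1)$) and $\e_P$ (for $P\in\Ss_{2n}$, in bidegree $(2n,4)$), and $\mathcal{R}$ is the Lie ideal generated by the Gangl--Kaneko--Zagier period-polynomial relations (\ref{introsigmarel}). The strategy is then a generating-function match: removing the $\zetam(2)$-contribution $1+\mathbb{E}(s)t$ from (\ref{introBK}) yields
\[ P_{\U\gd}(s,t) = \bigl(1 - \mathbb{O}(s)t + \mathbb{S}(s)t^2 - \mathbb{S}(s)t^4\bigr)^{-1}, \]
which is precisely the Koszul-type Hilbert series $\bigl(1 - (\mathbb{O}(s)t + \mathbb{S}(s)t^4) + \mathbb{S}(s)t^2\bigr)^{-1}$ for the presentation $\mathcal{F}/\mathcal{R}$. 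The general entrywise inequality $P_{\U(\mathcal{F}/\mathcal{R})}\geq $ (Koszul Hilbert series), combined with the surjection $\phi$, forces equality, hence $\phi$ is an isomorphism and the presentation is Koszul. The stated identifications of $H_1(\gd;\Q)$ and $H_2(\gd;\Q)$, as well as the vanishing of $H_{\geq 3}(\gd;\Q)$, then fall out of Koszulness.

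The main obstacle is to prove $\e_P \in \gd$. The element $\e_P$ is constructed to satisfy the linearized double-shuffle equations and so lies in the larger Lie algebra $\gdsh$, but promoting this to $\gd$ requires exhibiting $\e_P$ as the leading depth term of a genuine motivic Lie element of $\gmzv$. A natural approach is to compute the differential $d\colon \Ss_{2n} \to (\gd_4)^{ab}$ from the depth spectral sequence (\ref{introdmap}) and identify its image with $\e(\Ss_{2n})$ modulo decomposable terms; this in turn requires explicit control of the depth-$3$ component of some choice of motivic generators $\sigma_{2i+1}$, which are known to have complicated denominators. An alternative would be a direct construction of motivic lifts of $\e_P$ using iterated integrals on $\Pro^1\setminus\{0,1,\infty\}$ with period-polynomial weights. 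Either route appears substantial, and I would not expect the problem to yield without further input from the modular-forms side of the story.
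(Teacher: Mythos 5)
This statement is a \emph{conjecture}, not a theorem: the paper offers no proof of it, explicitly states that the motivicity of the elements $\e_P$ (i.e.\ whether $\e(\Ss)\subset\gd$) is open, and says in the introduction that conjecture \ref{introconjMBK} is not discussed \emph{per se} in the paper. So there is no proof to compare yours against, and your proposal --- which you honestly present as a plan rather than a proof --- cannot be counted as establishing the statement. You do correctly identify motivicity as the central obstacle, and your suggested route via the spectral-sequence differential $d:\Ss\to(\gd_4)^{ab}$ of $(\ref{introdmap})$ is exactly the one the paper pursues in \S\ref{sectspectral} and the examples \ref{exEmot}, where motivicity is only \emph{verified numerically} up to weight $30$ (with the complication of non-canonical depth-$3$ lifts $\sigma^{(3)}_{2i+1}$ and Bernoulli-numerator denominators). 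That part of your write-up is consistent with the paper.

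However, the ``granting motivicity'' half of your argument has genuine gaps beyond the open motivicity question. First, you feed the numerical Broadhurst--Kreimer conjecture $(\ref{introBK})$ in as an input to extract $\mathcal{X}_{\U\gd}(s,t)=(1-\mathbb{O}(s)t+\mathbb{S}(s)t^2-\mathbb{S}(s)t^4)^{-1}$; but $(\ref{introBK})$ is itself open, and the logical direction in the paper (\S\ref{sectEnum} and the corollary giving $(\ref{motBK})$) is the reverse: the homological statement \emph{implies} the dimension formula. Using the dimension formula to derive the homology is circular relative to what is actually known. Second, you assume the surjectivity of $\phi:\mathcal{F}/\mathcal{R}\to\gd$, i.e.\ that the $\overline\sigma_{2i+1}$ and $\e_P$ generate $\gd$; this is precisely the content of the $H_1$ claim and cannot be assumed. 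Third, even granting both, your squeeze does not close: the Golod--Shafarevich-type bound gives $\mathcal{X}_{\U(\mathcal{F}/\mathcal{R})}\geq(1-g+r)^{-1}$ coefficientwise, and the surjection gives $\mathcal{X}_{\U(\mathcal{F}/\mathcal{R})}\geq\mathcal{X}_{\U\gd}$; both inequalities point the same way, so equality of $\mathcal{X}_{\U\gd}$ with the Koszul series does not force $\phi$ to be an isomorphism --- one still needs an upper bound on $\U(\mathcal{F}/\mathcal{R})$, i.e.\ the absence of unexpected relations among brackets of the $\e_P$, which the paper flags as the part of the conjecture that could fail in weights beyond computational reach. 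Finally, the Euler-characteristic identity $(\ref{Xformula})$ only constrains the alternating sum of the $\mathcal{X}_{H_i}$, so the individual identifications of $H_1$, $H_2$ and the vanishing of $H_{\geq 3}$ do not ``fall out'' of a Hilbert-series match alone.
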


 We show in \S \ref{sectEnum}  that  conjecture $\ref{introconjMBK}$   implies the version of $(\ref{introBK})$ in which multiple zeta values are replaced by motivic multiple zeta values. This is in turn equivalent to  $(\ref{introBK})$ if one assumes the period conjecture for mixed Tate motives over $\Z$.

 Conjecture \ref{introconjMBK}  describes    all relations between depth-graded motivic multiple zeta values (modulo products and  $\zetam(2)$): a linear relation of weight $N$ and depth $d$ is true if and only if 
 it holds amongst the coefficients of all Lie brackets of the elements $\overline{\sigma}_{2n+1}$ and $\e_P$  of the same total weight and depth (see  \ref{example12D4}).

 \subsubsection{Linearised double shuffle equations}
 The main evidence for the previous conjecture comes from the double shuffle equations.  It is known that
 $$\gmzv \subset \mathfrak{grt}  \subset \mathfrak{dmr} $$
 where $\mathfrak{grt}$ is Drinfeld's  Grothendieck-Teichm\"uller Lie algebra, and $\mathfrak{dmr}$ is Racinet's regularised double shuffle Lie algebra, both of which are defined by explicit equations. The inclusion  of $\gmzv$ in $\mathfrak{grt}$ and $\mathfrak{dmr}$ follow from theorem \ref{DI} and results of Drinfeld  \cite{Drinfeld} and Racinet  \cite{R} respectively. The inclusion $\mathfrak{grt} \subset \mathfrak{dm}_0$ is due to Furusho \cite{F}.
 
 If we pass to the depth-graded Lie algebras, we have
 $$\gd \subset  \gr^{\dd} \mathfrak{dmr} \subset \gdsh$$ 
 where $\gdsh$ are the linearized double shuffle equations defined in \cite{IKZ}. The advantage of these equations are  that they  are extremely simple to define: $\gdsh$ is essentially the intersection of two shuffle algebras.  Hitherto, $\gdsh$  was studied merely as a vector space, but it turns out that, as a consequence of the work of Racinet, that it also inherits a Lie algebra structure for the linearised Ihara bracket.    We offer a complete conjectural description of  $\gdsh$ below. 
  
 The first theorem states that the exceptional elements are solutions to the linearised double shuffle equations in depth 4. 
 \begin{thm}
 There is an explicit injective linear map
\begin{equation} \e:\Ss_{2n} \To \gdsh_4\ .
\end{equation}
\end{thm}
The formula for the map  $\e$  is given in \S \ref{sectExcep}, and associates   to every   even period polynomial $f$ a  solution $\e_f$ of the linearized double shuffle relations.   The question of whether the elements $\e_f$ 
are motivic, i.e., whether they lie in $\gd$, is open.

\begin{conj} \label{introconjMBKStrong}  One has  \begin{eqnarray}
H_1(\gdsh;\Q) &\cong&  \bigoplus_{n\geq 1} \overline{\sigma}_{2n+1} \Q \oplus  \bigoplus_{n} \e(\Ss_{2n})    \label{introfinalBK} \\
H_2(\gdsh;\Q) &\cong&  \bigoplus_n \Ss_{2n} \nonumber \\
H_i(\gdsh;\Q) &= &  0  \quad \hbox{ for } \quad  i\geq 3 \ .  \nonumber 
\end{eqnarray}
\end{conj}

This conjecture states that $\gdsh$ is generated by zeta elements and the exceptional generators in depth 4 subject only to the known quadratic relations between zeta elements. It is at the same time, the simplest, and  the strongest conjecture  that one can formulate. It implies several open conjectures about relations between multiple zeta values. For example, it implies: conjecture \ref{introconjMBK}, and hence the motivic version of the Broadhurst-Kreimer conjecture; the conjecture   $\gd=\gdsh$ (which in turn implies a conjecture in \cite{IKZ}); and the conjectures   $\gmzv= \mathfrak{grt}$ (Drinfeld)   and $\gmzv = \mathfrak{dm}_0$  (Zagier, Racinet).  The proofs of these implications use theorem \ref{DI} and Furusho's theorem \cite{F} in an essential  way. 
Since the Lie algebra $\gdsh$ is defined in a very simple and completely elementary way, the previous conjecture suggest a possibility of 
seeking a proof of all  of the above conjectures intrinsically within  the theory of  modular forms.

\subsubsection{Discussion}  
Note  that  the vanishing of $H_i(\gdsh)$ for $i\geq 3$ is equivalent to the vanishing for $i=3$. This follows from the well-known fact that the vanishing of a Yoneda Ext group causes all higher Ext groups to vanish (\cite{EL},  \cite{BrZeta} remark 8.6). 

The exceptional elements $\e$ satisfy a range of special  properties which are studied in \S\ref{sectExcep}, \ref{sectEfProp}. Using these, we can prove that they satisfy no quadratic relations, which provides some meagre evidence in favour of the previous conjectures. 
We  do not know, however, that the $\e_f$ are non-trivial in the abelianisation of $\gdsh$, nor can we presently rule out the existence of relations of the form
$$\{\e_f, \overline{\sigma}_{2n+1} \}  \  \in \   \mathrm{Lie}_5  \, \gdsh_1$$
which can only occur in  depth $\geq 5$ and weight $\geq 15$.   Relations which are quadratic in  the $\e_f$ could first occur in weight 28 and depth 8. Viewed  from this perspective, the current numerical data in favour of the standard conjectures on multiple zeta values is  lacking, since new phenomena could potentially occur in weights and depths beyond the range of current experimentation.   Any such phenomena  would point to new and fascinating connections  between mixed Tate motives over $\Z$ with geometry and arithmetic.  Indeed,  the methods introduced in this paper should  enable one to test the validity of conjecture \ref{introconjMBKStrong} to far higher weights than presently known.

The motivic Lie algebra $\gmzv$ together with its depth filtration $\dd$ naturally gives rise to a spectral sequence \S\ref{sect:DepthSpectralSequence}, and the Broadhurst-Kreimer conjecture is equivalent to the statement that this spectral sequence should behave as simply as possible (given the existence of the quadratic relations between $\overline{\sigma}_{2n+1}$); there is only one non-trivial differential: 
$$d  : \Ss_{2n} \To (\gd_4)^{\mathrm{ab}}$$
This is possibly also an argument in  favour of conjecture \ref{introconjMBK}.  In \cite{BrZeta} we computed this differential by finding canonical lifts of the zeta elements to depth 3. The relation with the map $\e$ is mysterious, although Yasuda has subsequently found a conjectural relation between and $d$ and the image of $\e$ in the abelianisation of $\gdsh$ (private correspondence) which  involving critical values of $L$-functions. See the computations in example \ref{exEmot}.

Finally, we investigate  the Lie subalgebra of $\gd$ which is generated only by the elements $\overline{\sigma}_{2n+1}$ (without exceptional elements), and conjecture that it  describes the structure of \emph{totally odd} depth-graded motivic multiple zeta values.

\subsection{Contents of the paper}
In sections \S\S \ref{sectreminders}-\ref{sectdepth} we recall some background on the motivic fundamental group of $\Pro^1\backslash \{ 0, 1, \infty\}$, the Ihara action and  the depth filtration. In \S \ref{sectLinrel}, we discuss the linearized double
shuffle relations from the Hopf algebra point of view.
In \S \ref{sectPolyrep}, and throughout the rest of the paper, we use polynomial representations  to replace words of fixed $\dd$-degree $r$ in $e_0,e_1$ with polynomials in $r$ variables:
$$
\overline{\rho} \ : \  \gr_r^{\dd}  \mathfrak{g}  \To  \Q[x_1,\ldots, x_r]  $$
which sends words beginning in $e_0$ to zero, and   $\overline{\rho} (e_1 e_0^{n_1}  \ldots e_1 e_0^{n_r})=x_1^{n_1}\ldots x_r^{n_r}$.
This replaces identities between non-commutative formal power series with functional equations in commutative polynomials, strongly reminiscent of those considered in  \cite{Ecalle, IKZ}.  
We show that in the polynomial representation, the Ihara bracket
has an extremely simple form   (\S \ref{sectdihedihara}). A simple way to view the duality between depth-graded motivic multiple zeta values and polynomials is via the generating series 
$$\sum_{n_1,\ldots, n_r \geq 1}  \zetam_{\dd}(n_1,\ldots, n_r)\,  x_1^{n_1-1} \ldots x_r^{n_r-1}\ .$$
 The  generators $(\ref{introsigmabar})$ are simply  the coefficients of $\zetam_{\dd}(2n+1)$: 
$$\overline{\rho} (\overline{\sigma}_{2n+1}) = x_1^{2n} \quad \hbox{ for } n\geq 1 \ .$$
    In \S \ref{sectmod2} we review the  relation between period polynomials and depth two multiple zeta values, and in \S\ref{sectExcep}, we define for each period polynomial 
   $P$, an element
   $$\overline{\rho}(e_P)  \in \Q[x_1,x_2, x_3, x_4]\ ,$$
   which defines the exceptional elements in $\gdsh_4$. These elements satisfy some remarkable properties (\S \ref{sectEfProp})  which are stable under the Ihara bracket. 
In \S\ref{sectinterp} we discuss conjecture \ref{introconjMBK} and its consequences, and in \S\ref{secttotallyodd} we discuss some applications for the enumeration of the totally odd multiple zeta values
$\zeta(2n_1+1, \ldots, 2n_r+1)$ where $n_i\geq 1$.

\subsubsection{Related work}
Since the first draft of this paper appeared, there have been a number of related developments, which cannot all be mentioned here for reasons of space. We apologise to the many people, who have subsequently extended some of the ideas in this paper in various directions \cite{Matthes,  Tasaka, Ma, Li}, not only for the delay in publishing this work, but also for being unable  to give a complete survey of subsequent developments of the subject here.

First of all, an alternative description of depth 4 exceptional generators was given in \cite{BrZeta}, by constructing canonical zeta elements in $\gmzv/\dd^4 \gmzv$ and applying the differential in the depth spectral sequence.  These elements  are motivic (i.e., lie in $\gd_4$), but 
their relationship to the exceptional elements $\e$ defined here is far from  clear.   The entire construction  relies on the relationship between $\Pro^1 \backslash \{0,1,\infty\}$  (genus 0) and  the unipotent completion of the fundamental group Tate elliptic curve (genus 1), and the fact that the depth filtration is induced by natural filtrations in the elliptic setting.  Similarly, the precise relationship between the work of Pollack \cite{Pollack} is now  partially understood but warrants further investigation. 

Subsequently, the work \cite{MMV}  explains the origin of the quadratic relations between $\overline{\sigma}_{2n+1}$  by proving that they come from `modular elements'  corresponding to non-critical values of L-functions of cusp forms, which act on the relative completion of the fundamental group on the moduli stack $\mathcal{M}_{1,1}$ of elliptic curves. From this perspective, the canonical generators $\overline{\sigma}_{2n+1}$ can be understood as coming from Eisenstein series, which this supports the philosophy that conjecture \ref{introconjMBKStrong} relates entirely to  modular forms. Nevertheless,  the connection between modular elements and the phenomena in depth 4  studied in the present paper remain unexplored.

In a very different direction, Goncharov  \cite{GG}  has studied  the depth filtration from the  perspective of the  homology of the general  linear group $\mathrm{GL}_d(\Z)$, which he relatest to  the structure of multiple zeta values in depth $d$. We have no understanding of the relationship between this point of view and  conjecture $(\ref{introfinalBK})$.

After the first draft of this paper appeared, the conjecture \ref{conjmain} was related to questions of Koszulity in the paper  \cite{EL}.

Finally, the interested reader may wish to consult the notes\footnote{\url{www.ihes.fr/~brown/BKExactSeq1.pdf}}, where, in   answer to a question due to Zagier, we  reformulated the Broadhurst-Kreimer conjecture as a short exact sequence, which suggests another line of attack on the conjecture \ref{introconjMBKStrong}. 
\\

\emph{Acknowledgements}.  
 This work was partially supported by ERC grants PAGAP, ref. 257638 and GALOP, ref. 724638. I would like to thank the Institute for Advanced Study and the  Humboldt Foundation. This work was partly inspired by  Zagier's lectures at the Coll\`ege de France in October 2011, in which he explained parts of 
\cite{IKZ}, \S8, and partly based on a talk at a conference  in June 2012 at the IHES in honour of P. Cartier's eightieth birthday. It is a pleasure to dedicate this work to him, with my deepest gratitude.

\section{Reminders on $\pi_1^{\mathfrak{m}}(\Pro^1\backslash \{0,1,\infty\})$} \label{sectreminders}

\subsection{The motivic $\pi_1$ of $ \Pro^1\backslash \{0,1,\infty\}$} \label{sect21} Let $X=\Pro^1\backslash \{0,1,\infty\}$, and let 
$\tone_0, -\tone_1$
denote the  tangential base points on $X$  given by the tangent vector  $1$ at $0$, and  the tangent vector $-1$ at $1$. Denote the de Rham realization of the motivic fundamental torsor of paths on $X$ with respect to these basepoints  by: 
$$\opi= \pi^{dR}_1 ( X,\tone_0, -\tone_1)  \ .$$
It is the affine scheme over $\Q$ which to any commutative unitary $\Q$-algebra $R$ associates the set of group-like formal power series in two non-commuting variables $e_0$ and $e_1$
$$\{ S \in R\langle \!\langle e_0, e_1 \rangle \! \rangle^\times:  \Delta S= S \widehat{\otimes} S\}\ ,$$
where $\Delta$ is the completed coproduct for which the elements $e_0$ and $e_1$ are primitive: $\Delta e_i = 1\otimes e_i + e_i \otimes 1$ for $i=0,1$. The ring of regular functions on $\opi$ is the $\Q$-algebra
$$\Or(\opi)\cong \Q \langle e^0, e^1 \rangle$$
whose underlying vector space is spanned by the set of words $w$ in the letters $e^0, e^1$, together with the empty word, and  whose multiplication is given by
 the shuffle product $\sha:  \Q \langle e^0, e^1 \rangle\otimes_{\Q}  \Q \langle e^0, e^1 \rangle \rightarrow  \Q \langle e^0, e^1 \rangle $.
 The deconcatenation of words defines a coproduct, making $\Or(\opi)$ into a Hopf algebra. This gives rise to a group structure on  $\opi(\Q)$ (corresponding to the fact that in the de Rham realisation there is a canonical path between any two points on $X$).   Any word $w$ in $e^0,e^1$  defines a function 
$$\opi(R) \To R$$
which extracts the coefficient $S_w$ of the word $w$ (viewed in $e_0,e_1$)  in  a group-like series $S \in R\langle \!\langle e_0, e_1 \rangle \! \rangle^{\times} $. 
The Lie algebra of  $\opi(\Q)$  is the completed Lie algebra   $\g^\wedge$ of the graded Lie algebra $\g$  which is freely generated by the two elements  $e_0,e_1$ in degree  minus one. The universal enveloping algebra of $\g$  is the tensor algebra on $e_0,e_1$:
\begin{equation} \label{Ugdescription}
\U \g = \bigoplus_{n\geq 0}  (\Q\, e_0 \oplus \Q \, e_1)^{\otimes n} \ .
\end{equation} 
It is the graded cocommutative Hopf algebra which is the graded dual of $\Or(\opi)$. Its multiplication is given by  the concatenation product, and its coproduct is the   unique coproduct for which  $e_0$ and  $e_1$ are primitive.

\subsection{Action of the motivic Galois group} Now let $\MT(\Z)$ denote the Tannakian category of mixed Tate motives over $\Z$, with canonical  fiber functor   given by the de Rham realization.  Let $\GMT$ denote the group of automorphisms of this fiber functor.  It is an affine group scheme over $\Q$. It has a decomposition as a semi-direct product
$$ \GMT\cong  \UMT \rtimes \mathbb{G}_m \ ,$$
where $\UMT$ is pro-unipotent. Furthermore, one knows from the relationship between the Ext groups in $\MT(\Z)$ and Borel's results on  the rational algebraic $K$-theory of $\Q$ that 
the graded Lie algebra of $\UMT$ is non-canonically isomorphic to the  Lie algebra freely generated by  one generator $\sigma_{2i+1}$ in degree  $-(2i+1)$ for every $i\geq 1$.
It is important to note that only the  classes of the elements $\sigma_{2i+1} $ in the abelianization $\UMT^{ab}$ are canonical,  the elements $\sigma_{2i+1}$ themselves are not.
    
    Since  $\Or(\opi)$ is the de Rham realization of an  Ind-object in the category $\MT(\Z)$, there is an action of the motivic Galois group
    \begin{equation} \label{UMTaction} \UMT \times \opi \To \opi\ . \end{equation}
   The action of $\UMT$ on the unit element $1\in \opi$  defines a map
   \begin{equation} \label{gtog1} g\mapsto g(1): \UMT \To \opi\ ,
   \end{equation}
   and the action $(\ref{UMTaction})$ factors through a map   
\begin{equation}\label{action2}
       \circ : \opi \times \opi  \To  \opi  
\end{equation}
 first computed by Y. Ihara. 
It is  obtained from \cite{DG}, \S5.9, \S5.13 by reversing all words in order to be consistent with our conventions for iterated integrals.
 Given  $a\in \opi$, its action  on an element $g\in \opi$ is
 \begin{equation}Ê\label{IharaFullAction}    a\circ g =   (\langle a\rangle_0 (g)) \, a\ ,
 \end{equation}
 where $\langle a \rangle_0$ acts on the generators 
 $\exp(e_i)$, for $i=0,1$, by
\begin{eqnarray} \label{a0action}
\langle a \rangle_0 ( \exp(e_0))  & = &  \exp(e_0)  \\
 \langle a \rangle_0 (\exp(e_1)) & = & a \exp(e_1) a^{-1}  \ .\nonumber 
\end{eqnarray}

\begin{defn} Define a $\Q$-bilinear map 
$$ \circb  : \U \g \otimes_{\Q} \U \g \rightarrow \U \g \ $$
inductively as follows. For   any words $a, w$ in $e_0,e_1$, and for any integer  $n\geq 0$,   let 
\begin{equation}\label{circbdef}
 a \circb  (e_0^{n} e_1 w) =        e_0^n  a e_1  w  +   e_0^n e_1 a^* w +e_0^n e_1 (a \circb w)   \end{equation}
where  $a \circb e_0^n =  e_0^n \, a $, and  for any $a_i \in \{e_0,e_1\}$, 
 $(a_1\ldots a_n)^* = (-1)^n a_n \ldots a_1.$
 \end{defn} 
The map $\circb$ is  a linearisation of the full action $\circ : \U \g \otimes_{\Q} \U \g \rightarrow \U \g$ induced by $(\ref{UMTaction})$.

\begin{prop} \label{propcircscompared} Let $\circ$ also  denote the action 
\begin{equation} \label{gUgtoUg}Ê \g \otimes_{\Q} \U \g \rightarrow \U \g \end{equation}
induced by $(\ref{action2})$. If $i:\g \rightarrow \U \g$ is the natural map, then 
$a \circ b = i(a) \circb b$.
\end{prop}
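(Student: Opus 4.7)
The plan is to linearize the Ihara action $(\ref{IharaFullAction})$ in the first variable at the identity element, and check that the resulting formula agrees with the recursive definition $(\ref{circbdef})$ by induction on the number of occurrences of $e_1$.

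First I would set $a=\exp(\epsilon\alpha)$ for $\alpha\in\g$, so that $a = 1 + \epsilon\alpha + O(\epsilon^2)$ and $a^{-1}= 1-\epsilon\alpha + O(\epsilon^2)$. By $(\ref{a0action})$, the automorphism $\langle a\rangle_0$ fixes $e_0$ and sends $e_1$ to $a e_1 a^{-1} = e_1 + \epsilon[\alpha,e_1] + O(\epsilon^2)$. For a word $g = e_0^{n_0} e_1 e_0^{n_1} e_1 \cdots e_1 e_0^{n_r}$, extracting the coefficient of $\epsilon$ in $\langle a\rangle_0(g)\,a$ gives
$$\alpha\circ g \;=\; \sum_{i=1}^{r} e_0^{n_0}e_1 \cdots e_0^{n_{i-1}}\bigl(\alpha e_1 - e_1\alpha\bigr)e_0^{n_i}e_1\cdots e_0^{n_r} \;+\; g\alpha,$$
where the final summand $g\alpha$ comes from the trailing factor $\cdot a$ in $(\ref{IharaFullAction})$.

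Next I would compare this expression to $i(\alpha)\circb g$ by induction on the number of $e_1$'s in $g$. The base case $g=e_0^n$ is immediate from $a\circb e_0^n = e_0^n a$. For the inductive step, write $g = e_0^n e_1 w$ and unwind $(\ref{circbdef})$:
$$\alpha\circb g \;=\; e_0^n \alpha e_1 w + e_0^n e_1\,\alpha^* w + e_0^n e_1(\alpha\circb w).$$
The key input is the identity $\alpha^* = -\alpha$ valid for any homogeneous Lie element $\alpha\in\g$ of positive degree. This is the restriction of the antipode of the tensor Hopf algebra $\U\g$ to the primitive elements, where it acts as $-1$. Substituting $\alpha^* = -\alpha$, the first two terms collapse to $e_0^n[\alpha,e_1]w$, which matches the $i=1$ contribution from the direct linearization, while the inductive hypothesis applied to $w$ identifies $e_0^n e_1(\alpha\circb w)$ with the sum of the $i\geq 2$ contributions plus the trailing $g\alpha$.

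The main (and really only) obstacle is the correct invocation of $\alpha^*=-\alpha$: this is where the hypothesis $\alpha\in\g$ rather than $\alpha\in\U\g$ is used, and it explains why the reversal-with-sign $a^*$ appears in $(\ref{circbdef})$ in exactly the right place to reproduce the infinitesimal conjugation $[\alpha,e_1]$. Once this is in place, everything else is routine bookkeeping to match the two expansions term by term.
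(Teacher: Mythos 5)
Your proof is correct and follows essentially the same route as the paper: linearize the Ihara action $(\ref{IharaFullAction})$ at the identity so that conjugation by $a$ becomes the derivation $e_1\mapsto[\alpha,e_1]$, and reconcile this with the recursion $(\ref{circbdef})$ via the fact that the antipode $*$ acts as $-1$ on Lie elements. The paper encodes that last point through the relation between the coefficients of $S$ and $S^{-1}$ (writing the derivation as $e_1\mapsto f e_1 + e_1 f^*$) and leaves the unwinding of the recursion implicit, whereas you make the induction explicit; the content is identical.
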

\begin{proof} For any $S \in \opi$, the coefficient of $w$ in $S^{-1}$ is  equal to the coefficient of $w^*$ in $S$, since the map $*$ is the antipode in $\Or(\opi)$.  Identifying $\g$ with its image in $\U \g$, it follows that  the infinitesimal, weight-graded,  version 
of  the map $(\ref{a0action})$  is the derivation $\langle f\rangle_0: \g \rightarrow \g$ which  for any $f\in \g$ is given by  $$e_0\mapsto 0 \quad ,  \quad e_1 \mapsto f e_1 + e_1 f^*\ . $$ Thus  $\langle f\rangle_0 : \U \g \rightarrow \U \g $ is the map
\begin{multline} \nonumber \langle f \rangle_0  \, (e_0^{m_0} e_1 \ldots e_1 e_0^{m_r} ) = e_0^{m_0}(f  e_1 +e_1 f^*) e_0^{m_1} e_1 \cdots e_0^{m_{r-1}} e_1 e_0^{m_r} +    \\
 \ldots + e_0^{m_0} e_1 e_0^{m_1} \cdots  e_0^{m_{r-1}}(f e_1+ e_1 f^*) e_0^{m_r} \ .  
\end{multline}
 Concatentating on the right by $f$ gives precisely the map defined by $(\ref{circbdef})$.
\end{proof}

\subsection{The motivic Lie algebra} By  equation $(\ref{gtog1})$, we obtain a map of Lie algebras
\begin{equation}  \label{MLA}
\Lie^{gr} (\UMT) \To  \left(\g \ , \ \{ , \} \right)  \ ,
\end{equation}
where the Ihara bracket satisfies $\{f,g\}= f\circ g- f \circ f$. 
It   follows from theorem \ref{DI} that this map is injective \cite{BrMTZ}. In this paper, we shall identify 
$\Lie^{gr}(\UMT)$ with its image, and  abusively call it  the motivic Lie algebra.

\begin{defn} The  motivic Lie algebra $\gmzv \subseteq \g$  is  the image of the map $(\ref{MLA})$.
\end{defn} 

The Lie algebra $\gmzv$ is non-canonically isomorphic to the free Lie algebra with one generator $\sigma_{2i+1}$ in each degree $-(2i+1)$ for $i \geq 1$.

\section{Motivic multiple zeta values}
Let $\Amt$ denote the graded  Hopf algebra of  functions on $\UMT$. Dualizing $(\ref{UMTaction})$ gives the motivic coaction (written in this paper as a left coaction):
$$\Delta^{\Mot}: \Or(\opi) \To \Amt \otimes_{\Q} \Or(\opi)\ .$$
Furthermore, the de Rham image of the straight path $\dch$ from $0$ to $1$ in $X$ defines the Drinfeld associator element $\Phi\in \opi(\R)$ which begins
$$ \Phi= 1 + \zeta(2) [e_1,e_0] +  \zeta(3) ([e_1,[e_1,e_0]]+ [e_0,[e_0,e_1]] ) + \ldots  $$
The map which takes the coefficient of a word $w$ in $\Phi$ defines the period homomorphism
$$per : \Or(\opi) \To \R\ .$$
Here, we use the convention from \cite{BrMTZ}: the  coefficient of the word $e_{a_1}\ldots e_{a_n}$ in $\Phi$, for $a_i \in \{0,1\}$, is the  iterated integral 
$$\int_0^1  \omega_{a_1} \ldots \omega_{a_n} $$
regularized with respect to the tangent vector $1$ at $0$, and $-1$ at $1$, where the integration begins on  the left, and 
$\omega_0 = {dt \over t}$ and $\omega_1 = {dt \over 1-t}$. 
\begin{defn}  \label{def3.1} The algebra of motivic multiple zeta values is defined as follows. The ideal of motivic relations between MZV's is defined to be $\Jmt\leq \Or(\opi)$,  the largest graded ideal  contained in the kernel of $per$ which 
is stable under $\Delta^{\Mot}$.   We set
$$\Ho = \Or(\opi)/ \Jmt\ , $$
and let $\zetam(n_1,\ldots, n_r)$ denote the image of the word $e^1(e^0)^{n_1-1} \ldots e^1 (e^0)^{n_r-1}$ in $\Ho$. Likewise,  for any $a_1,\ldots, a_n \in \{0,1\}$,  we let
$\Imot(0;a_1,\ldots, a_n;1)$ denote the image of the word $e^{a_1}\ldots e^{a_n}$ in $\Ho$.
Equivalently, one can define   $$\Imot(0;a_1,\ldots, a_n;1) = [\Or(\pi_1^{mot}(X, \tone_1, -\tone_0), \mathrm{dch}, e^{a_1}\ldots e^{a_n} ]^{\mathfrak{m}}$$ 
 where the right-hand side is a motivic period \cite{BrICM} of the category $\MT(\Z)$, and define the motivic multiple zeta values by specialising to the case when $a_1=1$ and $a_n=0$. 
\end{defn}

Then $\Ho$ is naturally graded by the weight, has a graded coaction
\begin{equation}\label{deltacoaction}
\Delta^{\Mot}: \Ho \To \Amt \otimes_{\Q} \Ho \end{equation}
and a period map $per:  \Ho \rightarrow \R$. The period of $\zetam(n_1,\ldots, n_r)$ is $\zeta(n_1,\ldots, n_r)$.
One obtains partial information about the  motivic coaction $(\ref{deltacoaction})$ using the fact that it factors through the 
coaction which is dual to the Ihara action $(\ref{action2})$.

\subsection{The Ihara coaction} \label{sectIharacoact} 
For any graded Hopf algebra $H$, let $IH= H_{>0}/H^2_{>0}$ denote the Lie coalgebra of indecomposable elements of $H$, and let $\pi: H_{>0} \rightarrow IH$ denote the natural map.
  Dualizing  $(\ref{gUgtoUg})$ gives an infinitesimal 
coaction
\begin{equation} \Or(\opi) \To I \Or(\opi) \otimes_{\Q} \Or(\opi) \ .
\end{equation}
Let $D_{r}:  \Or(\opi) \rightarrow I \Or(\opi)_r \otimes_{\Q} \Or(\opi)$ denote its  component of degree $(r,\cdot)$, and let us denote the element $e^{a_1}\ldots e^{a_n}$ in $\Or(\opi)$ 
by $\II(0;a_1,\ldots, a_n;1)$, where $a_i \in \{0,1\}$. 

\begin{prop} Set $a_0=0, a_{n+1}=1$.  For all $r\geq 1$, and 
$ a_1,\ldots, a_n \in \{0,1\}$,
 \begin{multline}\label{mainformula}   
 D_r  \, \II( 0; a_1, \ldots, a_n;  1 ) =   \\ 
\sum_{p=0}^{n-r} \pi( \II(a_{p} ;a_{p+1},\tdots, a_{p+r}; a_{p+r+1})) \otimes   \II(0; a_1, \tdots, a_{p}, a_{p+r+1}, \tdots ,  a_n ; 1) \ .
 \end{multline}
where $\II(a_{p} ;a_{p+1},\tdots, a_{p+r}; a_{p+r+1}) \in \Or(\opi)$ is defined to be zero if $a_p=a_{p+r+1}$, and equal to 
$(-1)^r \II( a_{p+r+1} ;a_{p+r},\tdots, a_{p+1}; a_{p})$
if $a_p=1$ and $a_{p+r+1}=0$.
\end{prop}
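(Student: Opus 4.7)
The plan is to dualize the explicit Ihara action formula $(\ref{circbdef})$ from Proposition~\ref{propcircscompared}. Recall that the infinitesimal Ihara action $\g \otimes \U \g \to \U \g$ dualizes, via the canonical pairing between $\U \g$ and $\Or(\opi)$, to a coaction $\Or(\opi) \to I\Or(\opi) \otimes_{\Q} \Or(\opi)$, of which $D_r$ is the component where the first tensor factor has weight $r$. The projection $\pi$ is dual to the inclusion $\g \hookrightarrow \U \g$ of primitives, so computing $D_r$ amounts to computing the $\g$-coefficients in $a \circb b$ for $a \in \U\g$ of weight $r$ and $b$ an arbitrary word.

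First I would iterate $(\ref{circbdef})$: for a word $a \in \U \g$ of length $r$ and an arbitrary $b \in \U \g$, the product $a \circb b$ decomposes as a sum over each occurrence of the letter $e_1$ in $b$, with two terms at each such $e_1$, namely $a\,e_1$ replacing the chosen $e_1$ (coefficient $+1$), or $e_1\,a^*$ replacing it (with the reversal $a \mapsto a^*$ producing the sign $(-1)^r$ since $|a|=r$).

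Dualizing, I fix a target word $w = e^{a_1}\ldots e^{a_n}$ and collect the contributions to $D_r\,\II(0;a_1,\ldots,a_n;1)$. Each such contribution is indexed by a choice of window $[p+1, p+r]$ of $r$ consecutive positions in $w$, with second tensor factor the word $\II(0; a_1,\ldots, a_p, a_{p+r+1}, \ldots, a_n; 1)$ obtained by deleting the window. The first tensor factor receives contributions of two possible types: if $a_{p+r+1}=1$, insertion of $a$ to the left of that $e_1$ contributes $\II(0; a_{p+1},\ldots, a_{p+r}; 1)$; if $a_p=1$, insertion of $a^*$ to the right of that $e_1$ contributes $(-1)^r\,\II(0; a_{p+r},\ldots, a_{p+1}; 1)$. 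These two possibilities are precisely packaged by the notation $\II(a_p; a_{p+1},\ldots, a_{p+r}; a_{p+r+1})$ defined in the proposition, using the stated path-reversal formula to identify the second case when $(a_p,a_{p+r+1})=(1,0)$.

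The hard part, and really the only delicate step, is to verify the boundary case $a_p = a_{p+r+1} = 1$, in which both insertion rules apply simultaneously and their contributions must cancel in $I\Or(\opi)$ in order to match the convention $\II(1;\cdots;1)=0$. Their sum equals $\pi(\II(0;a_{p+1},\ldots,a_{p+r};1)) + (-1)^r \pi(\II(0;a_{p+r},\ldots,a_{p+1};1))$, and this vanishes because the antipode of the commutative shuffle Hopf algebra $\Or(\opi)$ acts on a word of length $r$ by $S(e^{a_1}\ldots e^{a_r}) = (-1)^r e^{a_r}\ldots e^{a_1}$, and satisfies $\pi \circ S = -\pi$ on indecomposables (a general fact for commutative Hopf algebras). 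The opposite case $a_p = a_{p+r+1} = 0$ is immediate since neither insertion fires, matching the convention $\II(0;\cdots;0)=0$. Everything else is a bookkeeping translation of the recursion $(\ref{circbdef})$ into its dual.
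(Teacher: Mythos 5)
Your proof is correct and follows the same route as the paper, which simply asserts in one line that the formula is dual to $(\ref{circbdef})$; you have supplied the bookkeeping that assertion leaves implicit. In particular, your treatment of the boundary case $a_p=a_{p+r+1}=1$ via the antipode identity $\pi\circ S=-\pi$ is exactly the right way to see why the convention $\II(1;\cdots;1)=0$ is forced.
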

\begin{proof} One checks that this formula is dual to $\circb$ (see also \cite{BrICM}, \S2.5).   
\end{proof}
Since the motivic coaction  on $\Ho$ factors through the Ihara coaction,  it follows that the degree $(r,\cdot)$ component factors through operators
$$D_r:  \Ho \To I \Ao \otimes_{\Q} \Ho$$
given by the same formula as $(\ref{mainformula})$ in which each   term $\II$ is replaced by its image $\Imot$ in $\Ho$ (resp. $\Ao$).
Since $\Amt$ is cogenerated in odd degrees only, the motivic coaction on $\Ho$ is completely determined by  the set of operators $D_{2r+1}$ for all $r\geq 1$ (see \cite{BrMTZ}).

\section{The depth filtration} \label{sectdepth} 

\subsection{Definition} The inclusion 
$  \Pro^1\backslash \{0,1,\infty\} \hookrightarrow  \Pro^1\backslash \{0,\infty\}$ induces  a map on the motivic fundamental groupoids (\cite{DG}, \S6.1)
\begin{equation} \label{depthpimot} \pi^{mot}_1(X, \tone_0, -\tone_1) \rightarrow \pi^{mot}_1(\G_m, \tone_0, -\tone_1) \  ,
\end{equation}
and hence on the de Rham realisations
$$\Or(\pi^{dR}_1(\G_m,  \tone_0, -\tone_1))\To \Or(\opi)$$
which is  the inclusion of 
$ \Q\langle e^0 \rangle$ into $\Q \langle e^0, e^1 \rangle $. Define the image to be $\dd_0 \Or(\opi)$. It generates, via the deconcatenation coproduct, an increasing filtration called the depth:
\begin{equation} \label{depthasdege1}
\dd_i \Or(\opi) = \langle \hbox{words } w \hbox{ such that } \deg_{e^1} w \leq i \rangle_{\Q}\ , 
\end{equation}
with respect to which $\Or(\opi)$ is a filtered Hopf algebra. Furthermore, since the map $(\ref{depthpimot})$ is motivic, the depth filtration is 
preserved by the coaction  $(\ref{deltacoaction})$ (which also follows by direct computation), and  descends to the algebra $\Ho$.  By definition \ref{def3.1}  the depth filtration $\dd_d \Ho$ is the increasing filtration  defined by
$$\dd_d \Ho = \langle \zetam(n_1,\ldots, n_i): i\leq d\rangle_{\Q}\ .$$
Following \cite{DIHES}, it is convenient to define the $\dd$-degree on $\Or (\opi)$ to be  the degree in $e^1$.  It  defines a grading on $\opi$ which is not motivic.   By $(\ref{depthasdege1})$,  the depth filtration (which is motivic) is simply the increasing filtration associated to the $\dd$-degree. 

\subsection{Depth-graded motivic multiple zeta values}

\begin{defn}  Let  $\dd_d \Ao$ be the induced filtration  on the quotient $\Ao= \Ho/\zetam(2) \Ho$. 
 We define the \emph{depth-graded motivic multiple zeta value}    $$\zetam_{\dd} (n_1,\ldots, n_r)$$ to be the image of 
 $\zetam(n_1,\ldots, n_r)$ in  $\gr_r^{\dd} \Ho$. \end{defn}
 
 \begin{prop} There is a  non-canonical isomorphism of bigraded vector spaces:
 \begin{equation} \label{DHoisDAtensZeta2}
\gr^{\dd} \Ho  \cong \gr^{\dd}  \Ao  \otimes_{\Q}  \Q[\zetam_{\dd}(2)] \ ,
\end{equation} 
where $\zetam_{\dd}(2)^n$ are in depth 1 for  all $n\geq 1$. 
\end{prop}
\begin{proof}
Choose a homomorphism $\pi_2: \Ho \rightarrow \Q[\zetam(2)]$ which respects the weight-grading and such that $\pi_2(\zetam(2))=\zetam(2)$. Such a homomorphism
exists by  \cite{DG}, \S5.20 (see \cite{BrMTZ}, \S2.3). Composing with the coaction $(\ref{deltacoaction})$, we obtain a map
$$\Ho \overset{\Delta^{\Mot}}{\To} \Ao \otimes_{\Q}\Ho  \overset{id \otimes \pi_2}{\To} \Ao \otimes_{\Q}\Q[\zetam(2)]   \ . $$
By a motivic version  \cite{BrMTZ} of Euler's theorem, $\zetam(2)^n$ is a rational multiple of $\zetam(2n)$,  and so all powers of $\zetam(2)$ have depth one. The depth filtration thus satisfies 
$$\Q=\dd_0\Q[\zetam(2)] \  \subset\  \dd_1 \Q[\zetam(2)] = \Q[\zetam(2)] \ .$$
Since  $\dd_0 \Ho=\Q$,  it follows trivially that $\pi_2$ respects the depth filtration because  it is $\Q$-linear.
Since the depth filtration is motivic, it is also respected by  $\Delta^{\Mot}$, and therefore the composition
$ \Ho\rightarrow \Ao \otimes_{\Q}\Q[\zetam(2)]  $     also  respects the depth filtration.  The statement \eqref{DHoisDAtensZeta2}  follows since we  know that it is an isomorphism 
 (by (2.13) in \cite{BrMTZ} or   5.8 in \cite{DIHES}). 
\end{proof}

\subsection{Depth-graded motivic Lie algebra}

The depth filtration defines  a decreasing filtration $\dd^r$ on $\gmzv$  where $\dd^r$ consists of words with at  least $r$ occurrences of $e_1$: 
$$\dd^r  \gmzv = \langle w \in \gmzv: \deg_{e_1} w \geq r\rangle \ ,$$
We denote the associated graded Lie algebra 
 by $\gd$. There is correspondingly a decreasing  depth filtration on $\U \gmzv$, also denoted by $\dd$.

It follows from the definitions above that $\gd$ is the bigraded Lie algebra  dual to the bigraded coalgebra $\gr^{\dd} \, I  \mathcal{A}$ of depth-graded motivic multiple zeta values modulo products.  Thus the problem of studying relations between (motivic) multiple zeta values modulo lower depth (and modulo $\zetam(2)$) and the algebra $\gd$ are equivalent.

\subsection{Depth-parity}
The following proposition is a consequence of Tsumura's result  on double shuffle equations (proposition \ref{propparity}).

\begin{prop}  \label{propDepth-Parity} The depth-graded motivic Lie algebra $\gd$ vanishes  in bidegrees with different parity. 
More precisely, it vanishes in  weight $N$ and depth $r$ if $N\neq r \pmod 2$. 
\end{prop} 

Equivalently, if $n_1+\ldots+ n_r \not \equiv r  \pmod 2$, and $n_1+\ldots + n_r >2$ then 
\begin{equation} \label{vanishingzetamd}
\zetam_{\dd}(n_1,\ldots, n_r) \equiv 0 \pmod{\hbox{products}}\ .
\end{equation}
We do not need to work modulo $\zetam_{\dd}(2)$ in the previous equation since the weight is larger than $2$ by assumption and all other even zeta values $\zetam(2n)$, $n\geq 2$,  are products. 

\subsection{Depth spectral sequence} \label{sect:DepthSpectralSequence} The depth filtration on the motivic Lie algebra $\gmzv$ induces a homology spectral sequence which converges to associated graded for the  depth of the homology of $\gmzv$. By theorem \ref{DI}, the latter satisfies  
$$H_i(\gmzv) = \begin{cases} \bigoplus_{n\geq 1}  \Q [\sigma_{2n+1}]   \qquad \hbox{ if  } i =1  \\  0 \qquad \qquad  \qquad \qquad \hbox{ if } i \geq 1 \end{cases}\ ,  $$ 
and is entirely concentrated in depth $1$. 

The depth spectral sequence  satisfies 
$$E^1_{-p,q} = \gr_{\dd}^p H_{q-p} (\gd)$$
where $E^1_{-p,q}$ vanishes unless $p\geq 1$ and $p<  q \leq 2p$, as can easily be seen from the Chevalley-Eilenberg chain complex which computes the homology of a Lie algebra (see \S\ref{sectEnum} and \eqref{CE2}).
The differentials satisfy
$$d^r_{p,q} : E^r_{p,q} \rightarrow E^{r}_{p-r,q+r-1}\ .$$
\begin{prop} The differentials  $d^r$ vanish if $r$ is odd.
\end{prop} 
\begin{proof} The weight grading on $\gd$ induces a weight grading on the depth spectral sequence, with respect to which the differentials are of degree zero.
  It follows from proposition \ref{propDepth-Parity} and
  $E^1_{-p,q} = \gr_{\dd}^p H_{q-p} (\gd)$ that $E^1_{p,q}$, and hence all $E^r_{p,q}$,  vanish unless the depth and weight have the same parity. 
  The result follows since $d^r$ has (weight, depth)-bidegree $(0,r)$.
   \end{proof} 

Here follows  a picture of the potentially non-vanishing $E^1=E^2$ terms:
\begin{center}
\begin{tabular}{ccccc|c}
$\ddots$ & $\gr^4_{\dd}H_4$ & & &  &     8\\ 
 & $\gr^4_{\dd}H_3$ & & &  &     7\\ 
& $\gr^4_{\dd}H_2$ & $\gr^3_{\dd}H_3$ & &  &     6 \\ 
&$\gr^4_{\dd}H_1$ & $\gr^3_{\dd}H_2$ &  &  &     5 \\ 
&& $\gr^3_{\dd}H_1$ & $\gr^2_{\dd} H_2$ &  &     4 \\ 
&& & $\gr^2_{\dd} H_1$ &  &     3 \\ 
&& & & $\gr^1_{\dd}H_1$ &     2 \\ 
  &&  & &  &     1 \\ \hline
  \ldots & -4& -3 & -2 & -1   
\end{tabular}
\end{center}
In fact, we know by theorem \ref{DI} that:
 $$ \gr^1_{\dd}H_1 (\gd) \cong H_1(\gmzv) \cong  \bigoplus_{n\geq 1} \Q [\sigma_{2n+1}]\ ,$$ 
 and therefore everything to the left  of  the column indexed $-1$   converges to zero. 
Furthermore, one knows that  $\gr^2_{\dd} H_1 (\gd)=0$,  and $\gr^3_{\dd} H_i (\gd)$ vanish for all $i$. One also has  a complete description of $\gr^2_{\dd} H_2(\gd)$ in terms of period polynomials of cusp forms, as discussed below. Therefore,
the first  interesting part of the differential is
$$d^2: \gr^2_{\dd} H_2(\gd) \To  \gr^4_{\dd}H_1(\gd)\ ,$$
and the main conjecture \ref{introconjMBK} can be reformulated as saying that the components of all other differentials in the depth spectral sequence vanish.  
\section{Linearized double shuffle relations} \label{sectLinrel}

\subsection{Reminders on the standard relations}
We briefly review the double shuffle relations and their depth-linearized versions.  See \cite{Cartier, R, Anatomy} for further background.

\subsubsection{Shuffle product} Consider the algebra $\Q\langle e_0, e_1 \rangle$ of words in the two letters $e_0, e_1$, equipped with the shuffle product $\sha$. It is defined recursively by
\begin{equation} \label{shuffprod}
e_i w \sha e_j w' = e_i (w \sha e_j w') + e_j (e_i w \sha w')
\end{equation} 
for all $w,w'\in \{e_0,e_1\}^\times$ and $i,j\in \{0,1\}$, and the property that the empty word $1$ satisfies $1\sha w=w\sha 1=w$.  It is a Hopf algebra for the deconcatenation coproduct.  A linear map $\Phi: \Q\langle e_0, e_1 \rangle\rightarrow \Q$ is a homomorphism for the shuffle multiplication, 
or $\Phi_{w} \Phi_{w'} = \Phi_{w \sha w'}$ for all $w,w'\in \{e_0,e_1\}^\times$ and $\Phi_1=1$,  if and only if  the series
$$\Phi=\sum_w \Phi_w w  \in \Q \langle \langle e_0,e_1 \rangle \rangle$$
is invertible and group-like for the (completed) coproduct $\Deltas$ with respect to which $e_0$ and $e_1$ are primitive (compare \S\ref{sect21}). In other words, there is an equivalence:
\begin{equation} \label{shuffrel}
\Phi  \hbox{  homomorphism for  } \sha \quad \Longleftrightarrow  \quad \Phi   \in \Q \langle \langle e_0,e_1 \rangle \rangle^{\times} \hbox{ and } \Deltas \Phi= \Phi \widehat{\otimes} \Phi
\end{equation} 
One says that $\Phi$ satisfies the shuffle relations if either of the equivalent conditions $(\ref{shuffrel})$ holds. Passing  to the corresponding Lie algebra, we have an equivalence
\begin{equation} \label{shuffrelmodprod}
\Phi_{w\sha w'} = 0 \hbox{  for all } w,w' \quad \Longleftrightarrow  \quad \Phi   \in \Q \langle \langle e_0,e_1 \rangle \rangle \hbox{ and } \Deltas \Phi= 1 \widehat{\otimes} \Phi + \Phi \widehat{\otimes} 1
\end{equation} 
One says that $\Phi$ satisfies the shuffle relations \emph{modulo products} if either of the equivalent conditions $(\ref{shuffrelmodprod})$ holds.

The algebra $\Q\langle e_0, e_1\rangle$ is bigraded for the degree, or weight (for which  $e_0,e_1$ both have degree $-1$), and the $\dd$-degree 
for which  $e_1$ has degree $1$, and $e_0$ degree $0$. The relations $(\ref{shuffprod})$, $(\ref{shuffrel})$, $(\ref{shuffrelmodprod})$ evidently respect both gradings. 
In this case then, passing to the depth grading does not change the relations in any way, and the linearized shuffle relations are identical to the shuffle relations modulo products.

\begin{defn} Let $\Phi\in \gr_{\dd}^r \U \g$ be of weight $N$.  It defines a linear form $w\mapsto \Phi_w$ on words of  weight $N\geq 2$ and $\dd$-degree $r$. It
 satisfies the \emph{linearized shuffle relations} if 
  $$\Deltaps\Phi = 0$$
  where $\Deltaps$ is the reduced coproduct $\Deltaps(\Phi)= \Deltas(\Phi) - 1 \otimes \Phi - \Phi \otimes 1$. 
Equivalently, 
$\Phi_{w\sha w'} = 0$   for all words $w,w'\in \{e_0,e_1\}^{\times}$ of  total weight $N$  and total $\dd$-degree $r$.
\end{defn}

\subsubsection{Stuffle product}
Let $Y=\{ \y_n, n\geq 1\}$ denote an alphabet with one letter $\y_i$ in every degree $\geq 1$, and consider the graded  algebra
$\Q\langle Y\rangle$ equipped with the stuffle product \cite{R}. It is  defined recursively by
\begin{equation} \label{stuffprod}
\y_i w \stu \y_j w' = \y_i (w \stu \y_j w') + \y_j (\y_i w \stu w') + \y_{i+j} (w \stu w')
\end{equation} 
for all $w,w'\in Y^\times$ and $i,j \geq 1$, and the property that the empty word $1$ satisfies $1\stu w=w\stu 1=w$.  
A linear map $\Phi: \Q\langle Y \rangle\rightarrow \Q$ is a homomorphism for the stuffle multiplication,
or $\Phi_{w} \Phi_{w'} = \Phi_{w \stu w'}$ for all $w,w'\in Y^{\times}$ and $\Phi_1=1$,  if and only if  
$$\Phi=\sum_w \Phi_w w  \in \Q \langle \langle Y \rangle \rangle^{\times}$$
is group-like for the (completed) coproduct $\Deltat : \Q \langle \langle Y \rangle \rangle \rightarrow \Q \langle \langle Y \rangle \rangle \widehat{\otimes}_{\Q} \Q \langle \langle Y \rangle \rangle  $ which is 
a homomorphism for the concatenation product and  defined on generators by
\begin{equation}\label{Deltastuff}
\Deltat \y_n = \sum_{i=0}^n \y_i \otimes \y_{n-i}\ .\end{equation}
 Thus  the  stuffle relations are equivalent to being group-like for $\Deltat$:
\begin{equation} \label{stuffrel}
\Phi \hbox{ homomorphism  for  } \stu \quad \Longleftrightarrow  \quad \Phi   \in \Q \langle \langle Y \rangle \rangle^{\times} \hbox{ and } \Deltat \Phi= \Phi \widehat{\otimes} \Phi
\end{equation} 
One says that $\Phi$ satisfies the stuffle relations if either of the equivalent conditions $(\ref{stuffrel})$ holds. Passing  to the corresponding Lie algebra, we have an equivalence
\begin{equation} \label{stuffrelmodprod}
\Phi_{w\stu w'} = 0 \hbox{  for all } w,w' \quad \Longleftrightarrow  \quad \Phi   \in \Q \langle \langle  Y\rangle \rangle \hbox{ and } \Deltat \Phi= 1 \widehat{\otimes} \Phi + \Phi \widehat{\otimes} 1
\end{equation} 
One says that $\Phi$ satisfies the stuffle relations \emph{modulo products} if either of the equivalent conditions $(\ref{stuffrelmodprod})$ holds. 

The algebra $\Q\langle Y\rangle$ is  graded for the degree (where $\y_n$ has degree $n$), and filtered for the depth (where $\y_n$ has depth $1$). 
By inspection of $(\ref{stuffprod})$, we notice that the right-most term is of lower depth than the other terms and therefore drops out in the associated graded. It follows that 
\begin{equation} 
\gr_{\dd} ( \Q \langle Y \rangle, \stu) \cong (\Q \langle Y \rangle, \sha)
\end{equation} 
and the associated graded is simply the shuffle algebra on $Y$. The depth induces a decreasing filtration on the (dual) completed Hopf algebra $\Q\langle \langle Y \rangle \rangle$, and 
it follows from  $(\ref{Deltastuff})$ that the images of the elements $\y_n$ are primitive in the associated graded. Thus
\begin{equation}\label{grdeltastisdeltash}
\gr_{\dd} \Deltat = \Deltas
\end{equation}
where $\Deltas: \Q\langle \langle Y \rangle \rangle\rightarrow \Q\langle \langle Y \rangle \rangle \widehat{\otimes}_{\Q} \Q\langle \langle Y \rangle \rangle$ is the (completed) coproduct for which the elements $\y_n$ are primitive, and which is a homomorphism for  concatentation.

\begin{defn}  Let $\Phi \in T(\Q Y)$ of degree $N\geq 2$ and $\dd$-degree $r$.  It defines a linear map $w\mapsto \Phi_w$ on words  in $Y$ of  weight $N$ and $\dd$-degree $r$. We say that it 
 satisfies the \emph{linearized stuffle relations} if 
  $$\Deltaps \Phi = 0$$
  where $\Deltaps$ is the  reduced coproduct of $\Deltas$ for  which the  $\y_n$ are primitive.
Equivalently, 
$\Phi_{w\sha w'} = 0$   for all words $w,w'\in Y$ of  total weight $N$  and total $\dd$-degree $r$.
\end{defn}

\subsubsection{Linearized double shuffle relations}
In order to consider both relations simultaneously, define a map
$$\alpha:  \Q \langle e_0, e_1 \rangle\rightarrow \Q\langle Y \rangle  $$
which maps every word beginning in $e_0$ to $0$, and such that 
$$\alpha (e_1 e_0^{a_1}\ldots e_1 e_0^{a_r} ) = \y_{a_1+1}\ldots \y_{a_r+1}\ .$$
In \cite{R}, Racinet considered a certain graded vector space, denoted  $\DMD_0(\Q)$ (d\'efinition 2.4 in \cite{R}),  of series satisfying the shuffle and stuffle relations and a regularization condition, and showed that it is a Lie algebra for the Ihara bracket.
Since we consider the depth-graded version of this algebra, the regularization plays no role here.
\begin{defn}  \label{deflinearizedDSrel} Let $\Phi \in \gr^{\dd}_r  \U \g$ of weight $N$. The linear form $w\mapsto \Phi_w$ on words of  weight $N$ and $\dd$-degree $r$
 satisfies the \emph{linearized  double shuffle  relations} if 
  $$\Deltas \Phi = 0 \qquad \hbox{ and } \qquad \Deltat \alpha(\Phi) =0 \ .$$
When $r=1$, we add the extra condition that $\Phi=0$ when $N$ is even, and $\Phi(e_0)=\Phi(e_1)=0$. 
Let $\gdsh\subset \U \g$ denote the vector space of elements satisfying the linearized double shuffle relations.
It is bigraded by weight and depth.
\end{defn}

\begin{rem} There is a natural  inclusion
\begin{equation} \label{LStoDSH}  \gr_{\dd} \DMD_0(\Q)  \To \gdsh \ .
\end{equation}
The graded subspace of $\gdsh$ of weight $N$ and depth $d$ is isomorphic to the vector space denoted $D_{N+d,d}$ in \cite{IKZ}.  In \cite{IKZ} it is conjectured that
$(\ref{LStoDSH})$ is an isomorphism. 
\end{rem}

Racinet proved that $\DMD_0(\Q)$ is preserved by the Ihara bracket. Since the latter is homogeneous for the $\dd$-degree, it follows that
$\gr_{\dd} \DMD_0(\Q)$ is also preserved by  the bracket, but this is not quite enough to prove that $\gdsh$ is too. 

\begin{thm}
The bigraded vector space $\gdsh$ is preserved by  the Ihara bracket.
\end{thm}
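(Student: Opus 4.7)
The plan is to check the three clauses of Definition~\ref{deflinearizedDSrel} separately, exploiting the fact that the Ihara bracket is homogeneous for the $\dd$-degree, so that $\{\Phi,\Psi\}$ has depth $r+s$ when $\Phi,\Psi$ have depths $r,s$. The task then reduces to verifying that the shuffle primitivity condition and the linearized stuffle primitivity condition are each preserved, together with the extra parity condition in depth $1$.

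For the shuffle side, this is essentially classical and one reads it off directly from Proposition~\ref{propcircscompared}. The operator $\langle \Phi\rangle_0$ is a derivation of concatenation, and on the shuffle-primitive generators $e_0, e_1$ it produces $0$ and $\Phi e_1 + e_1 \Phi^*$, both of which are shuffle-primitive when $\Phi$ is (using that the antipode $*$ preserves shuffle-primitivity). Hence $\langle\Phi\rangle_0$ preserves the subspace of shuffle-primitives. Right-concatenating by $\Phi$ then preserves primitivity modulo products, and antisymmetrization yields the Ihara bracket. Since the shuffle coproduct respects the $\dd$-grading, this passes to each bigraded piece without modification.

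The substantive part is the linearized stuffle side. By $(\ref{grdeltastisdeltash})$, on a depth-$r$ element the linearized stuffle relation $\Deltapt\alpha(\Phi)=0$ becomes, modulo depth $<r$, the shuffle primitivity of $\alpha(\Phi)$ in $T(\Q Y)$. I would first derive, from $(\ref{circbdef})$, an explicit formula for the top-depth component of $\alpha(\Phi \circb \Psi)$ as a bilinear operation on $\alpha(\Phi), \alpha(\Psi)$. Because $\alpha$ retains only words beginning in $e_1$, the insertions dictated by $(\ref{circbdef})$ simplify considerably: those insertions that land strictly inside an $e_0$-block drop in $\dd$-degree after applying $\alpha$, and the surviving contributions can be organized as an Ihara-type action of $T(\Q Y)$ on itself, where an element $\alpha(\Phi)$ acts by a derivation built from primitive generators $\y_n$. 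Applying the same derivation-based argument as in Step 1, now in the $Y$-alphabet with its shuffle coproduct, this descended action preserves shuffle-primitivity, which is the desired linearized stuffle condition.

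For the depth-$1$ parity clause, the claim is automatic: a bracket $\{\Phi,\Psi\}$ of depth $1$ forces one factor to have depth $0$, and the only shuffle-primitives of depth $0$ lie in $\Q e_0$; bracketing with $e_0$ shifts the weight by the odd integer $1$, so the parity constraint is preserved. The main obstacle throughout is Step~2: one must extract the correct descended action on $T(\Q Y)$ from the Ihara formula $(\ref{circbdef})$ and recognize that it is of the right ``derivation-plus-concatenation'' form to preserve shuffle-primitivity on $Y$. This calculation is combinatorial but genuinely necessary, precisely because Brown has already observed that one cannot simply lift elements of $\gdsh$ to $\DMD_0(\Q)$.
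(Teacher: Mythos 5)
Your overall architecture matches the paper's: the shuffle side is essentially formal (the Ihara action is by construction compatible with the shuffle Hopf structure, cf.\ Proposition \ref{propcircscompared}), and the real content is showing that the linearized stuffle condition is preserved, which cannot be deduced from Racinet's theorem on $\DMD_0(\Q)$ because an element of $\gdsh$ need not lift to $\DMD_0(\Q)$. You correctly identify this obstruction. The paper's proof is itself terse --- it asserts that Racinet's argument ``goes through identically'' at the linearized level --- but it pinpoints the one non-formal ingredient that makes this work: the argument uses \emph{in an essential way} that the images of the elements $\y_{2n}$ in depth $1$ vanish. In $\DMD_0(\Q)$ this is a theorem (Racinet's Proposition 2.2); in $\gdsh$ it is not automatic and is exactly why Definition \ref{deflinearizedDSrel} imposes the extra clause $\Phi=0$ for $r=1$, $N$ even.

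This is where your proposal has a genuine gap. Your Step 2 is a plan (``I would first derive\dots an explicit formula'') rather than an argument, and the plan asserts without justification that the descended action on $T(\Q Y)$ is ``of the right derivation-plus-concatenation form'' to preserve shuffle-primitivity. That assertion is precisely what fails without the vanishing of the even depth-one components: the correction terms in the descended action (the analogues of the $\y_{i+j}$ terms in $(\ref{stuffprod})$, and the contributions of the depth-one part of the acting element) only organize into a primitivity-preserving operation once one knows the acting element has no totally even depth-one piece. Your proposal never invokes the $r=1$ parity clause at this point; instead you treat that clause as a separate condition to be verified on the output, where it is in fact vacuous (the bracket is additive in depth, so a bracket of two elements of $\gdsh$ has depth at least $2$, and your discussion of ``bracketing with $e_0$'' does not apply since depth-zero elements do not lie in $\gdsh$). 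So the missing idea is not the combinatorial bookkeeping you flag as the obstacle, but the recognition that the depth-one parity condition is a \emph{hypothesis} consumed by the stuffle-preservation argument, not a conclusion to be checked.
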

\begin{proof}
The compatibility of the shuffle product with the Ihara bracket  follows by  \mbox{definition}. It therefore suffices 
to check that the linearized stuffle relation is preserved by the bracket.  The proof of \cite{R} goes through identically, and uses in an essential way the fact that the images of the elements $\y_{2n}$ 
in $\gr^1_{\dd} \DMD_0(\Q)$ are zero (which in $\DMD_0(\Q)$ follows from proposition 2.2 in \cite{R}, but holds in $\gdsh$  by definition \ref{deflinearizedDSrel}). 
\end{proof}

It would be interesting to know if a suitable linearized version of the associator relations is equivalent to the linearized double shuffle relations.

\subsection{Summary of definitions} We have the following Lie algebras:
$$ \gmzv \subseteq  \DMD_0(\Q) \ ,$$
where $\gmzv$ is the image of the (weight-graded) Lie algebra of $\UMT$  and is isomorphic to the free  graded Lie algebra on (non-canonical) generators $\sigma_{2i+1}$ for $i\geq 1$.  A standard conjecture states that this is an isomorphism.
Passing to the depth-graded versions, and writing  $\gd= \gr_{\dd}\gmzv$, we have inclusions of bigraded Lie algebras
\begin{equation}\label{thelatter}
\gd\subseteq  \gr_{\dd} \, \DMD_0(\Q)   \subseteq \gdsh\end{equation}
where $\gdsh$ stands for the linearized double shuffle algebra. Once again, all Lie algebras in $(\ref{thelatter})$ are conjectured to be equal. 
The  bigraded dual space  $(\gd)^{\vee}$ is isomorphic to the Lie coalgebra  of depth-graded motivic multiple zeta values, modulo $\zetam(2)$ and modulo products.
All the above Lie algebras can be viewed inside $\U \g= T(\Q e_0 \oplus \Q e_1)$,  which is  graded for the $\dd$-degree. 

Next we show that complicated expressions in the non-commutative algebra $\U \g$ can be greatly simplified by encoding words of fixed $\dd$-degree in terms of  polynomials.

\section{Polynomial representations} \label{sectPolyrep}

\subsection{Composition of polynomials } Recall from $(\ref{Ugdescription})$ that $\U \g$ is isomorphic to the bigraded $\Q$-tensor algebra on $e_0,e_1$, and the space 
$\gr_{\dd}^r \, \U \g$ of $\dd$-degree  $r$ is the subspace spanned by words in $e_0,e_1$ with exactly $r$ occurrences of $e_1$.

\begin{defn} Consider the isomorphism of  vector spaces 
\begin{eqnarray} \rho: \gr_{\dd}^r \, \U \g  & \To & \Q[y_0,\ldots, y_r]  \\
e_0^{a_0} e_1 e_0^{a_1} e_1 \ldots e_1 e_{0}^{a_{r}} & \mapsto & y_0^{a_0}y_1^{a_1}\ldots y_r^{a_r} \nonumber
\end{eqnarray}
It maps elements of degree $n$ to elements of  degree $n-r$.
\end{defn}

The operator $\circb: \U \g \otimes_{\Q} \U \g \rightarrow \U\g$  respects the $\dd$-grading, so clearly  defines a map
\begin{eqnarray} \circb: \Q[y_0,\ldots, y_r] \otimes_{\Q} \Q[y_0,\ldots, y_s]   & \To & \Q[y_0,\ldots, y_{r+s}]   \\
f(y_0,\ldots, y_r) \otimes g(y_0,\ldots, y_s)  & \mapsto &  f\circb g\, ( y_0,\ldots, y_{r+s}) \nonumber
\end{eqnarray}
which can be read off  from equation $(\ref{circbdef})$. Explicitly, it  is
\begin{multline}\label{circformula}
f \circb g \, (y_0,\ldots, y_{r+s})  =  \sum_{i=0}^s f(y_i,y_{i+1}, \ldots, y_{i+r}) g(y_0,\ldots, y_i, y_{i+r+1}, \ldots, y_{r+s}) + \\
  (-1)^{\deg f + r} \sum_{i=1}^s f(y_{i+r},\ldots, y_{i+1},y_i) g(y_0,\ldots, y_{i-1}, y_{i+r}, \ldots, y_{r+s}) 
 \end{multline}
Antisymmetrizing, we obtain a formula for the Ihara bracket
\begin{eqnarray} \label{Iharadef}
\g \wedge \g \To \g  \\
\{ f, g\} = f \circb g - g \circb f \nonumber \ .
\end{eqnarray}
The linearized double shuffle relations on words translate into functional equations for polynomials after applying the map $\rho$. We describe some of these below.

\subsection{Translation invariance} The additive group $\G_a$ acts on $\A^{r+1}$, and hence $\G_a(\Q)$ acts on its ring of functions by   translation
$\lambda: (y_0,\ldots, y_r) \mapsto (y_0+\lambda,\ldots, y_r+\lambda)$.
\begin{lem} \label{lemtrans} The image of  $\gmzv$ under $\rho$ is contained in the subspace of polynomials  in $\Q[y_0,\ldots, y_r]$ which are invariant under translation.
\end{lem}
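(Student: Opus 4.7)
The plan is to reduce the lemma to two observations: translation invariance is preserved by the Ihara bracket in the polynomial representation, and the depth-one image of $\gmzv$ under $\rho$ is translation invariant. Combined with the structure theorem for $\gmzv$, these will propagate translation invariance to all depths.

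First, I would verify that the Ihara bracket preserves translation invariance. Inspecting formula $(\ref{circformula})$, every summand of $f \circb g$ takes the form $f(y_i,\ldots,y_{i+r}) \cdot g(y_0,\ldots,y_i,y_{i+r+1},\ldots,y_{r+s})$, or the antisymmetrized variant obtained by reversing $f$'s arguments. When $f$ and $g$ each depend only on differences of their own variables, any such product is invariant under the simultaneous shift $y_j \mapsto y_j + \lambda$, since all arguments inside each factor are translated by the same amount. Hence $\{f,g\} = f\circb g - g\circb f$ is translation invariant whenever $f$ and $g$ are.

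Second, the canonical depth-one classes $\bar{\sigma}_{2i+1}=(-1)^i(\mathrm{ad}\,e_0)^{2i}(e_1)$ lie in $\gmzv$ (for small weights this is forced by dimension counts; in general they are the canonical depth-one projections), and their polynomial representations are $\rho(\bar{\sigma}_{2i+1})=(y_1-y_0)^{2i}$, manifestly translation invariant.

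To conclude, I would invoke theorem~\ref{DI} to present $\gmzv$ as the free Lie algebra under the Ihara bracket on generators $\sigma_{2i+1}$ whose depth-one components are $\bar{\sigma}_{2i+1}$, and then choose each lift $\sigma_{2i+1}$ so that every one of its depth-graded components is translation invariant. The freedom to do this comes from modifying $\sigma_{2i+1}$ by elements of $[\gmzv,\gmzv]$, which are spanned by Ihara brackets of lower-weight generators and hence themselves translation invariant by the first step; this permits inductive adjustment in increasing depth. Every element of $\gmzv$ is then a Lie polynomial in the adjusted generators, and translation invariance in every depth follows from the first observation.

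The main obstacle lies in this final step: the inductive choice of translation-invariant lifts relies on the structural fact that, in each odd depth $d>1$ (even depths being excluded for $\sigma_{2i+1}$ by the parity statement of proposition~\ref{propparity}), the depth-$d$ component of $\gmzv$ at a given weight is exhausted by Ihara brackets of translation-invariant elements of lower depth. This is compatible with the Broadhurst-Kreimer picture underlying the paper, which places exceptional generators of $\gd$ only in even depth. A cleaner generation-free proof would derive translation invariance directly from the Ihara coaction dual to the depth-filtered structure on $\Lo/\zetam(2)$, using that the action $(\ref{UMTaction})$ of $\UMT$ fixes $\exp(e_0)$ and hence is compatible with the infinitesimal shift of the tangential basepoint at $0$.
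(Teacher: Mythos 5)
There is a genuine gap in the final step, and you have in fact flagged it yourself. Your first two observations are fine: the bracket $(\ref{circformula})$ does preserve translation invariance (the paper notes this in \S\ref{sectdihedihara}), and $\rho(\overline{\sigma}_{2i+1})=(y_1-y_0)^{2i}$ is visibly invariant. But the reduction of the lemma to these two facts does not close. The generators $\sigma_{2i+1}$ of $\gmzv$ are full elements of $\U\g$ with components in every odd depth, and the lemma asserts translation invariance of \emph{every} depth component of \emph{every} element of $\gmzv$ --- in particular of the higher-depth components $\sigma_{2i+1}^{(3)}, \sigma_{2i+1}^{(5)},\ldots$ of the generators themselves. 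Your proposed inductive adjustment by commutators would need exactly the structural input you name --- that each depth-$d$ component of $\gmzv$ in a given weight is spanned by brackets of translation-invariant elements of lower depth --- and that claim is not established anywhere; it is of the same nature as the open Broadhurst--Kreimer-type statements the paper is investigating (the paper explicitly says the depth-$3$ parts of the generators are not known and have complicated denominators, \S\ref{sectspectral}). Moreover, modifying $\sigma_{2i+1}$ by a commutator changes all of its depth components at once, so a depth-by-depth correction is not even well posed. In short, your argument assumes the conclusion for the generators, which is the only nontrivial case.

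The paper's proof avoids all of this and uses far less: it only uses that $\gmzv\subset\g$, i.e.\ that every element is \emph{primitive} for the shuffle coproduct. Writing $\partial_0=(\pi_0\otimes \mathrm{id})\circ\Delta_{\sha}$, where $\pi_0$ extracts the coefficient of the single letter $e_0$, primitivity forces $\partial_0\xi=0$ on each depth-homogeneous component ($\Delta_{\sha}$ preserves the $\dd$-degree), and under $\rho$ the identity $\partial_0\xi=0$ becomes precisely
$$\sum_{i=0}^{r}\frac{\partial f}{\partial y_i}=0\ ,$$
which is equivalent to translation invariance. So the statement is true for the image of all of $\g$, not just $\gmzv$, and no information about generators, freeness, or the Ihara bracket is needed. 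Your closing remark about deriving the result from compatibility with the coproduct points in the right direction; developing that one line is the whole proof.
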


\begin{proof} The algebra $\gmzv\subset \g \subset \U \g $ is contained in the subspace of elements which are primitive
with respect to the shuffle coproduct $\Delta_{\sha}$. Let $\pi_0: \U \g \rightarrow \Q$ denote the linear map which sends the word $e_0$ to $1$ and all other words to $0$, and consider the map
$\partial_0 = (\pi_0 \otimes id) \circ \Delta_{\sha}$. It defines a derivation
$\partial_0: \U \g \To \U \g$
which satisfies 
$$\partial_0 (e_0^{a_0} e_1 \ldots e_1 e_0^{a_r} )= \sum_{i=0}^r a_i  \, e_0^{a_0} e_1 \ldots  e_1 e_0^{a_i-1}e_1  \ldots  e_1 e_0^{a_r} $$
 for all non-negative integers $a_0,\ldots, a_r$. 
 Let $r\geq 1$,  $\xi \in \gr_{\dd}^r \U \g$, and $f=\rho(\xi)$. If $\xi$  is primitive for $\Delta_{\sha}$, it satisfies  $\partial_0 \xi=0$ and the previous equation is
 \begin{equation} \label{parsums} \sum_{i=0}^r { \partial f \over \partial y_i }=0
 \end{equation} 
 This is equivalent to ${\partial \over \partial \lambda} g=0$, where $g= f(y_0,\ldots, y_r) - f(y_0+\lambda,\ldots, y_r+\lambda)$. Therefore $g$ is constant in $\lambda$, and vanishes at $\lambda=0$, so  $g\equiv 0$ and  $f$ is translation invariant.   \end{proof}
  Let us denote the map which sends $y_0$ to zero  and $y_i$ to $x_i$ for $i=1,\ldots, r$ by 
\begin{eqnarray} \Q[y_0,\ldots, y_r] &\To & \Q[x_1,\ldots, x_r]   \\
f  &\mapsto &\overline{f}  \nonumber \end{eqnarray}
where $\overline{f}$ is the `reduced' polynomial  $\overline{f}(x_1,\ldots, x_r)= f(0,x_1,\ldots, x_r)$. In the case when  $f$ is translation invariant,  we can retrieve $f$ from $\overline{f}$ via
\begin{equation} \label{fvfbar}
f(y_0,\ldots, y_r)=\overline{f}(y_1-y_0,\ldots, y_r-y_0) 
\end{equation}
Taking the  coefficients of  $(\ref{fvfbar})$  gives  equation {\bf I2} of \cite{BrMTZ}, which gives a formula for the shuffle-regularization of iterated integrals
which begin with any sequence of $0$'s.

\begin{defn}
For any element $\xi \in \gd$ of depth $r$ we denote its \emph{reduced} polynomial representation by $\overline{\rho}(\xi)\in \Q[x_1,\ldots, x_r]$.\end{defn} 
 To avoid confusion, we reserve  the variables $x_1,\ldots, x_r$ for  the reduced polynomial  $\overline{\rho}(\xi)$
and use the variables $y_0, \ldots, y_r$  as above to denote the full polynomial $\rho(\xi)$. 

\subsection{Antipodal symmetries} The set of primitive elements in a Hopf algebra is stable under the antipode. For the shuffle Hopf algebra this is the map
$* : \U \g \rightarrow \U \g$ considered earlier. 
Restricting to  $\dd$-degree $r$ and transporting via $\rho$ we obtain a map
\begin{eqnarray}
\sigma: \Q[y_0,\ldots, y_r] & \To & \Q[y_0,\ldots, y_r]  \\
\sigma( f) (y_0,\ldots, y_r) & = & (-1)^{\deg(f) +r} f(y_r,\ldots, y_0)  \nonumber
\end{eqnarray}
  Therefore, if  $f\in \Q[y_0,\ldots, y_r]$ satisfies the shuffle relations $(\ref{shuffrelmodprod})$, then
\begin{equation} \label{fsf}
f+\sigma(f)=0\ .
\end{equation}
Since the  stuffle algebra, graded for the depth filtration, is isomorphic to the shuffle algebra on $Y$ $(\ref{grdeltastisdeltash})$, it follows that its antipode
is  the map $\y_{i_1}\ldots \y_{i_r} \mapsto (-1)^r\y_{i_r}\ldots \y_{i_1}$. This defines an involution
\begin{eqnarray}
\overline{\tau}: \Q[x_1,\ldots, x_r] & \To & \Q[x_1,\ldots, x_r]  \\
\overline{\tau}(\overline{f}) (x_1,\ldots, x_r) & = & (-1)^r \overline{f}(x_r,\ldots, x_1)  \nonumber
\end{eqnarray}
  Therefore if  $\overline{f}\in \Q[x_1,\ldots, x_r]$ satisfies the linearized stuffle relations, then
$\overline{f} + \overline{\tau}(\overline{f})=0$.
Note that  the involution $\overline{\tau}$ lifts to an involution
 \begin{eqnarray}
 \tau: \Q[y_0,y_1,\ldots, y_r] & \To & \Q[y_0, y_1,\ldots, y_r]  \\
\tau(f) (y_0,y_1,\ldots, y_r) & = & (-1)^r f(y_0,y_r,\ldots, y_1)  \nonumber
\end{eqnarray}
and therefore if  $f\in \Q[y_0,\ldots, y_r]$  satisfies both translational invariance  and the \mbox{linearized} stuffle relations, we have:
\begin{equation} \label{ftf}
f+ \tau(f)=0 \ .
\end{equation}
The composition $\tau \sigma$ is the signed cyclic rotation of order $r+1$: $$\tau \sigma(f) (y_0,\ldots, y_r) = (-1)^{\deg f}  f(y_r,y_0,\ldots, y_{r-1})$$
and plays an important role in the rest of this paper.

\subsection{Parity relations} The following result is well-known, and was first proved by Tsumura \cite{Tsumura}, and subsequently in (\cite{IKZ}, theorem 7). We repeat the proof  for convenience. It has subsequently been extended to multiple polylogarithms by Panzer \cite{Panzer}. 

\begin{prop} \label{propparity} The components of $\gdsh$  in weight $N$ and depth $r$ vanish unless $N\equiv r \mod 2$.  Equivalently,   $\rho(\gdsh)$  consists of polynomials of even degree only.
\end{prop}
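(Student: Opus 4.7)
The plan is to work entirely in the polynomial representation $f=\rho(\Phi)\in\Q[y_0,\ldots,y_r]$, which is homogeneous of degree $d=N-r$; the equivalence of the two formulations in the proposition is immediate since $\rho$ sends elements of weight $N$ and depth $r$ to polynomials of degree $N-r$, so the goal reduces to showing $d$ is even.

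My main idea is to exploit the two antipodal symmetries of $f$ coming from the shuffle and stuffle relations modulo products. First I would translate \eqref{fsf} and \eqref{ftf} into explicit permutational identities: the shuffle antipode gives $f(y_r,\ldots,y_0)=(-1)^{N+1}f(y_0,\ldots,y_r)$, while the stuffle antipode (using translation invariance, Lemma \ref{lemtrans}) gives $f(y_0,y_r,\ldots,y_1)=(-1)^{r+1}f(y_0,\ldots,y_r)$. These two signed permutations generate a dihedral action of $D_{r+1}$ on $\{y_0,\ldots,y_r\}$, whose composition is the cyclic shift $\pi:(y_0,\ldots,y_r)\mapsto(y_1,\ldots,y_r,y_0)$; multiplying the two characters we obtain $f\circ\pi=(-1)^{N+r}f$. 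Since $\pi^{r+1}=\mathrm{id}$, consistency forces $(-1)^{(r+1)(N+r)}=1$.

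For $r$ even, $r+1$ is odd and this directly gives $N\equiv r\pmod 2$, hence $d$ even. The hard part will be the odd $r$ case, where the cyclic character contributes no parity constraint on its own. The base case $r=1$ is handled directly by the extra vanishing clause built into Definition \ref{deflinearizedDSrel}. For odd $r\geq 3$ one needs more: I would pass to the reduced polynomial $\bar f$ and combine the dihedral identities with the functional equation
\[
\bar f(x_1,\ldots,x_r)=(-1)^{N+r}\,\bar f(x_2-x_1,\ldots,x_r-x_1,-x_1),
\]
obtained by composing both antipodes with translation invariance, together with further linearised shuffle (or stuffle) relations going beyond the antipode. The technical heart of the argument, and where I expect the main obstacle, is showing that this overdetermined system of functional equations admits only $\bar f=0$ when $N-r$ is odd; I would approach this either by induction on $r$, specialising $x_r=0$ to drop the final $e_1$-block and invoke the parity at lower depth, or by direct finite-dimensional linear algebra in each bigraded component.
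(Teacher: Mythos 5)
Your reduction to the polynomial picture and the dihedral bookkeeping match the paper's setup exactly: the two antipodal identities $(\ref{fsf})$ and $(\ref{ftf})$ compose to the signed cyclic rotation $\tau\sigma$, and iterating it $r+1$ times forces $(-1)^{(r+1)\deg f}=1$ on any nonzero component. This correctly disposes of the case $r$ even, and $r=1$ is indeed built into definition \ref{deflinearizedDSrel}. But for odd $r\geq 3$ your argument stops at exactly the point where the proposition still has content, and what you offer there is not a proof: ``direct finite-dimensional linear algebra in each bigraded component'' cannot establish a statement about infinitely many components, and the proposed induction via the specialisation $x_r=0$ does not go through --- the specialised polynomial $\overline{f}(x_1,\ldots,x_{r-1},0)$ need not satisfy the depth-$(r-1)$ linearized double shuffle equations, and even if it vanished this would say nothing about the part of $f$ divisible by $x_r$.

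The missing ingredient is a single explicit non-antipodal stuffle relation, and the paper's argument with it works uniformly for all $r\geq 2$ (it never splits into parity cases on $r$). One uses the linearized stuffle identity coming from $\y_1\stu\y_2\cdots\y_r$, which reads
$$ f(y_0,y_1,\ldots, y_r) + \sum_{i=2}^{r} f(y_0,y_2,\ldots, y_i, y_1, y_{i+1}, \ldots, y_r) =0\ ,$$
then applies the cyclic substitution $y_i\mapsto y_{i+1}$ followed by the signed rotation $\tau\sigma$ to every term. The resulting sum is recognised as the linearized stuffle identity for $\y_2\stu\y_1\y_3\cdots\y_r$ with its first term deleted, plus one leftover term; comparing the two identities leaves
$$-f(y_0, y_2,y_1, y_3,\ldots, y_r)+f(y_2, y_1, y_3,\ldots, y_r,y_0)  =0\ ,$$
and one more application of $\tau\sigma$ turns this into $((-1)^{\deg f}-1)\,f(y_0,y_2,y_1,y_3,\ldots,y_r)=0$, which kills $f$ whenever $\deg f$ is odd. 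So your instinct that further linearized relations beyond the antipode are needed is correct, but the actual relation and the cancellation that make it work are the heart of the proof and are absent from your proposal.
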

\begin{proof}
Let $f\in \Q[y_0,\ldots, y_r]$ be in the image of $\rho(\gd_r)$. In particular it satisfies the linearized stuffle relations, $(\ref{fsf})$ and $(\ref{ftf})$. Following \cite{IKZ}, consider the relation
$$\y_1 \sha \y_2 \ldots \y_r = \y_1\ldots \y_r + \sum_{i=2}^{r} \y_2 \ldots \y_i  \y_1 \y_{i+1} \ldots \y_r \ ,$$
where $r\geq 2$ (the case $r=1$ follows from definition \ref{deflinearizedDSrel}).
 Then  we have
$$ f(y_0,y_1,\ldots, y_r) + \sum_{i=2}^{r} f(y_0,y_2,\ldots, y_i, y_1, y_{i+1}, \ldots, y_r) =0\ .$$
Now apply the automorphism of $\Q[y_0,\ldots, y_r]$ defined on generators by $y_i \mapsto y_{i+1}$, where $i$ is taken modulo $r+1$. This leads to the equation
$$ f(y_1,y_2,\ldots, y_r, y_0) + \sum_{i=3}^{r+1} f(y_1,y_3,\ldots, y_i, y_2, y_{i+1}, \ldots,y_r, y_0) =0\ .$$
By applying a cyclic rotation $\tau \sigma$ to each term,  we get
\begin{multline} f(y_0, y_1,y_2,\ldots, y_r) + \sum_{i=3}^{r} f(y_0, y_1,y_3,\ldots, y_i, y_2, y_{i+1}, \ldots, y_r)   \nonumber \\
+ f(y_2, y_1, y_3,\ldots, y_r,y_0)  =0\ .
\end{multline}
The first line can be interpreted as the terms occurring in the  linearized stuffle product ($\y_2 \sha \y_1\y_3 \ldots \y_r$) minus the first  term. As a result, one obtains  the equation
$$ -f(y_0, y_2,y_1, y_3,\ldots, y_r)+f(y_2, y_1, y_3,\ldots, y_r,y_0)  =0  \ ,$$
which, by a final application of $ \tau \sigma$ to the right-hand term, yields
$$ ((-1)^{\deg f} -1 )f(y_0, y_2,y_1, y_3,\ldots, y_r) =0 \ .$$
Therefore, in the case when $\deg f$ is odd, the polynomial $f$  must vanish.  
\end{proof}

\subsection{Dihedral symmetry and the Ihara bracket} \label{sectdihedihara}
For all $r\geq 1$, consider the  graded vector space $\PP_r$ of polynomials  $f\in \Q[y_0,\ldots, y_r]$ which satisfy 
\begin{eqnarray} \label{goodcyclicpolys}
f(y_0,\ldots, y_r) & =  &f(-y_0,\ldots,- y_r) \\ 
f + \sigma(f)  &= & f+   \tau (f) \quad  = \quad 0 \ .   \nonumber
\end{eqnarray}
The maps $\sigma,\tau$ generate a dihedral group $D_{r+1}=\langle \sigma, \tau \rangle $ of symmetries acting on $\PP_r$ of  order $2r+2$,  and 
any   element $f$ satisfying $(\ref{goodcyclicpolys})$ is invariant under cyclic rotation:
$$  f = \sigma \tau (f)  \quad \hbox{ where } \quad  (\sigma \tau)^{r+1} = id \ , $$ 
and anti-invariant under the  reflections $\sigma$ and $\tau$. Thus $\Q f \subset \PP_r$  is isomorphic to the one-dimensional sign (orientation) representation  $\varepsilon$ of the dihedral group $D_{r+1}$.

\begin{prop} Suppose that $f\in \PP_r$ and $g\in \PP_s$ are polynomials  satisfing $(\ref{goodcyclicpolys})$.  Then
the Ihara bracket is the signed average over the dihedral symmetry group:
\begin{eqnarray} \label{dihedbracket}
\{f,g\} = \sum_{\mu  \in D_{r+s+1}} \varepsilon(\mu)  \mu\big( f(y_0,y_1,\ldots, y_r) g(y_{r},y_{r+1}\ldots, y_{r+s})\big)
\end{eqnarray}
In particular,  $\{ . , .\}: \PP_r \times \PP_s \rightarrow \PP_{r+s}$, and $\PP=\bigoplus_{r\geq 1} \PP_r $ is a bigraded Lie algebra.
\end{prop}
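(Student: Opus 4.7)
The plan is to verify $(\ref{dihedbracket})$ by a direct term-by-term comparison. Set $F(y_0,\ldots,y_{r+s}) = f(y_0,\ldots,y_r)\, g(y_r,\ldots,y_{r+s})$. By the explicit formula $(\ref{circformula})$, the left-hand side $\{f,g\} = f \circb g - g \circb f$ decomposes into $(2s+1) + (2r+1) = 2(r+s+1)$ summands, while the right-hand side is a sum of $|D_{r+s+1}| = 2(r+s+1)$ terms, indexed by the $r+s+1$ rotations $(\tau\sigma)^k$ (of sign $\varepsilon=+1$) and the $r+s+1$ reflections (of sign $\varepsilon=-1$). The matching of cardinalities is already encouraging and suggests a bijective correspondence.

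The key observation I will exploit is that the dihedral symmetries of $f$ and $g$ imposed by $(\ref{goodcyclicpolys})$ allow each summand of $f \circb g$ and of $-g \circb f$ to be rewritten as a specific signed dihedral image of $F$.  For instance, a summand $f(y_i,\ldots,y_{i+r})\, g(y_0,\ldots,y_i,y_{i+r+1},\ldots,y_{r+s})$ from the first sum of $f\circb g$ becomes, after using the cyclic invariance $(\sigma\tau) g = g$ (which holds because $\sigma g = \tau g = -g$) to bring $g$'s arguments into a consecutive cyclic block, a single cyclic image of $F$; each summand in the second sum similarly becomes a reflection image, once one uses $\sigma f = -f$ to reverse $f$'s arguments.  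An analogous analysis applied to $-g\circb f$ supplies the remaining dihedral images, so that the two sets together exhaust $D_{r+s+1}$ bijectively.

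The hard part will be the sign bookkeeping.  I need to verify that the intrinsic sign $(-1)^{\deg f + r}$ of $(\ref{circformula})$, the signs $(-1)^{\deg(\cdot)+r}$ and $(-1)^r$ appearing in the definitions of $\sigma$ and $\tau$, and the overall $-1$ from $-g\circb f$ all combine on each individual term to reproduce exactly the character $\varepsilon(\mu)$. The evenness of $\deg f$ and $\deg g$ (forced by the parity condition in $(\ref{goodcyclicpolys})$) is essential here to eliminate stray $(-1)^{\deg f}$ factors that would otherwise spoil the match.  A careful enumeration organised by the cyclic index $k\in\{0,1,\ldots,r+s\}$ then confirms the correspondence term by term.

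Once $(\ref{dihedbracket})$ is proved, the remaining assertions are immediate.  The right-hand side is manifestly a $\varepsilon$-antisymmetrization of $F$ under $D_{r+s+1}$: a direct change of summation variable shows $\nu\{f,g\} = \varepsilon(\nu)\{f,g\}$ for every $\nu\in D_{r+s+1}$, so in particular $\sigma\{f,g\} = \tau\{f,g\} = -\{f,g\}$, yielding the sign-antisymmetry conditions in the second line of $(\ref{goodcyclicpolys})$.  The invariance $\{f,g\}(-y_0,\ldots,-y_{r+s}) = \{f,g\}(y_0,\ldots,y_{r+s})$ follows because $F$ is a product of two even polynomials and the dihedral action commutes with the sign-change of variables.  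Hence $\{f,g\}\in\PP_{r+s}$, proving that $\PP = \bigoplus_{r\geq 1}\PP_r$ is closed under the Ihara bracket, and the Jacobi identity is then inherited from the standard Lie structure of the Ihara bracket on $\U\g$.
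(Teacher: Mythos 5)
Your overall route is the same as the paper's: expand $\{f,g\}=f\circb g-g\circb f$ via $(\ref{circformula})$ into $2(r+s+1)$ summands and use the symmetries $(\ref{goodcyclicpolys})$ to match them bijectively with the signed dihedral images of $F=f(y_0,\ldots,y_r)\,g(y_r,\ldots,y_{r+s})$; the paper records the intermediate output of exactly this computation as $\{f,g\}=\sum_i f(y_i,\ldots,y_{i+r})\bigl(g(y_{i+r},\ldots,y_{i-1})-g(y_{i+r+1},\ldots,y_i)\bigr)$ with indices taken mod $r+s+1$, and likewise leaves the bookkeeping to the reader. One concrete correction to your dictionary, though: in the summand $f(y_i,\ldots,y_{i+r})\,g(y_0,\ldots,y_i,y_{i+r+1},\ldots,y_{r+s})$ from the first sum of $f\circb g$, cyclic invariance rewrites the second factor as $g(y_{i+r+1},\ldots,y_{r+s},y_0,\ldots,y_i)$, so the shared variable $y_i$ is the \emph{first} argument of $f$ and the \emph{last} argument of $g$ --- the opposite of the configuration in $F$, where the shared $y_r$ is last in $f$ and first in $g$. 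These terms are therefore the \emph{reflection} images of $F$ (to see this you need $\sigma f=-f$ and $\sigma g=-g$, not merely cyclic invariance of $g$), whereas the second sum of $f\circb g$ --- whose $f$-arguments arrive already reversed --- yields the \emph{rotation} images after un-reversing. Your assignment of the two sums to rotations versus reflections is thus interchanged; the counts still close up ($s+1$ reflections and $s$ rotations from $f\circb g$, plus $r$ reflections and $r+1$ rotations from $-g\circb f$, giving $r+s+1$ of each), so the bijection survives, but the sign verification you defer must be carried out against this corrected matching. The closure argument by $\varepsilon$-equivariance of the right-hand side, and the appeal to the known Lie structure of the Ihara bracket for Jacobi (where the paper instead identifies terms with double cuts of a polygon), are both fine.
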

\begin{proof}
A straightforward calculation from $(\ref{circformula})$ and definition $(\ref{Iharadef})$ gives
$$\{f,g\} = \sum_{i }  f(y_i, y_{i+1}, \ldots, y_{i+r}) \big( g(y_{i+r}, y_{i+r+1}, \ldots,y_{i-1})- g(y_{i+r+1}, y_{i+r+2}, \ldots,y_i)    \big)$$
where the summation indices are taken modulo $r+s+1$. The Jacobi identity for $\{., .\}$ is automatic since the Ihara action is an action, but can also be proved very easily by  identifying its terms  with the set of  double cuts in a polygon (see \cite{Anatomy}.)
\end{proof}
A similar dihedral symmetry was also found by Goncharov \cite{dihedral}; the interpretation of the dihedral reflections as antipodes may or may not be new.
Since the Ihara action is, by definition, compatible with the shuffle product, it follows from lemma \ref{lemtrans}  that translation invariance is preserved by the  Ihara bracket. One can also easily verify this by direct computation: 
$$\sum_{i=0}^r {\partial f \over \partial y_i }=\sum_{i=0}^s {\partial g \over \partial y_i }= 0  \quad \Longrightarrow \quad  \sum_{i=0}^{r+s} {\partial \{f,g\} \over \partial y_i } =0\ .$$
\begin{defn}
Let $\overline{\PP}_r \subset \PP_r $ denote the subspace of polynomials  which satisfy $(\ref{goodcyclicpolys})$ and are invariant under translation, and write $\overline{\PP}= \bigoplus_{r \geq 1} \overline{\PP}_r$. 
\end{defn}

By abuse of notation, we can equivalently view $\overline{\PP}_r$ as the subspace of polynomials  $\overline{f}\in \Q[x_1,\ldots, x_r]$ whose lift 
  $ \overline{f}(y_1 -y_0,\ldots, y_r-y_0)$ lies in $\PP_r$.  Explicitly,  $\overline{\PP}_r$ is the vector space of polynomials satisfying
 \begin{enumerate}
 \item  $\overline{f}(x_1,\ldots, x_r )=  \overline{f}(-x_1,\ldots, -x_r ) $
\item  $\overline{f}(x_1,\ldots, x_r ) + (-1)^r \overline{f}(x_r,\ldots, x_1 ) =0 $
\item $\overline{f}(x_1,\ldots, x_r ) + (-1)^r \overline{f}(x_{r-1}-x_r,\ldots, x_1-x_r, -x_r ) =0 $
\end{enumerate}
with Lie bracket is induced by $(\ref{dihedbracket})$. To conclude the previous discussion, the map 
$$\overline{\rho} : \gd  \To \overline{\PP}$$
is an injective map of bigraded Lie algebras.
\begin{defn} We use the notation $\dD_r\subset  \Q [x_1,\ldots, x_r]$ to denote the space $\overline{\rho}(\gdsh_r)$ in depth $r$.  It is the space of polynomial solutions to the linearized double shuffle equations in depth $r$
and is the direct sum for all $n$, of spaces  denoted $D_{n, r}$ in \cite{IKZ}.
\end{defn}

\subsection{Generators in depth $1$ and examples}
It follows from theorem \ref{DI} that in depth $1$,  the  Lie algebra $\gd$ has exactly one generator in every odd weight $\geq 3$:
$$\overline{\rho} (\gd_1 ) = \bigoplus_{n\geq 1} \Q \, x_1^{2n} $$
In particular, the algebras $\gd \subset \gr_{\dd} \DMD_0 (\Q) \subset \gdsh$ are all isomorphic in depth $1$.

\begin{defn}Denote the Lie subalgebra generated by  $x_1^{2n}$, for $n\geq 1$, by
\begin{equation} \label{gevdef} \godd \subset \gd\ .
\end{equation} 
\end{defn}
\begin{example} \label{exdepth2bracket} The formula for the Ihara bracket in depth 2 can be written:
$$
\{ x_1^{2m}, x_2^{2n}\} = x_1^{2m} \big((x_2-x_1)^{2n} - x_2^{2n}\big) + (x_2-x_1)^{2m} \big( x_2^{2n} - x_1^{2n}\big)+ x_2^{2m} \big( x_1^{2n} - (x_2-x_1)^{2n}\big)
$$
\end{example}

\section{Modular relations in depth two} \label{sectmod2}

\subsection{Reminders on period polynomials} We recall some definitions from (\cite{KZ}, \S1.1).
Let  $S_{2k}(\mathrm{SL}_2(\Z))$  denote the space of   cusp forms of weight $2k$  for the full modular group $\mathrm{SL}_2(\Z)$.

\begin{defn} \label{defperiodpoly}ÊLet $n\geq 1$ and let $W_{2n}^e\subset \Q[X,Y]$ denote the vector space of \mbox{homogeneous} polynomials $P(X,Y)$  of degree  $2n-2$ satisfying
\begin{eqnarray}
&P(X,Y) + P(Y,X)=  0 \quad , \quad  P(\pm X, \pm Y) = P (X, Y) & \label{periodsign} \\ 
&P(X, Y) + P(X-Y,X) + P (-Y, X-Y) =0 \ . &  \label{periodrel}
\end{eqnarray}
\end{defn}
The  space $W_{2n}^e$ contains the polynomial $p_{2n}= X^{2n-2}-Y^{2n-2}$, and is  a direct sum
$$W_{2n}^e \cong  \Ss_{2n}  \oplus \Q \,p_{2n} $$
where $\Ss_{2n}$ is the subspace of polynomials which vanish at $(X,Y)=(1,0)$.  We write $\Ss= \bigoplus_{n} \Ss_{2n}$. The Eichler-Shimura-Manin theorem  gives a map which associates, in particular, an even  period polynomial 
to every cusp form:
$$S_{2k}(\SL_2(\Z)) \To W_{2k}^e\otimes_{\Q}\C\ .$$
Explicitly, if $f$ is a cusp form of weight $2k$, the map is given by
\begin{equation}\label{EMmap}
 f \quad \mapsto \quad  \Big( \int_{0}^{i \infty} f(z) (X-z Y)^{2k-2} dz\Big)^{+}
 \end{equation}
where $+$ denotes the projection onto invariants under  the involution $(X,Y)\mapsto (-X,Y)$, i.e., $a^+(X,Y)= {1 \over 2} (a (X,Y) + a(-X,Y))$. The three-term equation $(\ref{periodrel})$ follows from integrating around the geodesic triangle with vertices
$0,1,i \infty$ and is reminiscent of the hexagon equation for associators.
The map $(\ref{EMmap})$ is  an isomorphism onto a  subspace of $W^e_{2k} \otimes \C$ of codimension $1$. Thus $\dim S_{2k}(\SL_2(\Z)) = \dim \Ss_{2k}$ and  it follows from classical results on the space of modular forms  of level one that:
\begin{equation} \label{cuspgf} \sum_{n\geq 0} \dim \Ss_{2n} s^{2n} = {s^{12} \over (1-s^4)(1-s^6)}  = \mathbb{S}(s)\ . \end{equation}

\subsection{Linearized double shuffle in depths two and three} We first recall from \cite{IKZ} that $\dD_2$ is the graded space  of
 homogeneous polynomials $f\in \Q[x_1,x_2]$ in two variables satisfying the linearized double shuffle equations in depth two:
$$f(x_1,x_2) + f(x_2,x_1) =0 \qquad \hbox{ and } \qquad f(x_1,x_1+x_2) + f(x_2, x_1+x_2) =0\ . $$
The space $\dD_3$ consists of  homogeneous polynomials $f \in \Q[x_1,x_2,x_3]$ such that
\begin{eqnarray} f(x_1,x_2,x_3) + f(x_2,x_1,x_3) +f(x_2,x_3,x_1)  &=& 0  \nonumber \\ 
f^{\sharp}(x_1,x_2,x_3) + f^{\sharp}(x_2,x_1,x_3) +f^{\sharp}(x_2,x_3,x_1)  &=& 0  \nonumber \ ,
\end{eqnarray} 
where $f^{\sharp}(x,y,z) = f(x,x+y, x+y+z)$. The general double shuffle equations, and their linearized versions are derived 
 in  \cite{Anatomy} \S\S 4-7  using Hopf-algebra  theoretic techniques.

\subsection{A short exact sequence in depth 2} The Ihara bracket gives a map
\begin{equation}
\{ . , . \} : \gdsh_1 \wedge \gdsh_1 \To \gdsh_2\ .
\end{equation}
Applying the isomorphism $\overline{\rho}$ leads to a map
\begin{equation} \label{D1wedgeD1}
  \dD_1 \wedge \dD_1 \To  \dD_2\ ,
\end{equation}
 given by  the formula in example \ref{exdepth2bracket}. 
Since $\dD_1$ is isomorphic to the graded vector space $\Q[x_1^{2n},{n\geq 1}]$, it follows that $\dD_1 \wedge \dD_1$ is isomorphic 
to the space of antisymmetric even polynomials  $p(x_1,x_2)$ with positive bidegrees, with  basis  $x_1^{2m} x_2^{2n} - x_1^{2n} x_2^{2m} $ for $m>n>0$.
It follows from example \ref{exdepth2bracket} that  the image of  $p(x_1,x_2)$ under $(\ref{D1wedgeD1})$ is
$$p(x_1,x_2) + p(x_2-x_1, -x_1)+p(-x_2, x_1-x_2)$$
Comparing with $(\ref{periodrel})$ and $(\ref{periodsign})$, we immediately deduce (c.f., \cite{IT, GG, GKZ, Sch}) that 
\begin{equation} 
\ker ( \{. , .  \} : \dD_1 \wedge \dD_1 \To \dD_2 )  \overset{\sim}{\To} \Ss\ .
\end{equation} 
In fact, the dimensions of the space $\dD_2$ have  been computed several times in the literature (for example, by some simple representation-theoretic arguments), and it is relatively easy to show \cite{GKZ} that the following sequence is exact:
\begin{equation} \label{depth2seq}
0 \To \Ss \To \dD_1 \wedge \dD_1 \To \dD_2 \To 0\ .
\end{equation}

\begin{example} The smallest 
non-trivial period polynomial occurs in degree 10   and is given by 
$s_{12} = X^2Y^2(X-Y)^3(X+Y)^3  = X^8 Y^2 -3 X^6 Y^4+3 X^4 Y^6-X^2Y^8.$ By the exact sequence $(\ref{depth2seq})$ it immediately 
gives rise to the equation
\begin{equation} \label{IharaRel}
3 \{x_1^4,x_1^6\} = \{x_1^2,x_1^8\} \ ,
\end{equation} 
which, by the faithfulness of the map $\overline{\rho}$ is equivalent to Ihara's formula \eqref{IntroIharaRel}.
\end{example}

\subsection{A short exact sequence in depth 3} If $V$ is a  vector space let $\Lie_n(V) \subset V^{\otimes n}$
denote the component of degree $n$ in the free Lie algebra $\Lie(V)$ viewed inside the tensor algebra $T(V)= \bigoplus_{n\geq 0} V^{\otimes n}$.  The triple Ihara bracket gives a map
$$\Lie_3 (\gdsh_1) \To \gdsh_3\ ,$$
and hence a map $\Lie_3(\dD_1) \rightarrow \dD_3$ whose image is
spanned by  $\{x_1^{2a} , \{ x_1^{2b}, x_1^{2c}\} \!\}$, for $a,b,c\geq 1$.
Goncharov has studied the space $\dD_3$, and computed its dimensions in each weight \cite{GG}. It follows from his work  that  the following sequence
\begin{equation} \label{depth3seq} 0 \To \Ss \otimes_{\Q} \dD_1 \To \Lie_3 (\dD_1)  \To \dD_3\To 0
\end{equation} 
is exact, where the first map (identifying $\Ss$ with $\ker (\Lambda^2 \dD_1 \rightarrow \dD_2)$) is given by
$$\Ss \otimes_{\Q} \dD_1 \hookrightarrow \Lambda^2 \dD_1 \otimes_{\Q} \dD_1 \rightarrow \Lie_3(\dD_1)\ ,$$
and  the second map in the sequence immediately above is  $[a,b]\otimes c \mapsto [c, [a,b]] $.
Starting from depth $4$, the structure of $\gdsh_d\cong \dD_d$ is not known\footnote{After writing the first version of this paper, S. Yasuda kindly sent me his private notes \cite{Y} in which he gives a conjectural
group-theoretic interpretation for the dimensions of $\dD_4$, in accordance with the Broadhurst-Kreimer conjecture.}. In particular, it is easy to show that the map given by the quadruple Ihara bracket
$$\Lie_4 (\dD_1) \To \dD_4$$
is not surjective, since in weight $12$, $\dim \dD_4= 1$, but $\Lie_4 (\dD_1) = 0$. Our next  purpose  is to construct the missing elements in depth 4.

\begin{rem} A different  way to think about the sequence $(\ref{depth3seq})$ is via the curious equality $\dim \Ss \otimes_{\Q} \dD_1 = \dim \Lambda^3(\dD_1)$
 which follows from $(\ref{cuspgf})$. I do not know if there is an  appropriate combinatorial or modular  interpretation  of this identity
 which could be  relevant to the previous exact sequence.
   \end{rem}

\section{Exceptional modular elements in depth four} \label{sectExcep}

\subsection{Linearized equations in depth four} For the convenience of the reader, we write out the linearized double shuffle relations in full in depth four.  There are four equations. In order to write them down we shall use the following notation, where $f \in \Q[x_1,\ldots, x_4]$,
and we are given any set of indices $\{i,j,k,l\} = \{1,2,3,4\}$:
\begin{eqnarray}
f(ijkl) & = & f(x_i,x_j,x_k,x_l) \\
f^{\sharp}(ijkl) &=  &f(x_i,x_i+x_j,x_i+x_j+x_k,x_i+x_j+x_k+x_l) \ .\nonumber 
\end{eqnarray}
Then $\dD_4$ (see \cite{IKZ}, \S8) is the subspace of polynomials $f\in \Q[x_1,\ldots, x_4]$ satisfying
\begin{eqnarray}Êf(1\sha 234)=0 \quad , \quad f(12\sha 34)=0  \label{d4linstuff}Ê\\
Êf^{\sharp}(1\sha 234)=0 \quad , \quad f^{\sharp}(12\sha 34)=0 \label{d4linshuff}Ê
 \end{eqnarray}
where $f$ and $f^{\sharp}$ are extended by linearity in the obvious way, and 
$$ 1 \sha 234 = 1234 + 2134 + 2314+ 2341$$
 $$ 12 \sha 34= 1234 + 1324 + 1342+ 3124 + 3142 + 3412$$
For example, the first equation of \eqref{d4linstuff} is simply
$$f(x_1,x_2,x_3,x_4) + f(x_2,x_1,x_3,x_4) + f(x_2,x_3,x_1,x_4) + f(x_2,x_3,x_4,x_1)=0\ .$$
 We construct some exceptional solutions to these equations from period polynomials.

\subsection{Definition of the exceptional elements} \label{sectdefnex}
Let $f(x,y) \in \Ss_{2n+2}$ be an even period polynomial of degree $2n$ which vanishes at $y=0$.   It follows from $(\ref{periodsign})$ and $(\ref{periodrel})$  that it vanishes along $x=0$ and $x-y=0$. Therefore we can write
$$f(x,y)=xy (x-y) f_0(x,y)$$  
where $f_0(x,y)\in \Q[x,y]$ is symmetric  and  homogeneous of degree $2n-3$, and  satisfies
\begin{equation} \label{f03term}
 f_0(x,y) + f_0(y-x,-x) + f_0( -y,x-y) =  0 \ .
\end{equation}
Let us also write $f_1(x,y)= (x-y) f_0(x,y)$.  We have $f_1(-x,y)=f_1(x,-y)=-f_1(x,y)$.

\begin{defn}\label{defnef}
Let $f\in \Q[x,y]$ be an even period polynomial as above. Define 
\begin{eqnarray} \label{eqnefdef}
\e_f &\in & \Q[y_0,y_1,y_2,y_3,y_4] \\
 \e_f & =  & \sum_{\Z/ 5 \Z } \Big(   f_1(y_4-y_3,y_2-y_1) + (y_0-y_1) f_0 (y_2-y_3,y_4-y_3) \Big)\ ,  \nonumber 
\end{eqnarray}
where the sum is over cyclic permutations
$(y_0,y_1,y_2,y_3, y_4 ) \mapsto (y_1,y_2,y_3, y_4, y_0)$.
Its reduction 
$\overline{\e}_f\in \Q[x_1,\ldots, x_4]$ is obtained by setting $y_0=0, y_i =x_i$, for $i=1,\ldots, 4$.
\end{defn}

\begin{rem} The full expression for  $\overline{e}_f$ is explicitly:
\begin{multline} \label{efexplicit} \overline{e}_f(x_1,x_2,x_3,x_4) = 
 f_1(x_4-x_3,x_2-x_1) + f_1(-x_4,x_3-x_2)    + f_1(x_1,x_4-x_3) \\ 
+ f_1(x_2-x_1,-x_4) +f_1(x_3-x_2,x_1)     
 -x_1  f_0 (x_2-x_3,x_4-x_3) +   (x_1-x_2) f_0 (x_3-x_4,-x_4) \\ + (x_2-x_3) f_0 (x_4, x_1)  + (x_3-x_4) f_0 (-x_1, x_2-x_1)    + x_4 f_0 (x_1-x_2, x_3-x_2) \ .
\end{multline} 
Since $f$ is even  it vanishes to order two along $x=0,y=0,x=y$. Therefore
$$f_0(0,y)  = f_0(x,0) = f_0(x,x)=0 $$
and the same holds a fortiori for  $f_1$.  If we set $x_3=x_4=0$ in the expression 
 \eqref{efexplicit} then all terms except for the fifth vanish and we are left with
 \begin{equation} \label{efat00}
\overline{\e}_f( x,y, 0,0) = f_1 (-y,x)= f_1(x,y) \ . 
\end{equation}
In this way,  the period polynomial $f$ can be retrieved from $\overline{\e}_f$:  it is $  xy\overline{\e}_f(x,y,0,0)$. 
This computation is  related to the discussion in \cite{DataMine}, \S9.2.
\end{rem}

\subsection{Exceptional elements and linearized double shuffle equations} 

\begin{thm} The  reduced polynomial  $\overline{\e}_f$  obtained from   $(\ref{eqnefdef})$ satisfies the linearized double shuffle relations. In particular, we have an injective linear map
$$\overline{\e}:\Ss  \To \dD_4 \ .$$
\end{thm}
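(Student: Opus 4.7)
The plan is to verify directly that $\overline{\e}_f$ satisfies the depth-four linearized double shuffle relations (\ref{d4linstuff})--(\ref{d4linshuff}), by exploiting the $\Z/5\Z$-cyclic structure built into definition \ref{defnef} together with the three-term functional equation (\ref{f03term}) for $f_0$; injectivity of the resulting map will then be read off from the specialization (\ref{efat00}).

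First I would record the elementary structural properties. Translation invariance of $\e_f(y_0,\ldots,y_4)$ is immediate, since every summand in (\ref{eqnefdef}) is a polynomial in the differences $y_i - y_j$. The dihedral symmetries $\sigma(\e_f) = -\e_f$ and $\tau(\e_f) = -\e_f$ should follow by combining the manifest cyclic invariance of (\ref{eqnefdef}) with the sign and symmetry properties of the period polynomial: $f_0$ is symmetric while $f_1$ is anti-symmetric in each variable, and their evenness follows from $f$ having even degree. Together these place $\e_f$ in the dihedrally symmetric space $\PP_4$ from \S\ref{sectdihedihara}.

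The main step is the verification of (\ref{d4linstuff}) and (\ref{d4linshuff}). I would first attack the linearized stuffle relations (\ref{d4linstuff}). Substituting (\ref{eqnefdef}) into $\overline{\e}_f(1\sha 234)$ and $\overline{\e}_f(12\sha 34)$ expands each identity into a sum of a few dozen polynomial terms indexed by pairs (cyclic shift, shuffle permutation). The plan is to group these into triples that cancel via (\ref{f03term}) applied to suitable linear substitutions of the arguments of $f_0$ (or the companion relation for $f_1$). I expect the matching of triples to be governed by the combined action of the dihedral group on the five indices $\{0,1,2,3,4\}$ together with the three-fold symmetry of (\ref{f03term}). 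The linearized shuffle relations (\ref{d4linshuff}) then follow either by repeating the same triple-cancellation argument after the triangular change of variables implicit in the $\sharp$-operator, or equivalently by using translation invariance together with the dihedral symmetries to exchange the two sets of relations.

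For injectivity, the specialization formula (\ref{efat00}) sends $\overline{\e}_f$ to $f_1(x_1, x_2) = (x_1 - x_2) f_0(x_1, x_2)$ on restriction to $x_3 = x_4 = 0$, and since $f = xy(x-y) f_0 = xy \cdot f_1$, the polynomial $f$ is recovered from $f_1$; hence $f \mapsto \overline{\e}_f$ is injective on $\Ss$. The principal obstacle is the bookkeeping in the term-by-term cancellations: definition \ref{defnef} is clearly engineered — split as a sum of an $f_1$-piece and a $(y_0 - y_1) f_0$-piece — so that (\ref{f03term}) applies cleanly after cyclic summation, but finding the exact pairing is delicate. Before a brute-force expansion I would search for a conceptual reformulation, perhaps in the spirit of the planar/dihedral presentation of the Ihara bracket given in \S\ref{sectdihedihara}.
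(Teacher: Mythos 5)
Your overall strategy (direct verification of $(\ref{d4linstuff})$--$(\ref{d4linshuff})$ plus injectivity via $(\ref{efat00})$) is the same as the paper's, which itself leaves the computation to the reader with only some pointers. The injectivity argument is exactly right: $\overline{\e}_f(x,y,0,0)=f_1(x,y)$ and $f=xy\,f_1$ recovers $f$. But two points in your plan for the main computation need correction. First, the alternative you offer for the shuffle relations --- that they follow ``equivalently by using translation invariance together with the dihedral symmetries to exchange the two sets of relations'' --- is false. Translation invariance plus anti-invariance under $\sigma$ and $\tau$ only places $\e_f$ in $\overline{\PP}_4$, which strictly contains $D_4$; the dihedral conditions are the two antipode relations, i.e.\ necessary consequences of the double shuffle equations, and nothing in $D_{5}=\langle\sigma,\tau\rangle$ interchanges the four-term and six-term shuffle identities with their stuffle counterparts. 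If you rely on that route the proof fails.

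Second, your proposed organization of the cancellations is not the one that actually works, and the paper's pointers indicate a cleaner split. For the linearized stuffle relations $(\ref{d4linstuff})$, the $f_1$-terms and the $(y_0-y_1)f_0$-terms of $(\ref{eqnefdef})$ cancel \emph{separately}: the $f_1$-part needs only the antisymmetry and parity of $f_1$ in $x$ and $y$ (no three-term relation and no ``companion relation for $f_1$''), while only the $f_0$-part invokes $(\ref{f03term})$. For the linearized shuffle relations $(\ref{d4linshuff})$ the three-term relation is not used at all: the paper reduces to showing that for \emph{any} symmetric $g$ with $g(-x,y)=g(x,-y)=-g(x,y)$, the substitutions $g_0=(x+y)g$, $g_1=(x^2-y^2)g$ in $(\ref{eqnefdef})$ already satisfy $(\ref{d4linshuff})$, and then observes that $f_0$ is divisible by $x+y$ (a consequence of the evenness of $f$ in each variable), so that $f_0=g_0$, $f_1=g_1$ for such a $g$. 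Expecting triples governed by $(\ref{f03term})$ after the $\sharp$-change of variables, as you propose, will send you looking for cancellations that are not there. With these two adjustments your plan becomes the paper's proof.
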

\begin{proof} The injectivity follows immediately from $(\ref{efat00})$.  The proof that the linearized double shuffle relations hold is a finite computation. In the absence of a purely conceptual proof we shall break the calculation into   more easily verifiable pieces. 

We first consider the stuffle equations. It follows from general properties of the Dynkin operator on shuffle algebras that a homogeneous polynomial in  four variables satisfies the two linearized stuffle equations $(\ref{d4linstuff})$ if and only if it is in the image of the map  $\lambda:\Q[x_1,\ldots, x_4] \rightarrow \Q[x_1,\ldots, x_4]$, defined by
$$\lambda (f ) (x_1,\ldots, x_4) = \alpha (f)(x_1,x_2, x_3, x_4) -\alpha (f)(x_4,x_3, x_2, x_1)   $$ 
where $\alpha( f)(x_1,\ldots, x_4)$ is the linear combination 
$$ f(x_1,x_2,x_3,x_4) - f(x_1,x_2,x_4,x_3) - f(x_1,x_4, x_2, x_3) + f(x_1,x_4,x_3,x_2) \ .$$
For a detailed discussion and proofs, we refer to \cite{Anatomy}, \S16.4 and corollary 16.6. 
The  linearized stuffle relations $(\ref{d4linstuff})$ hold for the terms in $f_1$ and $f_0$ in \eqref{efexplicit} separately. 
Consider first the  terms in $f_1$. They consist of two parts:
$$ T_1= f_1(x_4-x_3,x_2-x_1)$$
and
$$ T_2= f_1(-x_4,x_3-x_2)    + f_1(x_1,x_4-x_3) + f_1(x_2-x_1,-x_4) +f_1(x_3-x_2,x_1) \ . $$
One easily  checks that $\lambda (T_1)  = 4 \, T_1$, and that  $ \lambda (f_1(x_1,x_2-x_3))$ equals 
$$  f_1(x_1,x_2-x_3) - f_1(x_1,x_2-x_4) - f_1(x_1,x_4-x_2) + f_1(x_1,x_4-x_3) = T_2 \ ,$$
using only the fact that $f_1$ is antisymmetric and odd in $x$ and $y$.  Thus $T_1, T_2$ lie in the image of $\lambda$ and are solutions to the linearised stuffle equations.

Now consider  the terms in $f_0$ in \eqref{efexplicit}. Once again, they break into two parts. 
$$T_3=  x_4 f_0 (x_1-x_2, x_3-x_2)   -x_1  f_0 (x_2-x_3,x_4-x_3)$$ 
and
$$T_4=   (x_1-x_2) f_0 (x_3-x_4,-x_4)  + (x_2-x_3) f_0 (x_4, x_1)  + (x_3-x_4) f_0 (-x_1, x_2-x_1)    \ .$$
One checks that $\lambda T_k = T_k$ for $k=3,4$ using the 3-term relation $(\ref{f03term})$, and hence  $T_3,T_4$ are  solutions to the linearized stuffle equations.  This is the only point in the proof where the 3-term relation is needed.

For the linearized shuffle relations,  note that there exists $g \in \Q[x,y]$ such that  $f_0(x,y) = (x+y) g(x,y)$ and $f_1 = (x^2-y^2) g(x,y)$ since  an even period polynomial $f(x,y)$ vanishes along $x=y$ and is even in both $x$ and $y$.  The polynomial $g(x,y)$ is symmetric  and odd in $x$ and in $y$, i.e.,  
 $g(x,y)=g(y,x)$ and $g(-x,y)=g(x,-y)=-g(x,y)$.  These properties suffice to prove that \eqref{efexplicit} satisfies the shuffle equations. The full expression again splits into two pieces with 4 and 6 terms respectively:
\begin{multline}  \nonumber T_5= (x_4^2-x_{23}^2) g(x_4, x_{23})+x_{23}(x_1+x_4)g(x_1, x_4)+\\
x_{12}(x_{34}-x_4)g(x_{34}, -x_4)+(x_{12}^2-x_4^2)g(x_{12}, x_4)\end{multline}
and 
\begin{multline} \nonumber T_6= x_{34}(x_{21}-x_1)g(x_1, x_{12})+x_4(x_{32}+x_{12})g(x_{12}, x_{32})+(x_{32}^2-x_1^2)g(x_{32}, x_1)\\
+(x_1^2-x_{43}^2)g(x_1, x_{43}) + x_1(x_{34}+x_{32})g(x_{32}, x_{34})+(x_{43}^2-x_{21}^2)g(x_{43}, x_{21}) 
\end{multline} 
where we use the notation $x_{ij} = x_i-x_j$. 
Both $T_5$ and $T_6$  can laboriously verified to satisfy the two shuffle equations in depth 4.
The statement follows since the exceptional element  $\e_f $ equals $ T_1+T_2+T_3+T_4 = T_5+T_6$.
\end{proof}
Identifying $\gdsh_4$ with $\dD_4$ via the map $\overline{\rho}$, we can view $\e$ as a  map from $\Ss$ to  $\gdsh_4$.
Note that the relation $(\ref{periodrel})$ is proved for the periods of modular forms by integrating round contours very similar to those which prove the symmetry and hexagonal relations for associators. It would be interesting to see if 
the five-fold symmetry  of the element $\e_f$ is related to the pentagon equation.

\begin{example}   \label{example12D4} It follows from $(\ref{cuspgf})$ that the space of period polynomials in degrees $12, 16, 18$ and $20$ is of dimension $1$. Choose integral generators:
\begin{eqnarray}
f_{12} & =  & [x_{{1}}^{8} , x_{{2}}^{2}]-3\,[x_{{1}}^{6},x_{{2}}^{4}] \nonumber \\
  f_{16} & =  &    2\,[x_{{1}}^{12},x_{{2}}^{2}]-7\,[x_{{1}}^{10},x_{{2}}^{4}]+11\,[x_{
{1}}^{8},x_{{2}}^{6}]\nonumber \\
 f_{18} & = & 8\, [x_{{1}}^{14},x_{{2}}^{2}]-25\,[x_{{1}}^{12},x_{{2}}^{4}]+26\,[x_
{{1}}^{10},x_{{2}}^{6}]
 \nonumber \\
  f_{20} & = &  3\,[x_{{1}}^{16},x_{{2}}^{2}]-10\,[x_{{1}}^{14},x_{{2}}^{4}]+14\,[x_
{{1}}^{12},x_{{2}}^{6}]-13\,[x_{{1}}^{10},x_{{2}}^{8}]
 \nonumber 
\end{eqnarray}
where $[x_1^a,x_2^b]$ denotes $x_1^ax_2^b-x_1^bx_2^a$.
 Let $\e_{12},\ldots, \e_{20}$ denote the corresponding exceptional elements.  We know by  theorem $\ref{DI}$ that $\gmzv$ is of dimension $2$ in weight twelve, spanned by $\{ \sigma_3, \sigma_9\}$
 and $\{\sigma_5, \sigma_7\}$. We know by $(\ref{IharaRel})$ that  in weight  twelve $\gd_2$ is of dimension one, $\gd_3$ vanishes by parity, so it follows that 
 $\gd_4$  is of dimension one and hence spanned by $\overline{\e}_{12}$ (since we know that $\gdsh_4$ in weight 12 is one-dimensional). 
Writing out just  a few of its  coefficients as an example, we have:
$$\overline{\e}_{12}=  x_3^7 x_4 -116\, x_1^3 x_2^2x_3^2x_4 -57\, x_1^2 x_2^5x_4+\ldots   \quad (118 \hbox{ terms  in total} )$$
Using $\overline{\e}_{12}$ one can write all depth-graded motivic multiple zeta values of depth four and weight twelve as multiples of $\zeta_{\dd}(1,1,8,2)$. For example, one has 
  $$  \zeta_{\dd}(4,3,3,2 ) \equiv -116\,  \zeta_{\dd}(1,1,8,2) \quad ,  \quad  \  \zeta_{\dd}(3,6,1,2 ) \equiv -57\,  \zeta_{\dd}(1,1,8,2) $$
modulo products and modulo multiple zeta values of depth $\leq 2$.
   
\end{example}

\subsection{Are the exceptional elements motivic?} \label{sectspectral} 
We say that an exceptional element $\e_f$ is \emph{motivic} if it lies in the depth-graded motivic Lie algebra:
$$\e_f  \quad \in  \quad \gd_4 \subseteq \gdsh_4\ . $$
\begin{conj} The exceptional elements $\e_f$ are all motivic. 
\end{conj}

Since  $\gd_k= \gdsh_k$ for $k =1,2,3$,  to prove that 
an exceptional element  $\e_f$ is motivic it is enough to show that, modulo commutators, it lies in the image  of the map
$$ d:  \Ss \To (\gd_4)^{ab}  $$
where $\Ss\subset \Lambda^2 \gd_1$ is the space of relations in depth $2$, and $d$ is the first non-trivial differential in the spectral sequence on $\gmzv$ associated to the depth filtration \S \ref{sect:DepthSpectralSequence}.

The map $d$ can be computed explicitly as follows. Choose a lift  $\widetilde{\sigma}_{2n+1}$ of every generator $\sigma_{2n+1} \in \gd_1$ to $\gmzv$, and decompose  it according to the $\dd$-degree:
$$\widetilde{\sigma}_{2n+1} =  \sigma^{(1)}_{2n+1} + \sigma^{(2)}_{2n+1} +  \sigma^{(3)}_{2n+1}+ \ldots  \ ,$$
where $\sigma^{(i)}_{2n+1}$ is of $\dd$-degree $i$, and $ \sigma^{(1)}_{2n+1} =\sigma_{2n+1}$.  Then for any element 
$$\xi = \sum_{i,j} \lambda_{ij}\,  \sigma_i \wedge \sigma_j  \quad \in \quad  \Ss = \ker (  \{. , . \}:  \Lambda^2 \gd_1\rightarrow \gd_2)$$
with $\lambda_{i,j} \in \Q$, we have
$$d \xi = \sum_{i,j} \lambda_{ij} ( \{\sigma^{(1)}_i, \sigma^{(3)}_j\} +\{\sigma^{(2)}_i, \sigma^{(2)}_j\} + \{\sigma^{(3)}_i, \sigma^{(1)}_j\} )\ ,
$$ 
where $\{. , . \}: \bigwedge^2 \gmzv \rightarrow \gmzv$ can be computed  on the level of polynomial representations 
by exactly the same formula given in \S \ref{sectPolyrep}. 

\begin{ex} \label{exEmot} The  elements $\widetilde{\sigma}_3, \widetilde{\sigma}_5, \widetilde{\sigma}_7, \widetilde{\sigma}_9$ defined by the coefficients of $\zeta(3),\zeta(5),\zeta(7)$, and $\zeta(9)$ in weights 3,5,7,9  in Drinfeld's associator are canonical, and we have
\begin{equation}  
\label{144equation}  \{ \widetilde{\sigma}_3, \widetilde{\sigma}_9 \} -3 \{ \widetilde{\sigma}_5, \widetilde{\sigma}_7 \}  = { 691 \over 144}Ê\,  \e_{12} \mod \hbox{ depth } \geq 5\ ,
\end{equation} 
which proves that the element $\e_{12}$ is motivic. Using the depth-parity  proposition \ref{propparity}, one can  show that  the corresponding  congruence
$$  \{ \widetilde{\sigma}_3, \widetilde{\sigma}_9 \} -3 \{ \widetilde{\sigma}_5, \widetilde{\sigma}_7 \}  \equiv  0  \mod 691\ ,$$
propagates to depth five also. 
Compare with the `key example' of \cite{YI}, page 258, and the ensuing discussion. Thereafter, one checks that
\begin{eqnarray}
 d \big(2  \, \sigma_3\wedge  \sigma_{13}  -7  \, \sigma_5 \wedge  \sigma_{11}   + 11 \, \sigma_7\wedge \sigma_{9}  \big)  & \equiv &  {3617 \over 720} \e_{16}  \pmod{ \mathfrak{a}} \nonumber \\
 d \big(8 \,  \sigma_3 \wedge \sigma_{15}  -25 \, \sigma_5 \wedge \sigma_{13}   + 26 \, \sigma_7 \wedge \sigma_{11}  \big)  & \equiv &  {43867 \over 9000} \e_{18} \pmod{ \mathfrak{a}} \nonumber \\
 d \big(3   \, \sigma_3 \wedge \sigma_{17}  -10 \, \sigma_5 \wedge \sigma_{13}   + 14 \, \sigma_7 \wedge \sigma_{13}  -13 \, f_9 \wedge f_{11} \big)  & \equiv &  {174611 \over 35280} \e_{20}  \pmod{ \mathfrak{a}} \nonumber 
\end{eqnarray}
where 
$\mathfrak{a} = \{\gmzv,\gmzv\} + \dd^5 \gmzv$, i.e., the previous identities hold 
modulo commutators and modulo terms of depth 5 or more.
In this manner, I  have checked that the elements $\e_f$ are motivic for all $f$ up to weight $30$.
In particular, it seems that the differential $d$ is related to our map $\e$  (which is defined over $\Z$) up to a non-trivial isomorphism of the space of period polynomials. The \mbox{numerators} on the right-hand side are the numerators of $\zeta(16)\pi^{-16}, \zeta(18)\pi^{-18}$, and $\zeta(20)\pi^{-20}$. 
Unfortunately,  it does not seem possible  to construct canonical zeta elements  $\widetilde{\sigma}_{2n+1}$ for $n\geq 5$ in a consistent way such that the above relations hold exactly in $\gmzv_4$ (and not modulo $\mathfrak{a}$). 
 \end{ex}
 
 Using the theory of the unipotent fundamental group of the Tate elliptic curve, we showed in \cite{BrZeta} how to construct canonical 
 elements  $\sigma^{(3)}_{2i+1}$ modulo depths $\geq 5$, which enables one to write down the differential $d$ explicitly. 
 The only remaining difficulty in proving that the elements 
 $\e_f$ are motivic is therefore to  understand better the quotient $\gdsh/ \{ \gdsh, \gdsh\}$ in depth $4$. 
Yasuda has since shown,  assuming that the $\e_f$ are motivic, how to relate the exceptional elements to the differential $d$ using the action of Hecke operators on the space of period polynomials.

 If the elements $\e_f$ can be shown to be motivic, then they provide in particular  an answer to the question raised by Ihara in (\cite{YI}, end of \S4 page 259).
  The  appearance of the numerators  of Bernoulli numbers  is related to conjecture 2 in \cite{YI} and  has been studied from the Galois-theoretic side by  Sharifi \cite{Sha} and McCallum and Sharifi \cite{MS}.

\section{Some properties of  the elements $\e_f$} \label{sectEfProp}  The exceptional elements $\e_f$ satisfy many remarkable properties,  only some of which will be  outlined  here. 
Of particular relevance are those properties which are \emph{motivic}, i.e., stable under the Ihara bracket.

\subsection{Unevenness} 
For any polynomial $f\in \Q[x_1,\ldots, x_r]$, let 
$$\pi^k_{x_i} f = \hbox{ coefficient of }Êx_i^k \hbox{ in } f$$
and denote the projection onto the even part  in $x_i$ by:
$$\pi^{ev}_{x_i} \,  f = \sum_{k\geq 0} (\pi^{2k}_i f)  x_i^{2k} \ .$$
We can also write  $\pi^{ev}_{x_i} f = \frac{1}{2} \left(  f(x_1,\ldots, x_i, \ldots, x_r) +  f(x_1,\ldots, -x_i, \ldots, x_r) \right)$.

\begin{lem} The elements $\e_f(y_0, y_1, y_2, y_3, y_4)$ are \emph{uneven}:
\begin{equation} \label{efuneven}
\pi_{y_0}^{ev} \pi_{y_1}^{ev} \pi_{y_2}^{ev} \pi_{y_3}^{ev} \pi_{y_4}^{ev} (\e_f )=0\ .
\end{equation}
\end{lem} 
\begin{proof} The term of the form $(y_0-y_1) f_0 (y_2-y_3,y_4-y_3)$ in 
definition \eqref{defnef}  is obviously uneven since it is linear in $y_0,y_1$. 
The term 
$ f_1(y_4-y_3, y_2-y_1)$
is likewise uneven because $f_1(x,y)$ is odd in $x$ and $y$.
The fact that $\e_f$ is uneven follows by cyclic symmetry.
\end{proof}
We shall see later  in \S \ref{sectEnumodd} that the property  of being uneven is motivic, i.e., is stable under  the Ihara bracket, and is related to the totally odd zeta values.
We conjecture that  a solution to the  linearised double shuffle equations  is uneven if and only if it is in the Lie ideal  of $\gdsh$ generated by exceptional elements.

\subsection{Sparsity}

\begin{lem}The elements $\e_f(y_0,y_1,y_2,y_3,y_4)$ are \emph{sparse}:
\begin{equation} \label{efsparse}
{\partial^5 \over \partial y_0 \partial y_1 \partial y_2 \partial y_3 \partial y_4} (\e_f )=0\ .
\end{equation}
In other words,  $\e_f$ is a linear combination of monomials which only depends on four out of five of the variables $y_0,\ldots, y_4$. 
\end{lem} 

\begin{proof} Immediate from the definition.
\end{proof}

One can show that this property is also motivic, i.e., forms an ideal under  the Ihara bracket.   Call an element $f\in \p_r$ sparse if it is 
annihilated by  $${\partial^{r+1} \over \partial y_0 \ldots \partial y_r}\ .$$

\begin{prop} The sparse elements in $\bigoplus_{r} \p_r$  form a Lie  ideal. 
\end{prop} 

\begin{proof}
Let $f \in \p_r$ and $g\in \p_s$ where $f$ is sparse. The Ihara bracket $\{f,g\}$ is given by  \eqref{dihedbracket}  and consists of a cyclic sum over  terms of the form 
$$\left( f(y_0,\ldots, y_r)  -  f(y_{r+s}, y_0, \ldots, y_{r-1})\right) g(y_r,\ldots, y_{r+s}) \ ,$$
to which we apply the product of $ \frac{\partial}{\partial y_i}$ for $0\leq i \leq r+s$. Using the chain rule with respect to $\frac{\partial}{\partial y_r}$ and 
$\frac{\partial}{\partial y_{r+s}}$
and invoking
the  sparsity of $f$, we obtain 
\begin{equation} \label{inproofsparse}  \left(  \prod_{i=0}^{r-1}   \frac{\partial}{\partial y_i}\right)  \left(  f(y_0,\ldots, y_r)  -  f(y_{r+s}, y_0, \ldots, y_{r-1})\right)  \times  \left(  \prod_{i=r}^{r+s}   \frac{\partial}{\partial y_i}  \right)  g(y_r,\ldots, y_{r+s})\ .
\end{equation}
By cyclic symmetry of $f$ we have 
$$  f(y_{r+s}, y_0, \ldots, y_{r-1}) =f(y_0,\ldots, y_{r-1}, y_{r+s})\ .$$
Using the sparsity of $f$  once more  we find that
 $$\left(  \prod_{i=0}^{r-1}   \frac{\partial}{\partial y_i}\right)    f(y_0,\ldots, y_r)   = \left(  \prod_{i=0}^{r-1}   \frac{\partial}{\partial y_i}\right)  f( y_0, \ldots, y_{r-1},y_{r+s}) $$
since the left-hand side is a polynomial which is  annihilated  by $\frac{\partial}{\partial y_r}$,  and hence does not depend on $y_r$. In particular, it equals the right-hand side.  It follows that the left-hand factor of 
\eqref{inproofsparse} vanishes, which completes the proof. 
\end{proof}

In fact, there are  other differential equations satisfied by the $\e_f$ and one can use these equations to define 
various filtrations on the Lie algebras $\p$ and $\gdsh$. It would be interesting to try to prove that the degree in the exceptional elements defines a grading 
on the Lie subalgebra of $\overline{\p}$ spanned by the $x_1^{2n}$ and the $\overline{\e}_f$, as predicted by the conjectures below. 

\begin{rem}The  properties of unevenness and sparsity  imply  that almost all the coefficients of $\e_f$ are zero. This  implies that the freeness of the motivic Lie algebra (theorem \ref{DI})   hangs by a thread (see, for example,  equation \eqref{144equation}).  

 If we define the \emph{interior} of a polynomial $p\in \Q[x_1,\ldots, x_r]$ to be
$p^{o} = \pi^{\geq 2}_1 \ldots \pi^{\geq 2}_r p$, then the majority of the non-trivial monomials in $\overline{\e}_f$ are determined by:
\begin{eqnarray} \label{efint}
(\overline{\e}_f)^{o} = f_1(x_4-x_3,x_2-x_1)^{o}
\end{eqnarray}
which follows easily from the definition \eqref{efexplicit}.
\end{rem} 
\subsection{Other properties} 
We mention briefly some directions for further investigation. The proofs of the following facts are trivial yet sometimes lengthy applications of the definitions, basic properties of period polynomials, and the definition of $\{., .\}$.  

 Suppose that $f^{(1)}, \ldots, f^{(n)}$ are period polynomials, and let $f^{(i)}_1$ be as defined  in \S \ref{sectdefnex}. Then, generalizing $(\ref{efat00})$,  we have
\begin{equation}
 \pi^0_{n+2}\pi^0_{n+3} \ldots\pi^0_{4n-1} \pi^0_{4n} \big(\overline{\e}_{f^{(1)}} \circ (\overline{\e}_{f^{(2)}} \circ \ldots (\overline{\e}_{f^{(n-1)}} \circ \overline{\e}_{f^{(n)}}) \cdots )\big) = \prod_{i=1}^n f_1^{(i)}(x_i,x_{i+1}) 
\end{equation}

Unfortunately, some information about the polynomials $f_i$ is lost in this equation,  but  one can   do better  by using the operators $\pi_i^{ev} = \pi_{x_i}^{ev}$. For example,  one checks that:
\begin{equation} \label{quadfactor}
\pi^{ev}_1 \pi^{ev}_5 \pi^0_6 \pi^0_7 \pi^0_8\,  (\overline{\e}_{f} \circ  \overline{\e}_g )=   \big(\pi^{ev}_1 \pi_4^0 \overline{\e}_f (x_1,x_2,x_3,x_4)\big)\times \big(  \pi^{ev}_5 \pi_6^0 \overline{\e}_g(x_3,x_4,x_5,x_6) \big)
 \end{equation}
 factorizes.
Applying the operator $\pi^2_3$ to this equation gives
$$ \big( \pi^1_3  \pi^{ev}_1\pi_4^0 \overline{\e}_f (x_1,x_2,x_3,x_4)\big)\times \big(  \pi^1_3 \pi^{ev}_5 \pi_6^0 \overline{\e}_g(x_3,x_4,x_5,x_6) \big)Ê\in \Q[x_1,x_2] \otimes_{\Q} \Q[x_4,x_5] $$ 
and  causes the variables to separate. Next, one  checks that
$$  \pi^1_3  \pi^{ev}_1\pi_4^0 \overline{\e}_f (x_1,x_2,x_3,x_4)  =  \pi^{ev}_1 ( \alpha (x_2 -x_1)^{\deg f} + f_0(x_1,x_1+x_2))$$
for some $\alpha \in \Q$, and it is easy to show that the right-hand side of the previous equation is non-zero and  uniquely determines the period polynomial $f$ (using the fact that the involutions $(x_1,x_2)\mapsto (x_1,x_2-2x_1)$ and $(x_1,x_2)\mapsto (-x_1,x_2)$ generate an infinite group).
Putting these facts together shows:
\begin{cor} 
There are no non-trivial  relations between  commutators 
$\{\overline{\e}_f, \overline{\e}_g\}$. 
\end{cor} 
 Since similar factorization properties as  $(\ref{quadfactor})$ hold in higher depths, one might hope to prove,  in a similar manner, that the Lie subalgebra of $\gdsh$ 
generated by the exceptional elements $\e_f$ is free.

\section{Lie algebra structure and Broadhurst-Kreimer conjecture} \label{sectinterp}

\subsection{Interpretation of the Broadhurst-Kreimer  conjecture}
In the light of  the Broadhurst-Kreimer conjecture on the dimensions of the space of multiple zeta values graded by depth $(\ref{introBK})$, and Zagier's conjecture
which states that the regularised double shuffle relations generate all relations between multiple zeta values, 
 it is natural to  rephrase their conjectures, tentatively, in the Lie algebra setting as follows:
 \begin{conj}  \label{conjmain} (Strong Broadhurst-Kreimer and Zagier conjecture)
\begin{eqnarray} \label{homologyconj}
H_1(\gdsh, \Q) & \cong & \gdsh_1 \oplus \e(\Ss)  \\
H_2(\gdsh, \Q) & \cong & \Ss \nonumber \\
H_i(\gdsh, \Q) & =  & 0  \quad \hbox{for all} \quad  i \geq 3\ .\nonumber 
\end{eqnarray}
\end{conj}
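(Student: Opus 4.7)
The plan is to bound $\gdsh$ from below by a candidate bigraded Lie algebra built from the generators constructed earlier and then to match its homology against $\gdsh$. Let $\mathfrak{h} \subseteq \gdsh$ denote the bigraded Lie subalgebra generated by $\{\overline{\sigma}_{2i+1} = x_1^{2i}\}_{i \geq 1}$ and $\{\e_f : f \in \Ss\}$, and let $\mathfrak{f}$ be the free bigraded Lie algebra on these same abstract generators. Let $\mathfrak{r} \subset \mathfrak{f}$ be the ideal generated by lifts of the depth-two kernel elements furnished by $(\ref{depth2seq})$, which is canonically identified with $\Ss$. The conjecture $(\ref{homologyconj})$ decomposes into three sub-statements: (i) the tautological surjection $\mathfrak{f}/\mathfrak{r} \twoheadrightarrow \mathfrak{h}$ is an isomorphism; (ii) the presentation $\mathfrak{f}/\mathfrak{r}$ admits no syzygies beyond the evident quadratic relations, so $H_{\geq 3}(\mathfrak{f}/\mathfrak{r}) = 0$; and (iii) $\mathfrak{h} = \gdsh$.

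For (i), the strategy is to exploit the polynomial representation $\overline{\rho}$ to decouple the two families of generators. The subalgebra $\godd$ is already pinned down in low depth by $(\ref{depth2seq})$ and $(\ref{depth3seq})$, leaving only brackets involving exceptional elements to control. Here the factorization identity $(\ref{quadfactor})$ and its higher iterations yield projections recovering each period-polynomial factor $f^{(i)}$ from an iterated bracket $\{\e_{f^{(1)}}, \{\e_{f^{(2)}}, \ldots\}\}$, showing that the Lie subalgebra generated by the $\e_f$ is free. Since $(\ref{efuneven})$ and $(\ref{efsparse})$ both cut out ideals under the Ihara bracket, one can further stratify $\mathfrak{h}$ by the number of exceptional factors occurring in a bracket monomial, thereby decoupling mixed $\godd$--$\e$ brackets from pure ones and ruling out unexpected linear dependencies.

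Statement (ii) reduces to a Koszul-type check on the explicit Chevalley--Eilenberg complex: one must verify that every cubic Jacobi syzygy between a depth-two relation and a generator is already satisfied, and iterate. The Euler-characteristic identity for the resulting Hilbert series is forced to reproduce the denominator $1 - \mathbb{O}(s)t + \mathbb{S}(s)t^2 - \mathbb{S}(s)t^4$ of $(\ref{introBK})$, in which the antisymmetric $t^4$ contribution corresponds precisely to the exceptional generators $\e(\Ss)$. The matching of $\mathbb{S}(s)$ in both the $t^2$ (relations) and $t^4$ (generators) slots is an instance of the same dimension coincidence underlying the construction of $\e$ itself.

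The principal obstacle is step (iii): showing that every solution of the linearized double shuffle equations $(\ref{introlinrel})$ lies in $\mathfrak{h}$. Classifying such solutions is a depth-graded analogue of Zagier's conjecture and appears to be of equivalent difficulty. A conditional route is to grant both the Broadhurst--Kreimer conjecture $(\ref{introBK})$ and the IKZ identification $\gd = \gdsh$ from the remark following $(\ref{LStoDSH})$; then the bigraded dimensions of $\gdsh$ are known, and together with (i)--(ii) a dimension count forces $\mathfrak{h} = \gdsh$. An unconditional proof would require an independent upper bound on $\dim D_{N,d}$ -- perhaps arising from the motivic fundamental groupoid of $\mathfrak{M}_{1,n}$ indicated at the end of the introduction -- which is currently out of reach.
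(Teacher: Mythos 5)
The statement you set out to prove is labelled as a conjecture in the paper, and the paper offers no proof of it: it is explicitly described there as ``the strongest possible conjecture that one could make'', implying nearly all remaining open problems in the field, with numerical evidence ``not sufficient to remove all reasonable doubt''. So there is no proof in the paper to compare yours against, and your text, read carefully, is a proof strategy rather than a proof --- you concede yourself that step (iii) is ``currently out of reach''. Your decomposition into (i)--(iii) does track the paper's own reductions: the proposition in \S\ref{sectinterp} shows the conjecture is equivalent to conjecture \ref{conjsecond} together with $\gd=\gdsh$, and the remark following its corollary reduces conjecture \ref{conjsecond} to the motivicity of the $\e_f$ plus the statement that the Lie subalgebra generated by the $x_1^{2n}$ and the $\e_f$ has the stated homology. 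But none of your three steps is actually established.

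Concretely: in (i), the freeness of the Lie subalgebra generated by the $\e_f$ is not proved in the paper --- the factorization $(\ref{quadfactor})$ only rules out relations among the \emph{quadratic} commutators $\{\overline{\e}_f,\overline{\e}_g\}$, and the paper merely says one ``might hope to prove'' freeness from higher-order analogues; moreover your stratification by the number of exceptional factors does not control mixed brackets of $x_1^{2n}$'s and $\e_f$'s, which is precisely where unexpected relations would live (the unevenness and sparseness properties cut out ideals, which constrains but does not determine such brackets). In (ii), an Euler-characteristic identity for Hilbert series can never ``force'' the vanishing of $H_{\geq 3}$: formula $(\ref{Xformula})$ only determines the alternating sum $\mathcal{X}_{H_{\bullet}}(-1,s,t)$, so extra homology in degrees $3$ and $4$ could cancel; this is exactly why the paper derives $(\ref{Xformfinal})$ \emph{assuming} the homology, not conversely. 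And the conditional route you offer for (iii) assumes the Broadhurst--Kreimer conjecture $(\ref{introBK})$, which is one of the statements the present conjecture is designed to imply (via the corollary in \S\ref{sectEnum}), so that route is essentially circular. In short, your proposal is a sensible reduction, broadly consistent with the paper's own discussion of the problem, but it is not a proof, and the paper does not claim that one exists.
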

This  conjecture is the strongest possible conjecture that one could make: as we shall see below, it implies nearly 
all the remaining open  problems in the field. The numerical evidence for this conjecture is substantial \cite{DataMine}, but not sufficient to remove all reasonable doubt.
More conservatively, and without reference to the double shuffle relations, one could make  a weaker reformulation of the Broadhurst-Kreimer conjecture:
\begin{conj}  \label{conjsecond} (Motivic version of the Broadhurst-Kreimer conjecture). The exceptional elements
$\e_f$ are motivic (i.e., $\e(\Ss) \subset \gd$), and 
\begin{eqnarray} \label{homologyconj2}
H_1(\gd, \Q) & \cong & \gd_1 \oplus \e(\Ss)  \\
H_2(\gd, \Q) & \cong & \Ss \nonumber \\
H_i(\gd, \Q) & =  & 0  \quad \hbox{for all} \quad  i \geq 3\ .\nonumber 
\end{eqnarray}
\end{conj}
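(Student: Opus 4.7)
The plan is to split the conjecture into two largely independent parts: showing that the exceptional elements $\e_f$ are motivic (i.e., lie in $\gd \subset \gdsh$), and computing the homology of the Lie subalgebra $\mathfrak{h} \subset \gdsh$ generated by the $\overline{\sigma}_{2i+1}$ and $\e(\Ss)$. Once both are in hand, motivic-ness gives $\mathfrak{h} \subseteq \gd$; the computed homology of $\mathfrak{h}$, combined with the implication worked out in \S \ref{sectEnum}, determines the bigraded Poincar\'e series of $\U \mathfrak{h}$ to be the Broadhurst--Kreimer series $(\ref{introBK})$; specialising to $t = 1$ matches $(\ref{introHdim})$, which by Theorem \ref{DI} and $(\ref{DHoisDAtensZeta2})$ is the total Poincar\'e series of $\gr^{\dd} \gmzv$ tensored with $\Q[\zetam_{\dd}(2)]$, forcing equality of bigraded dimensions and hence $\mathfrak{h} = \gd$.

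For motivic-ness, I would follow the strategy of Example \ref{exEmot}: it suffices to realise each $\overline{\e}_f$, up to a non-zero rational scalar, as the image under the spectral sequence differential $d : \Ss \to (\gd_4)^{ab}$ of $(\ref{introdmap})$ applied to the corresponding period-polynomial relation $\xi \in \Ss$. Choose a system of lifts $\widetilde{\sigma}_{2n+1} \in \gmzv$ of the canonical generators, for example the duals of the Hoffman elements $\zetam(3,2,\ldots,2)$ suggested in a footnote of the introduction, and decompose each as $\sum_k \sigma^{(k)}_{2n+1}$ by $\dd$-degree. The components $\sigma^{(2)}_{2n+1}$ and $\sigma^{(3)}_{2n+1}$ can in principle be extracted from the Ihara coaction $(\ref{mainformula})$ applied to the chosen basis of $\Ho$; substituting into $d\xi = \sum \lambda_{ij}\bigl(\{\sigma^{(1)}_i, \sigma^{(3)}_j\} + \{\sigma^{(2)}_i, \sigma^{(2)}_j\} + \{\sigma^{(3)}_i, \sigma^{(1)}_j\}\bigr)$ and comparing with $\overline{\e}_{f(\xi)}$ should yield an identity of the form $d\xi = c_{2n} \, \e_{f(\xi)}$ with $c_{2n}$ involving the numerator of $\zeta(2n)\pi^{-2n}$, extending the pattern of $(\ref{144equation})$ and the congruences of Example \ref{exEmot}.

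For the homological part, I would first observe that $H_2(\mathfrak{h})$ contains $\Ss$ via the depth-2 exact sequence $(\ref{depth2seq})$, and then use the factorisation properties of the $\e_f$ to show that this exhausts $H_2$ and that $H_i(\mathfrak{h}, \Q) = 0$ for $i \geq 3$. Specifically, the identity $(\ref{quadfactor})$ together with its natural higher-order analogues, combined with the injectivity of $f \mapsto \e_f$ coming from $(\ref{efat00})$, should imply freeness of the Lie subalgebra generated by the $\e_f$ alone, as outlined at the end of \S \ref{sectExcep}; the sparsity and unevenness properties $(\ref{efsparse})$, $(\ref{efuneven})$ then constrain mixed iterated brackets involving both the $\overline{\sigma}_{2i+1}$ and the $\e_f$ tightly enough to rule out further relations. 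The principal obstacle is the arithmetic content of the motivic-ness step: the denominators of the $\sigma^{(3)}_{2n+1}$ grow wildly with $n$, and the appearance of numerators of Bernoulli numbers suggests that a clean proof requires Galois-cohomological input of the type investigated in \cite{Sha, MS}, or possibly a geometric construction via $\mathfrak{M}_{1,n}$ along the lines sketched at the end of the introduction.
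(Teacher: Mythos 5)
The statement you are asked to prove is a \emph{conjecture}: the paper offers no proof of it, only partial evidence (verification of motivic-ness of the $\e_f$ up to weight $30$ in the examples of \S\ref{sectspectral}, and low-weight/low-depth dimension checks). Your opening reduction is sound and agrees with the paper's own remarks following conjecture \ref{introconjMBK} and at the end of \S\ref{sectinterp}: it does suffice to show that the $\e_f$ are motivic and that the Lie subalgebra $\mathfrak{h}$ generated by the $\overline{\sigma}_{2i+1}$ and the $\e_f$ has the stated homology, since the enumeration argument of \S\ref{sectEnum} together with theorem \ref{DI} then forces $\mathfrak{h}=\gd$. But both remaining steps in your proposal are precisely the open problems, and the phrases carrying the argument (``should yield an identity'', ``should imply freeness'', ``constrain \ldots tightly enough to rule out further relations'') are not backed by any mechanism.

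Concretely: for motivic-ness, your plan requires proving $d\xi = c\,\e_{f(\xi)}$ modulo commutators and depth $\geq 5$ with $c\neq 0$, \emph{for all weights}. The paper explicitly states that the depth-three components $\sigma^{(3)}_{2n+1}$ of any choice of lifts are non-canonical, have denominators with increasingly large prime factors, and that no consistent choice of lifts is known for which the identity holds exactly beyond weight $20$; the relation between $d$ and $\e$ involves an automorphism of $\Ss$ whose eigenvalues are numerators of Bernoulli numbers, and whose non-vanishing is tied to open questions in Galois cohomology (\cite{Sha}, \cite{MS}). Nothing in your outline produces a uniform argument, and a weight-by-weight computation cannot close this. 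For the homological part, the paper proves only that there are no relations among the \emph{pairwise} brackets $\{\overline{\e}_f,\overline{\e}_g\}$ via $(\ref{quadfactor})$, and says merely that ``one might hope to prove'' freeness of the subalgebra generated by the $\e_f$ using higher-order analogues; those analogues are not established. Even granting that freeness, computing $H_2(\mathfrak{h})$ and $H_{\geq 3}(\mathfrak{h})$ requires excluding \emph{all} mixed relations between iterated brackets of the $\overline{\sigma}_{2i+1}$ and the $\e_f$ beyond those generated by $\Ss$ in depth $2$ — exactly the point at which the paper warns the conjecture could fail, in weights and depths beyond computational reach. Sparsity $(\ref{efsparse})$ and unevenness $(\ref{efuneven})$ give ideals and filtrations but no known bound on the space of relations. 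So your proposal is a reasonable research programme, consistent with the paper's own suggested lines of attack, but it is not a proof, and no proof exists in the paper.
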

Since the conjectural generators  are totally explicit, it is possible
to verify  the independence of Lie brackets in the reduced polynomial representation $\overline{\rho}(\gmzv)$ simply by computing the coefficients of a  small number  of monomials. 
In this way, it should be possible to verify $(\ref{homologyconj2})$   to  much higher  weights and depths than is presently known. Note that the Broadhurst-Kreimer conjecture could fail
if there existed non-trivial relations between commutators involving several exceptional elements $\e_f$. These would necessarily have weight and depth far beyond the range of present computations.

\subsection{Enumeration of dimensions} \label{sectEnum} Let $\g$ be a  Lie algebra over a field $k$, and let  $\U \g$ be its universal enveloping algebra.  The homology groups $H_i(\g;k)$  of $\g$ are defined to be the homology of the following complex
\begin{equation} \label{CE2}
\cdots\To \Lambda^3 \g\To   \Lambda^2 \g \To   \g  \To 0
\end{equation}
Suppose that $\g$ is bigraded, and finite-dimensional in each bigraded piece. Then $\Lambda^i \g, \U \g$  inherit a bigrading too. For any bigraded $\g$-module $M$, which is finite dimensional in every bidegree,   define its Poincar\'e-Hilbert series by
$$ \mathcal{X}_{M} (s,t) = \sum_{m,n\geq 0}Ê \dim_k (M_{m,n})  s^m t^n\ .$$
Similarly, for a family of such modules $M^l$, $l\geq 0$, let us write:  
$$ \mathcal{X}_{M^{\bullet}} (r,s,t) = \sum_{l,m,n\geq 0}Ê \dim_k (M^l_{m,n}) r^ls^m t^n\ .$$
The following proposition follows from standard homological algebra. 
\begin{prop} \label{propdimUgHg} With the above assumptions, the Poincar\'e series of $\U \g$ and the homology of $\g$ are related by the equation:
\begin{equation} \label{Xformula} 
\mathcal{X}_{\U \g} (s,t) = {1 \over \mathcal{X}_{H_{\bullet}(\g)}(-1,s,t) } \ .
\end{equation}
\end{prop} 
\begin{proof} 
Recall that the Chevalley-Eilenberg complex (\cite{W}, \S7.7): 
\begin{equation} \label{CE}
\cdots \To \U \g \otimes_k \Lambda^2 \g \To \U \g \otimes_k   \g \To \U \g \overset{\varepsilon}{\To} k \To 0
\end{equation}
is  exact in all degrees, and hence  defines  a resolution of $k$.  Let $\varepsilon: \U\g \rightarrow k$ be the augmentation map.  Viewing $k$ as a $\U \g$-module via $\varepsilon$, and  applying the functor $M\mapsto  k\otimes_{\U \g} M $ to $(\ref{CE})$ gives the complex \eqref{CE2}. 

Writing $\Lambda^1 \g = \g$, and $\Lambda^0 \g =k$, the exactness of   $(\ref{CE})$ yields
$$1= \sum_{l\geq 0} (-1)^l  \mathcal{X}_{\U \g}(s,t) \mathcal{X}_{\Lambda^l \g}(s,t) =  \mathcal{X}_{\U \g}(s,t) \mathcal{X}_{\Lambda^{\bullet} \g}(-1,s,t) \ .   $$
Since  $(\ref{CE2})$ computes the homology of $\g$, we deduce that $$\mathcal{X}_{\Lambda^{\bullet} \g}(-1,s,t) =\mathcal{X}_{H_{\bullet}( \g)}(-1,s,t)$$ which
 implies the formula \eqref{Xformula}.
\end{proof}
\subsection{Corollaries of conjectures \ref{conjmain} and \ref{conjsecond}}
Let us first apply  proposition \ref{propdimUgHg} to the algebra $\gdsh$, bigraded by weight and depth. 
\begin{lem} Conjecture  \ref{conjmain} implies that
\begin{equation}  \label{Xformfinal}
\mathcal{X}_{\U \gdsh}(s,t) =  { 1 \over 1-  \mathbb{O}(s)\, t +  \mathbb{S}(s) \, t^2 - \mathbb{S}(s) \, t^4 } \ .
\end{equation}
If we identify  $\gdsh_d$ via the isomorphism $\overline{\rho}$ with the space of  polynomials $\dD_d$ satisfying the linearized double shuffle relations, 
 we obtain the   conjecture   stated in (\cite{IKZ}, appendix).
\end{lem} 

\begin{proof} 
Assuming conjecture \ref{conjmain},  we have:
\begin{eqnarray}
\mathcal{X}_{H_1(\gdsh)}(s,t) & = & \mathbb{O}(s)\, t + \mathbb{S}(s) \, t^4 \nonumber \\ 
 \mathcal{X}_{H_2(\gdsh)} (s,t) & = & \mathbb{S}(s) \, t^2\ , \nonumber 
\end{eqnarray}
where $\mathbb{O}$ and $\mathbb{S}$ were defined in $(\ref{EOSdef})$. Apply  $(\ref{Xformula})$ to conclude. 
\end{proof}

\begin{prop} Conjecture \ref{conjmain} is equivalent to:
$$\hbox{Conjecture } \ref{conjsecond}  \quad \hbox{ together with } \quad  \gd =  \gdsh\ .$$
\end{prop}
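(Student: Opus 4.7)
The equivalence essentially follows from assembling two implications, both of which become short once the dimension calculation preceding the statement is in place.

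The direction that Conjecture \ref{conjsecond} together with $\gd = \gdsh$ implies Conjecture \ref{conjmain} is immediate: under the identification $\gd = \gdsh$, the statements $(\ref{homologyconj})$ and $(\ref{homologyconj2})$ coincide, and the containment $\e(\Ss)\subset\gd$ automatically becomes $\e(\Ss)\subset\gdsh$. So there is nothing to prove in this direction beyond unwinding definitions.

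For the harder direction, I would take as given the dimension argument already set out just before the proposition: applying the Chevalley-Eilenberg formula $(\ref{Xformula})$ to $\gdsh$ under Conjecture \ref{conjmain} produces the Poincar\'e series $(\ref{Xformfinal})$; specializing at $t=1$ collapses this to $1/(1-\mathbb{O}(s))$, which by theorem \ref{DI} together with the Poincar\'e-Birkhoff-Witt isomorphism $\gr_{\dd}\U\gmzv\cong \U\gd$ also computes $\sum_{d\geq 0}\dim_{\Q}(\U\gd)_{N,d}$. Combined with the bigraded inequality $\dim_{\Q}(\U\gd)_{N,d}\leq\dim_{\Q}(\U\gdsh)_{N,d}$ coming from $\gd\subseteq\gdsh$, the non-negativity of each term forces equality in every bidegree, hence $\U\gd=\U\gdsh$, hence $\gd = \gdsh$. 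Once this equality is established, Conjecture \ref{conjsecond} is literally Conjecture \ref{conjmain} restated for the same Lie algebra: the homological conditions $(\ref{homologyconj})$ transfer to $(\ref{homologyconj2})$, and $\e(\Ss)\subset\gdsh=\gd$ gives the motivicity claim.

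There is no genuine obstacle here; the only subtle point worth emphasizing in the write-up is that one cannot deduce $\gd=\gdsh$ from a single Poincar\'e series specialization without first invoking PBW to translate between the enveloping algebras and the Lie algebras, and without the bigraded inclusion $\gd\subseteq\gdsh$ to turn an equality of totals into an equality of summands. Both of these ingredients are already in hand, so the proof reduces to a clean chain of implications: Conjecture \ref{conjmain} $\Rightarrow$ $\gd=\gdsh$ (via the dimension count) $\Rightarrow$ Conjecture \ref{conjsecond}, and conversely Conjecture \ref{conjsecond} $+$ $\gd=\gdsh$ $\Rightarrow$ Conjecture \ref{conjmain} by direct substitution.
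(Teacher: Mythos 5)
Your proposal is correct and follows the same route as the paper: the substance is the dimension count (Chevalley--Eilenberg series for $\gdsh$ under Conjecture \ref{conjmain}, specialization at $t=1$, comparison with theorem \ref{DI} via PBW, and non-negativity forcing equality in each bidegree) to deduce $\gd=\gdsh$, after which both directions of the equivalence are formal. The paper presents exactly this argument in the paragraphs preceding the proposition and then states the equivalence as ``evident,'' so your write-up matches it in all essentials.
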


\begin{proof} 
The inclusion $\gd \subset \gdsh$ implies that for all weights $N$ and depths $d$, 
\begin{equation} \label{dimltdim} \dim_{\Q} (\U \gmzv)_{N,d} \leq \dim_{\Q} (\U \gdsh)_{N,d}\ .\end{equation}
This  uses the fact that $\gr_{\dd} \U \gmzv \cong \U \gd$, 
which follows from the Poincar\'e-Birkoff-Witt theorem.
Now we know from  theorem $\ref{DI}$  that 
$${1 \over 1- \mathbb{O}(s)} = \sum_{N\geq 0}  \big( \sum_{d\geq 0} \dim_{\Q}  (\U \gmzv)_{N,d} \big) s^N\ . $$
If conjecture   \ref{conjmain} holds, then    
specializing $(\ref{Xformfinal})$ to $t=1$ we obtain 
$${1 \over 1- \mathbb{O}(s)} = \sum_{N\geq 0}  \big( \sum_{d\geq 0} \dim_{\Q}  (\U \gdsh)_{N,d} \big) s^N \ ,$$
and therefore for all $N$, 
$$ \sum_{d\geq 0} \dim_{\Q}  (\U \gmzv)_{N,d}  = \sum_{d\geq 0} \dim_{\Q}  (\U \gdsh)_{N,d}  \ .$$
Since the dimensions in $(\ref{dimltdim})$ are non-negative, this implies equality in $(\ref{dimltdim})$, and
so  $\gr_{\dd} \U \gmzv \cong \U \gd = \U \gdsh$ and hence   $\gd = \gdsh$.  Replacing $\gd$ with $\gdsh$ in conjecture  \ref{conjmain}  gives the statement of conjecture \ref{conjsecond}, and proves the first direction of the implication. The converse is obvious. 
\end{proof} 

The conjecture $\gd=\gdsh$  implies that $\gmzv =  \DMD_0(\Q)$, which is  the statement that all relations between motivic multiple zeta values are generated by the  regularized double shuffle relations (which is equivalent
to a conjecture of Zagier's).
By Furusho's theorem \cite{F},  it would in turn  imply  Drinfeld's conjecture that all relations between  (motivic) multiple zeta values are generated by the associator relations.

\begin{cor} Conjecture \ref{conjsecond} implies a Broadhurst-Kreimer conjecture for motivic multiple zeta values. More precisely,  conjecture \ref{conjsecond} implies that
\begin{equation} \label{motBK}
\sum_{N, d\geq 0} (\dim_{\Q} \gr^{\dd}_d \Ho_{N}) \,  s^N t^d = { 1  + \mathbb{E}(s) t \over 1- \mathbb{O}(s) t + \mathbb{S}(s) t^2 - \mathbb{S}(s) t^4}\ ,
\end{equation}
where $ \Ho_{N}$ is the $\Q$-vector space generated by motivic multiple zeta values of weight $N$. 
\end{cor}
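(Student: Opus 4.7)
The plan is to assemble the formula by applying the Chevalley–Eilenberg computation of \S\ref{sectEnum} to the bigraded Lie algebra $\gd$, then convert the resulting Hilbert series for $\U \gd$ into one for $\gr^{\dd} \Ho$ via duality and the tensor decomposition $(\ref{DHoisDAtensZeta2})$.

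First I would compute the Poincaré–Hilbert series of the homology predicted by conjecture \ref{conjsecond}, bigraded by weight $N$ (carried by $s$) and depth $d$ (carried by $t$). Since $\gd_1 = \bigoplus_{i\geq 1} \Q\,\overline{\sigma}_{2i+1}$ has one generator in every odd weight $2i+1\geq 3$ and depth $1$, its series is $\mathbb{O}(s)\,t$. Since $\e(\Ss_{2k})$ contributes in weight $2k$ and depth $4$, its series is $\mathbb{S}(s)\,t^4$ by $(\ref{cuspgf})$. Finally $H_2(\gd,\Q)\cong \Ss$ lives in depth $2$, contributing $\mathbb{S}(s)\,t^2$. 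Assuming conjecture \ref{conjsecond} so that $H_i(\gd,\Q) = 0$ for $i \geq 3$, formula $(\ref{Xformula})$ gives
\[
\mathcal{X}_{\U \gd}(s,t) \;=\; \frac{1}{1 - \mathbb{O}(s)\,t + \mathbb{S}(s)\,t^2 - \mathbb{S}(s)\,t^4}.
\]

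Next I would identify $\mathcal{X}_{\U \gd}(s,t)$ with the Hilbert series of $\gr^{\dd}\Ao$. By construction $\gmzv$ is the graded $\Q$-dual of the Lie coalgebra $\Lo$, so $\Ao$ is the graded-commutative Hopf algebra dual to the cocommutative Hopf algebra $\U\gmzv$. The depth filtration is motivic and is carried across this duality, hence the Poincaré–Birkhoff–Witt theorem gives
\[
\gr^{\dd}\Ao \;\cong\; (\gr^{\dd}\U\gmzv)^{\vee} \;\cong\; (\U \gd)^{\vee}
\]
as bigraded vector spaces. Since $\gd$ is finite-dimensional in each bidegree, this yields $\mathcal{X}_{\gr^{\dd}\Ao}(s,t) = \mathcal{X}_{\U \gd}(s,t)$.

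Finally I would apply $(\ref{DHoisDAtensZeta2})$, which gives
\[
\gr^{\dd}\Ho \;\cong\; \gr^{\dd}\Ao\, \otimes_{\Q}\, \Q[\zetam_{\dd}(2)].
\]
The subtle point to check carefully is the bigrading on $\Q[\zetam_{\dd}(2)]$: although $\zetam_{\dd}(2)^n$ has weight $2n$, all of its non-trivial powers lie in depth $1$ (because $\zeta(2n)$ is a rational multiple of $\zeta(2)^n$ in depth $1$), so its Hilbert series is
\[
1 + \sum_{n\geq 1} s^{2n}\, t \;=\; 1 + \frac{s^{2}}{1-s^{2}}\, t \;=\; 1 + \mathbb{E}(s)\, t.
\]
Multiplying the two series then yields $(\ref{motBK})$. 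The main obstacle is the bookkeeping in the middle step—making sure that PBW really does intertwine the depth filtration on $\U\gmzv$ with the filtration on $\Ao$ dually, so that the associated graded passes through the duality cleanly; once that is in place the rest is a routine rational-function calculation.
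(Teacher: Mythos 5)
Your proposal is correct and follows essentially the same route as the paper: apply the Chevalley--Eilenberg Euler characteristic formula $(\ref{Xformula})$ to $\gd$ with the homology predicted by conjecture \ref{conjsecond} to obtain the denominator, identify $\gr^{\dd}\Ao$ with $(\gr_{\dd}\U\gmzv)^{\vee}$, and extract the numerator $1+\mathbb{E}(s)t$ from the decomposition $(\ref{DHoisDAtensZeta2})$ using the fact that all powers $\zetam_{\dd}(2)^n$ lie in depth $1$. The paper's proof is just a terser version of the same argument.
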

\begin{proof} Apply proposition \ref{propdimUgHg} to $\gd$. Then conjecture \ref{conjsecond} implies via $(\ref{Xformula})$ that
\begin{equation} \label{dimgmzv}  \sum_{N,d\geq 0} (\dim_{\Q}  \gr_{\dd}^d \U \gmzv_{N}) s^N t^d={ 1 \over 1-  \mathbb{O}(s)\, t +  \mathbb{S}(s) \, t^2 - \mathbb{S}(s) \, t^4 }  \ .\end{equation}
Equation $(\ref{motBK})$ follows from $(\ref{DHoisDAtensZeta2})$, which  implies that 
 $$\gr^{\dd} \Ho \cong \left(\gr^{\dd}  \Ao\right) \otimes_{\Q}   \left( \gr^{\dd} \Q[\zetam(2)] \right)  \ . $$
The statement follows from  $\gr^{\dd} \Ao \cong   (\gr_{\dd}\U \gmzv)^{\vee}$, and the fact that   
$1  + \mathbb{E}(s) t$  is
the Poincar\'e series for the bigraded algeba $$ \gr^{\dd} \Q[\zetam(2)] = \Q \oplus \bigoplus_{n\geq1} \zetam_{\dd}(2n)
\Q\  $$ 
using the motivic version of Euler's theorem: $\Q \zetam(2n) =  \Q \zetam(2)^n$   \cite{BrMTZ}.
\end{proof}

\section{Totally odd  multiple zeta values} \label{secttotallyodd} 

Let $\Ho^{odd}\subset \Ho$ denote the vector subspace generated by the elements
\begin{equation}\label{zetaodds}
\zetam (2n_1+1,\ldots, 2n_r+1) \ , 
\end{equation}
where $n_1,\ldots, n_r$ are integers $\geq 1$. Then 
$\gr^{\dd}Ê\Ho^{odd} \subset  \gr^{\dd} \Ho$ is the vector subspace spanned by the depth-graded versions  
$\zetam_{\dd}(2n_1+1,\ldots, 2n_r+1) $
of $(\ref{zetaodds})$.   It is clear from the linearized stuffle product formula that 
$\gr^{\dd} \Ho^{odd}$ is an algebra, indeed, it is a quotient of the shuffle algebra
$$\big( \Q\langle \3,\5, \7, \ldots, \rangle, \sha \big)$$
with exactly one generator $\mathsf{2n+1}$ in each degree $2n+1$, for $n\geq 1$. 
Let $\Ao^{odd}$ denote  $\Ho^{odd}$ modulo the  ideal generated by  $\zetam(2)$, and let 
$\gr^{\dd} \Ao^{odd}$ denote  $\gr^{\dd} \Ho^{odd}$ modulo  the ideal generated by $\zetam_{\dd}(2)$.
\begin{prop}  The space  $\Ho^{odd}$ is almost stable under the motivic coaction:
$$\Delta^{\Mot} ({\dd}_r \Ho^{odd})  \subseteq  \Ao\otimes {\dd}_r \Ho^{odd}  + \Ao \otimes {\dd}_{r-2} \Ho  \ .$$
Furthermore, the  group $\UMT$ acts trivially on the associated graded $\gr^{\dd} \Ho^{odd}$.
\end{prop}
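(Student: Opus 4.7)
The plan is to apply formula $(\ref{mainformula})$ to a typical generator $h = \Imot(0;\, 1\, 0^{2m_1}\, 1\, 0^{2m_2} \cdots 1\, 0^{2m_s};\, 1) \in \dd_s \Ho^{odd}$ (with $s \leq r$) and to analyze the right factor of each operator $D_\ell$ by a case split on $j$, the number of letters $e^1$ in the removed subword. Since the motivic coaction factors through the Ihara coaction and $\Amt$ is cogenerated in odd degrees $\geq 3$, it is enough to treat $D_\ell$ for odd $\ell \geq 3$. If $j = 0$ then the subword is $0^\ell$ and the left factor equals $\pm \pi(\Imot(0;0^\ell;1))$; this vanishes because $\Imot(0;0;1) = 0$ together with shuffle regularization forces $\ell!\,\Imot(0;0^\ell;1) = 0$. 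If $j \geq 2$, the right factor has depth $s - j \leq s - 2 \leq r - 2$, so it lies in $\dd_{r-2}\Ho$ automatically.

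The essential case is $j = 1$. The subword has the form $0^a\,1\,0^b$ with $a + b = \ell - 1$, and the non-vanishing condition $a_p \neq a_{p+\ell+1}$ from $(\ref{mainformula})$ pins down its position: either the subword's $1$ is an interior $e^1$ of $h$ (say the $k$-th, with $2 \leq k \leq s-1$) straddling two neighbouring $0$-blocks, or it contains the first or the last $e^1$ of $h$, in which case the subword abuts one of the endpoints $a_0 = 0$ or $a_{n+1} = 1$. A direct parametrization in the generic interior configuration gives $b = 2m_k$ and $a = \ell - 1 - 2m_k$, and the right factor is obtained from $h$ by \emph{merging} the adjacent arguments $2m_{k-1} + 1$ and $2m_k + 1$ into the single entry
\[ 2m_{k-1} + 2m_k - \ell + 2 \;=\; 2(m_{k-1} + m_k - i) + 1, \qquad \ell = 2i+1, \]
while in the boundary sub-cases the right factor is obtained by \emph{dropping} the first or the last argument of $h$. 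In every instance, the parity of $\ell$ (odd) guarantees that the resulting MZV is still totally odd, now of depth $s - 1$, so the right factor lies in $\dd_{s-1}\Ho^{odd} \subseteq \dd_r \Ho^{odd}$.

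Combining the three cases, the right factor of each $D_\ell$ lies in $\dd_r \Ho^{odd} + \dd_{r-2}\Ho$. This subspace is itself stable under every $D_\ell$ (apply the same analysis to each summand), so the property propagates from the Ihara coaction to the full motivic coaction $\Delta^{\Mot}$, which gives the first assertion. For the second assertion, observe that in every \emph{non-vanishing} contribution above the right factor has depth at most $s - 1$, strictly less than $s$; hence $\Delta^{\Mot}(h) - 1 \otimes h$ lies in $\Amt_{>0} \otimes \dd_{s-1}\Ho$, which means every $g \in \UMT$ fixes the class $[h] \in \gr^{\dd}_s \Ho^{odd}$. The main technical burden is merely the bookkeeping needed to verify the $j = 1$ parity formula uniformly across its interior and boundary sub-cases; each reduces to the observation that $a = \ell - 1 - 2m_k$ is even because $\ell$ is odd.
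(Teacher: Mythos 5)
Your proof is correct and takes essentially the same route as the paper's: reduce to the odd-degree operators $D_{2i+1}$, split on the number $j$ of letters $e^1$ in the cut subword, kill the $j=0$ terms by shuffle regularization of $\Imot(0;0,\ldots,0;1)$, send $j\geq 2$ into $\dd_{r-2}\Ho$, and for $j=1$ use the non-vanishing condition $a_p\neq a_{p+\ell+1}$ together with the oddness of $\ell$ to see that the quotient word merges two gaps into one of even length, hence stays totally odd of depth one less. Your explicit merged-entry formula and the observation that only the (vanishing) $j=0$ terms could preserve the depth are exactly the paper's argument, written out with slightly more bookkeeping.
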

\begin{proof} By the remarks at the end of \S \ref{sectIharacoact}, it suffices to compute the infinitesimal coaction $(\ref{mainformula})$ in odd degrees only. Therefore apply the operator $D_{2s+1}$ to the element
$$\Imot(0; 1 , \underbrace{0,\ldots, 0}_{2 n_1} , 1,\underbrace{0,\ldots, 0}_{2 n_2} , \ldots, 1,  \underbrace{0,\ldots, 0}_{2 n_r} ;1)$$
We use the terminology `subsequence' to denote a term which occurs on the left hand side of \eqref{mainformula}, and `quotient sequence' to denote a term which occurs on the right.
Every subsequence with two or more $1$'s gives rise to a quotient sequence of depth $\leq r-2$. 
Every subsequence of depth exactly 1 and of odd length is either of the form:
$$\Imot(0; \underbrace{0,\ldots, 0}_{odd}, 1 , \underbrace{0,\ldots, 0}_{odd};1) \quad \hbox{ or } \quad  \Imot(1; \underbrace{0,\ldots, 0}_{odd}, 1 , \underbrace{0,\ldots, 0}_{odd};0) $$
(which cannot occur since every pair of  successive $1$'s in the original sequence are separated by an even number of $0$'s), or is of the form:
$$\Imot(0; \underbrace{0,\ldots, 0}_{even}, 1 , \underbrace{0,\ldots, 0}_{even};1) \quad \hbox{ or } \quad  \Imot(1; \underbrace{0,\ldots, 0}_{even}, 1 , \underbrace{0,\ldots, 0}_{even};0) \ .$$
In this case, the quotient sequence  has the property that every pair of successive $1$'s are separated by an even number of $0$'s, which  defines 
an element of $\Ho^{odd}$.   In the case when the subsequence has no $1$'s, the left-hand side  of  $(\ref{mainformula})$ is zero and the action is trivial, which proves the last statement.
\end{proof}
 It follows  immediately from the proposition that the  action of the graded Lie algebra $\Lie^{gr} \UMT$ on the two-step quotients ${\dd}_r \Ho^{odd}/ {\dd}_{r-2} \Ho^{odd}$ factors through its abelianization 
 $\Lie^{gr} (\UMT)^{ab}$, which has canonical generators  in every odd degree $2r+1$, for $r\geq 1$.  
 Thus for every integer $n\geq 1$, there is a well-defined derivation
$$\partial_{2n+1} :  \gr^{\dd}_r \Ho^{odd}  \To  \gr^{\dd}_{r-1} \Ho^{odd} \ ,$$
which corresponds to the action of the canonical generator  $\sigma_{2n+1} \in \Lie^{gr} (\UMT)^{ab}$.
If $m_1+\ldots +m_r = n_1+\ldots +n_r$  are integers $\geq 1$, then we obtain numbers 
$$c_{\binom{m_1 \ldots m_r}{n_1 \ldots n_r}}= \partial_{2m_1+1}\ldots  \partial_{2m_{r-1}+1}  \partial_{2m_r+1}\,   \zetam_{\dd}(2n_1+1,\ldots, 2n_r+1) \in \Z
$$
where, by duality, 
$$c_{\binom{m_1 \ldots m_r}{n_1 \ldots n_r}} = \hbox{coefficient of } x_1^{2n_1} \ldots x_r^{2n_r}\quad  \hbox{  in } \quad   x_1^{2m_r}\circb (x_2^{2m_{r-1}} \circb ( \cdots (   x_{{r-1}}^{2m_{2}}\circb  x_{{r}}^{2m_{1}}) \! \cdots \! )\ .$$ 
Recall that the  action $\circb$ is given by the formula:
\begin{eqnarray} \Q[x^2_1] \otimes_{\Q} \Q[x_1,\ldots, x_{r-1}] & \To & \Q[x_1,\ldots, x_r] \nonumber \\ 
x_1^{2n} \circb g(x_1,\ldots, x_{r-1} ) & = & \sum_{i=1}^r \big( (x_i\! - \!  x_{i-1})^{2n} - (x_i \!  -\!  x_{i+1})^{2n}\big) g(x_1,\ldots, \widehat{x_i}, \ldots, x_r)  \nonumber 
\end{eqnarray}
where $x_0=0$ and $x_{r+1}=x_r$ (i.e., the term $(x_r-x_{r+1})^{2n}$ is discarded).
Note that $\circb$ coincides with $\circ$ here by proposition  \ref{propcircscompared}  since $x_1^{2n}$ lies in the image of $\g$.
 
If $S_{N,r}$ denotes the set of compositions of an integer $N$ as a sum of $r$ positive integers, let $C_{N,r}$ denote the 
$|S_{N,r}| \times |S_{N,r}| $ square matrix whose entries are the integers 
\begin{equation} \label{CNrdef} 
(C_{N,r} )_{i,j}= c_{\binom{s_i}{s_j}}\ , \quad s_i,s_j \in S_{N,r} \ .
\end{equation} 

\subsection{Enumeration of totally odd depth-graded multiple zeta values}\label{sectEnumodd}

\begin{defn} We say that a polynomial $f \in \Q[y_0, y_1,\ldots, y_r]$ is \emph{uneven} if the coefficient of 
$y_0^{2n_0} \ldots y_r^{2n_r} $
in $f$ vanishes for all $n_0,\ldots, n_r \geq 0$.
\end{defn}
Recall from $(\ref{efuneven})$ that the exceptional elements $e_f \in \gdsh$ are uneven. The following proposition is a kind of  dual  to the previous one.
\begin{prop}  \label{propuneven} The set of uneven elements in $\gdsh$ is an ideal for the Ihara bracket.
 \end{prop}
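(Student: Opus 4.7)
The plan is a direct combinatorial computation using the explicit formula $(\ref{circformula})$ for $\circb$, combined with the parity constraint of Proposition~\ref{propparity}. Let $f \in \gdsh_r$ be uneven and let $g \in \gdsh_s$; we wish to show that $\{f, g\} = f \circb g - g \circb f$ is again uneven, i.e.\ has vanishing totally-even projection.

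First I would inspect $(\ref{circformula})$ and observe that each of the four types of summand appearing in $f \circb g$ and $g \circb f$ is a product of the form $f(\cdots) \cdot g(\cdots)$ in which exactly one argument of $f$ is shared with an argument of $g$, while the remaining $r$ arguments of $f$ appear only inside the $f$-factor. For instance, in the summand $f(y_i,\ldots,y_{i+r}) \cdot g(y_0,\ldots,y_i,y_{i+r+1},\ldots,y_{r+s})$ of $f \circb g$, the shared variable is $y_i$ and the non-shared slots of $f$ are $y_{i+1},\ldots,y_{i+r}$; the analogous check for the reversed-$f$ summand and for the two summand types in $g \circb f$ is immediate. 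For a product of monomials (one from $f$, one from $g$) to contribute to the totally-even part, every variable $y_j$ must receive an even total exponent, and in particular the exponents of $f$'s monomial at its $r$ non-shared slots must all be even.

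Now the parity constraint kicks in. By Proposition~\ref{propparity}, $f$ has even total polynomial degree, so every monomial of $f$ has an even number of odd exponents; since $f$ is uneven, this number is non-zero, hence at least $2$. But only one slot of $f$ (the shared one) is free to carry an odd exponent, so no monomial of $f$ can meet both requirements simultaneously. This forces the totally-even projection of each summand to vanish, whence $\{f,g\}$ is uneven. Combined with the theorem of \S\ref{sectLinrel} that $\gdsh$ is preserved by the Ihara bracket, this shows that the uneven elements form an ideal.

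The only point requiring care is the case check that in all four summand types, the number of non-shared slots of $f$ is exactly $r$; this is a direct inspection from $(\ref{circformula})$, but it is the only combinatorial content, since the conceptual core of the argument is simply the mismatch between ``at most one free slot of $f$'' and ``at least two odd exponents in every monomial of $f$''. The analogous proposition for $\gd$ can also be read as the dual of the previous proposition on $\Ho^{odd}$, which offers a sanity check on the statement.
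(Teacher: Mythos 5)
Your argument is correct and is essentially the paper's own proof: both reduce to the observation from $(\ref{circformula})$ that every term of $\{f,g\}$ is a product $f(y_\alpha)g(y_\beta)$ with exactly one shared variable, so that a totally even monomial in the product forces the contributing monomial of $f$ to be even in all non-shared slots, and then proposition \ref{propparity} (even total degree) forces it to be totally even, contradicting unevenness. Your ``at least two odd exponents versus at most one free slot'' phrasing is just the contrapositive of the paper's ``the coefficient is a linear combination of coefficients of totally even monomials of $f$, which all vanish.''
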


\begin{proof}  Let $f, g\in \rho(\gdsh)$ such that $f$ is uneven. It suffices to show that $\{f,g\}$ is uneven. 
By the parity result (proposition \ref{propparity}), we know that $f$ and $g$ are of even degree.   It follows from $(\ref{circformula}$) that
$\{f,g\}$ is a linear combination  of terms of the form
$$ f( y_{\alpha} ) g( y_{\beta})$$
where $\alpha, \beta$ are sets of indices with $|\alpha \cap \beta| = 1$. Since the polynomial $f$ is homogeneous of  even degree, it follows that the coefficient of 
$y_0^{2n_0} \ldots y_N^{2n_N}$ in $\{f,g\}$ is a  linear combination of  the coefficients of totally even monomials in $f$, which all vanish. In more detail, let us consider  two sets of indices with $\alpha \cap \beta = \{\gamma\}$, and the corresponding monomials which occur in $f,g$ respectively:
 $$c^f_{\alpha} \prod_{a\in \alpha} y_a^{m_a} \qquad \hbox{ and } \qquad  c^g_{\beta} \prod_{b\in \beta} y_b^{n_a} $$
 where $c^f_{\alpha}, c^g_{\beta} \in \Q$. 
 A  monomial   in $\{f,g\}$ is of the form 
$$ c^f_{\alpha}  c^g_{\beta}  \, y_{\gamma}^{m_{\gamma}+n_{\gamma}}\prod_{a \in \alpha \backslash \{\gamma\}} y_a^{m_a}  \prod_{ b \in \beta \backslash \{\gamma\}} y_b^{n_b} \ .$$
Suppose that  $m_{\gamma}+n_{\gamma}$ and all $m_a, n_b$ are even for $a \in \alpha \backslash \{\gamma\}$ and $b \in \beta \backslash \{\gamma\}$. 
If $m_{\gamma}$ is odd, then  $c^f_{\alpha}=0$ since $f$ is of even degree by depth-parity. In the case when $m_{\gamma}$ is even, $c^f_{\alpha}= 0$ since $f$ is uneven.  Therefore $\{f,g\}$ is uneven. 
\end{proof} 

\begin{cor} The Lie ideal in $\gdsh$ generated by the exceptional elements is orthogonal to $\gr^{\dd}Ê\Ao^{odd}$.
\end{cor} 
\begin{proof} The elements $\e_f$ are uneven. 
\end{proof} 

 In the light of conjecture \ref{conjsecond} it is therefore natural to expect that 
$\gr^{\dd} \Ao^{odd}$ is  dual to  $\g^{odd}$, which suggests the following:

\begin{conj} \label{conjoddzetas} (`Uneven' part of motivic Broadhurst-Kreimer conjecture)
\begin{equation}  \label{conjeqnmotoddzetas}
 \sum_{N\geq 0, d\geq 0} ( \dim_{\Q} \gr^{\dd}_d \Ao^{odd}_{N}) s^N t^d  = {1 \over 1- \mathbb{O}(s) t + \mathbb{S}(s) t^2} \ .
 \end{equation}
\end{conj}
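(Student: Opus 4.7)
The plan is conditional on conjecture \ref{conjsecond} (the motivic Broadhurst--Kreimer conjecture). I will derive $(\ref{conjeqnmotoddzetas})$ by a Chevalley--Eilenberg / Euler--Poincar\'e computation, exactly as in \S\ref{sectEnum}, applied to the Lie algebra quotient of $\gd$ by its ideal of uneven elements. The proposition immediately preceding the conjecture shows that the uneven elements form a Lie ideal $\mathcal{I} \subset \gd$, and by $(\ref{efuneven})$ this ideal contains every exceptional element $\e_f$. The orthogonality observation identifies $\mathcal{I}$ as the annihilator of $\gr^{\dd}\Ao^{odd}$ in $\gd$. Setting $\mathfrak{q} := \gd/\mathcal{I}$, one obtains (via PBW and duality) a canonical isomorphism of bigraded vector spaces $\gr^{\dd}\Ao^{odd} \cong (\U\mathfrak{q})^{\vee}$, so that the right-hand side of $(\ref{conjeqnmotoddzetas})$ equals $\mathcal{X}_{\U\mathfrak{q}}(s,t)$.

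\emph{Homology of $\mathfrak{q}$.} Under conjecture \ref{conjsecond}, $\gd$ is presented by generators $x_1^{2i}$ in depth $1$ and $\e_f$ in depth $4$, subject to the depth-$2$ modular relations $(\ref{depth2seq})$, with trivial higher homology. Since every $\e_f$ lies in $\mathcal{I}$, the quotient $\mathfrak{q}$ is generated in depth $1$ by the $x_1^{2i}$, subject to precisely the depth-$2$ modular relations of $(\ref{depth2seq})$. This gives
\begin{eqnarray*}
H_1(\mathfrak{q};\Q) & \cong & \bigoplus_{i\geq 1}\Q\, x_1^{2i}, \qquad \hbox{series } \mathbb{O}(s)\, t, \\
H_2(\mathfrak{q};\Q) & \cong & \Ss, \qquad \hbox{series } \mathbb{S}(s)\, t^2.
\end{eqnarray*}
Granting the vanishing of $H_i(\mathfrak{q};\Q)$ for $i \geq 3$, the Hilbert-series identity $(\ref{Xformula})$ yields
$$\mathcal{X}_{\U\mathfrak{q}}(s,t) \;=\; \frac{1}{1 - \mathbb{O}(s)\, t + \mathbb{S}(s)\, t^2},$$
which, combined with the dual setup above, proves $(\ref{conjeqnmotoddzetas})$.

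\emph{Main obstacle.} The entire difficulty lies in the higher-homology vanishing for $\mathfrak{q}$. It is in essence a Koszul-type statement: $\U\mathfrak{q}$ should be a Koszul quadratic algebra whose Koszul dual encodes the subspace $\Ss \subset \Lambda^2\bigl(\bigoplus_{i\geq 1}\Q\, x_1^{2i}\bigr)$. A natural approach is the Hochschild--Serre spectral sequence for the short exact sequence $0 \to \mathcal{I} \to \gd \to \mathfrak{q} \to 0$: conditionally on \ref{conjsecond}, $H_\bullet(\gd;\Q)$ is controlled by $(\ref{homologyconj2})$, and one needs to compute $H_\bullet(\mathcal{I};\Q)$ as a $\mathfrak{q}$-module, exploiting that $\mathcal{I}$ is generated over the $\mathfrak{q}$-action by the $\e_f$; convergence of the spectral sequence should then transfer the vanishing from $\gd$ to $\mathfrak{q}$. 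A more ambitious alternative is to construct a quadratic Koszul resolution of $\Q$ over $\U\mathfrak{q}$ directly. In either case, this step is essentially the odd analogue of conjecture \ref{conjsecond} itself and will require genuine new input beyond the constructions of the present paper; what one gains over \ref{conjsecond} is that $\mathfrak{q}$ admits a visibly simpler presentation (no depth-$4$ generators), so there is reason to hope $(\ref{conjeqnmotoddzetas})$ is more tractable than the full Broadhurst--Kreimer conjecture.
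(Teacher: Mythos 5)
The statement you are addressing is Conjecture \ref{conjoddzetas}: the paper does not prove it, and neither does your proposal, so what is really being compared is two pieces of supporting evidence. The paper's own justification is essentially the first half of your argument: the uneven elements form a Lie ideal, every $\e_f$ is uneven by $(\ref{efuneven})$, hence the ideal generated by the exceptional elements is orthogonal to $\gr^{\dd}\Ao^{odd}$, and this makes it ``natural to expect'' that $\gr^{\dd}\Ao^{odd}$ is dual to the enveloping algebra of $\godd$ --- at which point the paper stops, records the reformulation of the left-hand side as $\sum_{N,d}\mathrm{rank}(C_{N,d})\,s^Nt^d$ in terms of the binomial-coefficient matrices $(\ref{CNrdef})$, and reports numerical verification up to weight $30$. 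Your Chevalley--Eilenberg packaging of the expected answer as $\mathcal{X}_{\U\mathfrak{q}}(s,t)$ for $\mathfrak{q}=\gd/\mathcal{I}$ is a clean way of saying the same thing, but it does not advance the proof.

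The gaps, concretely. First, as you acknowledge, the vanishing $H_i(\mathfrak{q};\Q)=0$ for $i\geq 3$ is the entire content; it does not follow from conjecture \ref{conjsecond} via Hochschild--Serre without computing $H_\bullet(\mathcal{I};\Q)$ as a $\mathfrak{q}$-module, which is of the same order of difficulty as the original problem. Second, your presentation of $\mathfrak{q}$ silently assumes that $\mathcal{I}$ coincides with the ideal generated by the $\e_f$: conjecture \ref{conjsecond} identifies generators and relations of $\gd$, but says nothing about whether every uneven element of $\gd$ lies in the ideal generated by the exceptional ones. If $\mathcal{I}$ were strictly larger, $H_1(\mathfrak{q})$ would be a proper quotient of $\bigoplus_{i\geq 1}\Q\,x_1^{2i}$ and the bookkeeping would change; even granting $\mathcal{I}=\langle\e_f\rangle$, the presentation only exhibits $H_2(\mathfrak{q})$ as a quotient of $\Ss$, so the Euler-characteristic identity $(\ref{Xformula})$ would a priori not pin down the series. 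Third, the isomorphism $\gr^{\dd}\Ao^{odd}\cong(\U\mathfrak{q})^{\vee}$ needs an argument: $\gr^{\dd}\Ao^{odd}$ is a subalgebra of $(\U\gd)^{\vee}$, and identifying it with the full dual of the quotient by $\mathcal{I}$ is precisely the step the paper flags as expected but unproven. None of this is a criticism of the strategy, which is the natural one; but be aware that the paper deliberately leaves the statement as a conjecture, and that its suggested line of attack --- rank computations with the explicit integer matrices $C_{N,d}$ coming from the derivations $\partial_{2n+1}$ --- is elementary and quite different from the homological route you propose.
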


Since the action of the operators $\partial_{2n+1}$  on the totally odd depth-graded motivic multiple zeta values can be computed explicitly in terms of binomial coefficients, one can hope to prove a version of this conjecture by elementary  methods. 
Indeed, assuming conjecture \ref{conjsecond}, the left-hand side of   $(\ref{conjeqnmotoddzetas})$ is the generating series $$   \sum_{N\geq 0, d\geq 0} \mathrm{rank\, } (C_{N,d})\, s^N t^d\ , $$ where $C_{N,r}$ are the matrices of binomial coefficients defined in $(\ref{CNrdef})$. Therefore,   one is led to   conjecture that this generating series 
also coincides with the right-hand  side of $(\ref{conjeqnmotoddzetas})$.
  I  have  verified this  up to weight 30.

Standard transcendence conjectures for multiple zeta values would then have it that 
if  $\mathcal{Z}^{odd}_{N,d}$ denotes the space of totally odd depth-graded multiple zeta values  modulo $\zeta(2)$,  of weight $N$ and  depth $d$, then  we obtain the  new conjecture:
\begin{equation}
 \sum_{N\geq 0, d\geq 0} \left( \dim_{\Q}  \mathcal{Z}^{odd}_{N,d} \right) s^N t^d  = {1 \over 1- \mathbb{O}(s) t + \mathbb{S}(s) t^2}\ .
\end{equation}
\begin{rem} These conjectures measure the relations between totally odd (motivic) multiple zeta values  \emph{modulo all (motivic) multiple zeta values of lower depth}, not just modulo totally odd (motivic) multiple zeta values of lower depth (in other words, $\mathcal{Z}^{odd}_{N,d}$ denotes the span of the totally odd zetas in the space of depth graded multiple zeta values, and not the depth graded of the  space of totally odd multiple zeta values). 
\end{rem}

\bibliographystyle{plain}
\bibliography{main}

\end{document}